\newtheorem{theorem}{Theorem}[part]
\newtheorem{proposition}{Proposition}[part]
\newtheorem{assumption}{Assumption}[part]
\newtheorem{lemma}{Lemma}[part]
\newtheorem{corollary}{Corollary}[part]
\newtheorem{remark}{Remark}[part]
\makeatletter \@addtoreset{equation}{section}
\DeclareFontFamily{U}{mathx}{\hyphenchar\font45}
\DeclareFontShape{U}{mathx}{m}{n}{
      <5> <6> <7> <8> <9> <10>
      <10.95> <12> <14.4> <17.28> <20.74> <24.88>
      mathx10
      }{}
\DeclareSymbolFont{mathx}{U}{mathx}{m}{n}
\DeclareMathAccent{\widecheck}{0}{mathx}{"71}
\DeclareMathAccent{\wideparen}{0}{mathx}{"75}
\title{Multi-stage Euler-Maruyama methods for backward stochastic differential equations driven by continuous-time Markov chains}
\date{November 15, 2023}
\newif\ifuniqueAffiliation
\author{ \href{https://orcid.org/0009-0009-0554-4983}{\includegraphics[scale=0.06]{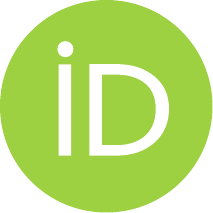}\hspace{1mm}Akihiro Kaneko}\\
	Graduate School of Engineering Science\\
	Osaka University\\
	1-3, Machikaneyama, Toyonaka, 5600531, Osaka, Japan\\
	\texttt{akihirokaneko@ymail.ne.jp}
}
\newbox{\orcid}\sbox{\orcid}{\includegraphics[scale=0.06]{orcid.pdf}} 
\author[1]{%
	\href{https://orcid.org/0000-0000-0000-0000}{\usebox{\orcid}\hspace{1mm}Akihiro Kaneko\thanks{\texttt{akihirokaneko@ymail.ne.jp}}}%
}
\affil[1]{Graduate School of Engineering Science, Osaka University, 1-3, Machikaneyama, Toyonaka, 5600531, Osaka, Japan}
\begin{document}
\maketitle

\begin{abstract}
Numerical methods for computing the solutions of Markov backward stochastic differential equations (BSDEs) driven by continuous-time Markov chains (CTMCs) are explored. The main contributions of this paper are as follows: (1) we observe that Euler-Maruyama temporal discretization methods for solving Markov BSDEs driven by CTMCs are equivalent to exponential integrators for solving the associated systems of ordinary differential equations (ODEs); (2) we introduce multi-stage Euler-Maruyama methods for effectively solving ``stiff'' Markov BSDEs driven by CTMCs; these BSDEs typically arise from the spatial discretization of Markov BSDEs driven by Brownian motion; (3) we propose a multilevel spatial discretization method on sparse grids that efficiently approximates high-dimensional Markov BSDEs driven by Brownian motion with a combination of multiple Markov BSDEs driven by CTMCs on grids with different resolutions. We also illustrate the effectiveness of the presented methods with a number of numerical experiments in which we treat nonlinear BSDEs arising from option pricing problems in finance.
\end{abstract}

\keywords{Backward stochastic differential equations, 
Continuous-time Markov chains, 
Multistage Euler-Maruyama methods,
Method of lines, 
Exponential integrators, 
Sparse grids}

\section{Introduction}
\label{sec:Introduction}
\subsection{Background and motivation}
\label{sec:BackgroundandMotivation}
Backward stochastic differential equations (BSDEs) have been intensively studied from both theoretical and application points of view: Bismut studies BSDEs related to stochastic control problems in \cite{Bismut1976,Bismut1978}, and Pardoux and Peng \cite{Pardoux1990} introduces general {\it nonlinear} BSDEs driven by Brownian motions as noise processes, typically written as
\begin{equation}
\label{eq:GeneralBMBSDE}
\mathcal Y_t = \xi + \int_t^Tf(s,\mathcal Y_s,\mathcal Z_s)ds - \int_t^T\mathcal Z_sdW_s,\quad t\in[0,T].
\end{equation}
Here, $(W_t)_{t\in [0,T]}$ is a Brownian motion, $\int_t^T\mathcal Z_sdW_s$ is It\^o's stochastic integral, and the data $(\xi,f)$, a pair of a {\it terminal condition} $\xi$ (a random variable) and a {\it driver} $f$ (a function), is given in advance. A solution of \eqref{eq:GeneralBMBSDE} means a pair $(\mathcal Y_t,\mathcal Z_t)_{t\in [0,T]}$ of adapted processes that satisfies \eqref{eq:GeneralBMBSDE}. Since the late 1990s, the study of BSDEs has been highly connected to mathematical finance. It has provided a multitude of research topics leading to the development of BSDE theory and its applications (see El Karoui et al. \cite{Karoui1997}, for example). For example, hedging derivative securities under nonlinear wealth process dynamics (e.g., different interest rates for borrowing and lending), dynamic risk measures and recursive utilities are successful applications of BSDEs. Markov BSDE, written in the form
\begin{align}
\mathcal Y_t &=\displaystyle g(\mathcal X_T)+\int_t^Tf(s,\mathcal X_s,\mathcal Y_s,\mathcal Z_s)ds-\int_t^T\mathcal Z_sdW_s,\label{eq:MarkovBSDE}\\
\mathcal X_t &=\displaystyle x_0 +\int_0^t\mu(s,\mathcal X_s)ds +\int_0^t\sigma(s,\mathcal X_s)dW_s,\label{eq:FSDE}
\end{align}
is useful in applications. Here, $\mu:[0,T]\times\mathbb R^d\to\mathbb R^d$, $\sigma:[0,T]\times\mathbb R^d\to\mathbb R^{d\times d}$, $f:[0,T]\times\mathbb R^d\times\mathbb R\times\mathbb R^d\to\mathbb R$ and $g:\mathbb R^d\to\mathbb R$ are deterministic functions, $x_0\in{\mathbb R}^d$, and process $(\mathcal X_t)_{t\in[0,T]}$ is interpreted as the state variable of a system. An important relation exists between the solution of a Markov BSDE and the solution of a second-order semilinear parabolic partial differential equation (PDE), referred to as the nonlinear Feynman-Kac formula. For details, see Theorem \ref{thm:NonlinearFC} in Section \ref{sec:BrownianBSDE}.

In this paper, we are interested in a different class of BSDEs, that is, (Markov) BSDEs driven by continuous-time Markov chains (CTMCs), written as 
\[
Y_t = X_T^\ast G 
+ \int_{]t,T]}h(X_{s-},s,Y_{s-},Z_s)ds - \int_{]t,T]}dM_s^\ast Z_s,
\]
and study numerical schemes for solving them. Here, $(X_t)_{t\in[0,T]}$ is a CTMC having a finite state space ${\mathcal I}$, $N\coloneqq\# {\mathcal I}$, $G\in\mathbb R^N$, $h:{\mathcal I}\times[0,T]\times\mathbb R\times\mathbb R^N\to\mathbb R$, and $(M_t)_{t\in[0,T]}$ is the associate martingale with $X$ (ses Section \ref{sec:CTMCBSDE} for the details). 
As for the studies of such BSDEs driven by CTMCs, we refer to \cite{Cohen2008,Cohen2010,Cohen2012,Cohen2013,Cohen2014,Djehiche2016}. In particular, a ``nonlinear Feynman-Kac type'' formula for Markov BSDEs driven by CTMCs has been derived in \cite{Cohen2012,Djehiche2016}; the solutions of the BSDEs can be represented using solutions of the associated systems of ordinary differential equations (ODEs). 

The main contributions of this paper are summerized as follows.
\begin{enumerate}
\item We observe that Euler-Maruyama temporal discritization methods 
for solving a Markov BSDE driven by a CTMC is equivalent to exponential integrators \cite{Hochbruck2010} for solving the associated system of ODEs. (See Section \ref{sec:Euler-Maruyama}.)
\item We introduce multi-stage Euler-Maruyama methods for efficiently solving ``stiff'' BSDEs driven by CTMCs. Together with a spatial discretization, they can be applied to solve BSDEs driven by Brownian motion. (See Section \ref{sec:Euler-Maruyama} and Section \ref{sec:MOL}.)
\item We propose a multilevel spatial discretization on a sparse grid for solving high-dimensional Markov BSDEs driven by Brownian motion, in which we construct a sequence of BSDEs driven by CTMCs on grids with different resolutions and suitably combine their solutions. With the help of the idea of sparse grid methods \cite{Bungartz2004,Griebel1992}, it reduces the computational cost and efficiently mitigates the curse of dimensionality. (See Section \ref{sec:SGcomb}.) The efficiency has been confirmed in numerical experiments in Section \ref{sec:numericalexperiments}.
\end{enumerate}

\subsection{Organization of the paper}
\label{sec:Organization}
The paper is organized as follows: At the end of this section, we introduce notations frequently used in the paper. In Section \ref{sec:preliminray}, we present preliminary results on BSDEs driven by CTMCs and ones driven by Brownian motion that are required for the subsequent arguments. In Section \ref{sec:Euler-Maruyama}, we construct multi-stage Euler-Maruyama methods for BSDEs driven by CTMCs and observe that they are equivalent to exponential integrators, solvers that calculate stiff systems of ODEs successfully. An application of the multi-stage Euler-Maruyama methods to BSDEs driven by Brownian motion is presented in Section \ref{sec:Application}; we present a concrete discretization and the resulting BSDE driven by a CTMC is presented in Section \ref{sec:MOL}, and a multilevel spatial discretization on a sparse grid for efficiently solving high-dimensional BSDEs driven by Brownian motion in Section \ref{sec:SGcomb}. Section \ref{sec:numericalexperiments} provides experiments highlighting the effectiveness of our schemes. Specifically, we treat option pricing problems under nonlinear wealth dynamics with several asset price process models, such as the Black-Scholes model and stochastic volatility models including the SABR model.
\subsection*{Notations}
\label{sec:notations}
For $N\in\mathbb N$, $e_i$ means the $i$-th unit vector in the Euclidean
space $\mathbb R^N$ whose $i$-th element is $1$. 
The notations $|\cdot|$ and $\|\cdot\|$ represent the absolute value and the Euclidean norm, respectively. For any matrix $Q$, $Q^\ast$ denotes the matrix transposition, $Q^+$ denotes the Moore-Penrose inverse, and $\operatorname{Tr}(Q)$ denotes the trace of $Q$. For any vector $v$, $\operatorname{diag}(v)$ is a diagonal matrix whose $i$-th diagonal element is $e_i^\ast v$. For any two vectors $v,w\in\mathbb R^N$, denote 
\[
	v\le w \iff e_i^\ast v\le e_i^\ast w,\quad i=1,\dots,N.
\]
We set as follows.
\begin{itemize}
\item $C([0,T]\times\mathbb R)$ and $C(\mathbb R)$ are the sets of $\mathbb R$-valued continuous functions defined on $[0,T]\times\mathbb R$ and $\mathbb R$, respectively. 
\item $C_b([0,T]\times\mathbb R)$ and $C_b(\mathbb R)$ are the sets of $\mathbb R$-valued bounded continuous functions defined on $[0,T]\times\mathbb R$ and $\mathbb R$, respectively.
\item $C_b^2(\mathbb R)$ is is the set of $\mathbb R$-valued, twice continuously differentiable functions $u$ such that $\partial_xu$, $\partial_{xx}u$ as well as $u$ are in $C_b(\mathbb R)$.
\item $C_b^{1,2}([0,T]\times\mathbb R)$ is the set of $\mathbb R$-valued functions $u$, which is once continuously differentiable in its first argument, twice continuously differentiable in its second, and $\partial_tu$, $\partial_xu$, $\partial_{xx}u$ as well as $u$ are in $C_b([0,T]\times\mathbb R)$.
\end{itemize}
Here, $\displaystyle\partial_tu(t,x) \coloneqq \frac{\partial u}{\partial t}(t,x)$, $\displaystyle\partial_xu(t,x) \coloneqq \frac{\partial u}{\partial x}(t,x)$ and $\displaystyle\partial_{xx}u(t,x) \coloneqq \frac{\partial^2 u}{\partial x^2}(t,x)$.
For a vector-valued c\`adl\`ag stochastic process $X_t$, $X_{t-}$ denotes the left limit and $\Delta X_t\coloneqq X_t-X_{t-}$. Additinally, if $X_t$ is a semimartingale, $\langle X,X\rangle$ denotes the predictable quadratic variation matrix.
Throughout the paper, we will work on a probability space $(\Omega,\mathcal F,\mathbb P)$ and a finite time horizon $T>0$. For $k,m\in\mathbb N$, a filtration $\mathbb F$ with the usual conditions, and a square-integrable c\`{a}dl\`{a}g $\mathbb F$-martingale $M$, we define the following spaces of stochastic processes.
\begin{itemize}
	\item $L^2(\mathbb F,\mathbb R^k)$ is the set of c\`{a}dl\`{a}g $\mathbb F$-adapted processes $\mathcal X:[0,T]\times\Omega\to\mathbb R^k$ with $\displaystyle\mathbb E\left[\int_{[0,T]}\|\mathcal X_t\|^2dt\right]<\infty$.
	\item $\mathbb S^2(\mathbb F,\mathbb R^k)$ is the set of c\`{a}dl\`{a}g $\mathbb F$-adapted processes $\mathcal Y:[0,T]\times\Omega\to\mathbb R^k$ with $\displaystyle\mathbb E\left[\sup_{0\le t\le T}|\mathcal Y_t|^2\right]<\infty$.
	\item $L^2(\langle M\rangle,\mathbb F,\mathbb R^{k\times m})$ is the set of $\mathbb F$-predictable processes $\mathcal Z:[0,T]\times\Omega\to\mathbb R^{k\times m}$ with 
	\[
		\mathbb E\left[\left|\int_{[0,T]}\mathcal Z_tdM_t\right|^2\right]=\mathbb E\left[\int_{[0,T]}\operatorname{Tr}(\mathcal Z_td\langle M,M\rangle_t\mathcal Z_t^\ast)\right]<\infty.
	\]
\end{itemize}

\section{Setups and preliminary results}
\label{sec:preliminray}
\subsection{BSDEs driven by a CTMC}
\label{sec:CTMCBSDE}
Let $X=(X_t)_{t\in[0,T]}$ be a continuous-time, finite-state Markov chain with state space $\mathcal I=\{e_1,\dots,e_N\}$, for some $N\in\mathbb N$. Suppose that $X$ is defined on the filtered probability space $(\Omega,\mathcal F,\mathbb P, \mathbb G)$ where $\mathbb G\coloneqq(\mathcal G_t)_{t\in[0,T]}$ is the completion of the filtration generated by $X$. Note that $X$ is a c\`adl\`ag pure jump process in this case. 

$X$ is associated with a family of Q-matrices; recall that $N\times N$ matrices $Q_t, t\in[0,T]$ are called Q-matrices on $\mathcal I$ if $e_i^\ast Q_te_j \ge0$ for all $i\neq j$ and $\sum_je_i^\ast Q_te_j=0$ for all $i$. We suppose Q-matrices appeared in the present paper are uniformly bounded in time $t$. Note that some literatures refer to its transpose as Q-matrix such as \cite{Elliott1995,Cohen2008}. We sometimes call $Q_t$ as the transition rate matrix for $X$. For $p_t = (\mathbb P(X_t = e_1),\dots,\mathbb P(X_t=e_N))$ and $t\ge0$, we see that it satisfies the following Kolmogorov's forward equation
\begin{equation}
	\frac{dp_t}{dt} = p_tQ_t.
\end{equation}
Hence, the transition probability matrix of $X$, given by 
\[
	\Phi(t,s)\coloneqq
	\begin{pmatrix}
		\mathbb P(X_t=e_1|X_s=e_1) & \ldots & \mathbb P(X_t=e_N|X_s=e_1)\\
		\vdots & \ddots & \vdots\\
		\mathbb P(X_t=e_1|X_s=e_N) & \ldots & \mathbb P(X_t=e_N|X_s=e_N)
	\end{pmatrix}
\]
for $t\ge s$ satisfies the following equations
\begin{align}
	\frac{d\Phi(t,s)}{dt} &= \Phi(t,s)Q_t,\quad\Phi(s,s)=I,\label{KolmogorovFwd}\\
	\frac{d\Phi(t,s)}{ds} &= -Q_s\Phi(t,s),\quad\Phi(t,t)=I,\label{KolmogorovBwd}
\end{align}
for $t\ge s\ge0$ where $I$ is the $N\times N$ identity matrix. \eqref{KolmogorovFwd} and \eqref{KolmogorovBwd} are referred to as the forward and backward Kolmogorov equation, respectively. $X$ is time-(in)homogeneous if $Q_t$ does (not) depend on $t\in[0,T]$. Note also that the transition probability of the time-homogeneous chain $X$ with a transition rate matrix $Q$ is the matrix exponential $\Phi(t,s)=\exp((t-s)Q)$. 

From Appendix B in \cite{Elliott1995}, $X$ has the following semi-martingale representation
\begin{equation}
	\label{CTMC}
  X_t = x_0 + \int_{]0,t]}Q_s^\ast X_{s-}ds + M_t.
\end{equation}
Here, $x_0\in\{e_1,\dots,e_N\}$ and $M_t$ is an $\mathbb R^N$-valued $\mathbb G$-martingale. The predictable quadratic covariation matrix of $M$ is given by
\[
	\langle M,M\rangle_t = \int_{]0,t]}(\operatorname{diag}(Q_s^\ast X_{s-})-\operatorname{diag}(X_{s-})Q_s-Q_s^\ast\operatorname{diag}(X_{s-}))ds,
\]
which is also shown in Appendix B in \cite{Elliott1995}. Let $\psi_t\coloneqq \operatorname{diag}(Q_t^\ast X_{t-})-\operatorname{diag}(X_{t-})Q_t-Q_t^\ast\operatorname{diag}(X_{t-})$. Note that $\psi_t$ is a predictable process, valued in $N\times N$ real symmetric nonnnegative semi-definite matrices. For later use, define the seminorm for $z\in\mathbb R^N$ by 
\[
	\|z\|_v^2 \coloneqq z^\ast(\operatorname{diag}(Q_t^\ast v)-\operatorname{diag}(v)Q_t-Q_t^\ast\operatorname{diag}(v))z,
\]
where $v\in\{e_1,\dots,e_N\}$. Note that $\|\cdot\|_v$ depends on $t$ when $Q_t$ is time-dependent. The following It\^o's isometry is a key property of this seminorm for $v=X_{t-}$. That is, for any $\mathbb R^N$-valued predictable process $Z$, it holds
\begin{equation}
	\label{isometry}
	\mathbb E\left[\left(\int_{]s,t]}Z_u^\ast dM_u\right)^2\right] = \mathbb E\left[\int_{]s,t]}\|Z_u\|_{X_{u-}}^2du\right]\text{ for }t>s\ge0.
\end{equation}
The proof is given in \cite{Cohen2010}. We also define the equivalence relation $Z\sim_M Z'$ on $\mathbb R^N$-valued predictable processes as $\|Z_t-Z_t'\|_{X_{t-}}=0,dt\otimes d\mathbb P$-a.s. 

In \cite{Cohen2008}, Cohen and Elliott treat BSDEs driven by a CTMC in the form of
\begin{equation}
	\label{eq:GeneralCTMCBSDE}
	Y_t = \xi+\int_{]t,T]}h(t,Y_{s-},Z_s)ds-\int_{]t,T]}dM_s^\ast Z_s,
\end{equation}
where $\xi$ is an $\mathcal G_T$-measurable square-integrable random variable, $h:\Omega\times[0,T]\times\mathbb R\times\mathbb R^N\ni(\omega,t,y,z)\mapsto h(\omega,t,y,z)\in\mathbb R$ is $\mathbb G$-predictable in $(\omega,t)$ and Borel measurable in $(y,z)$. The following result on the existence and a uniqueness of the solution $(Y,Z)\in\mathbb S^2(\mathbb G,\mathbb R)\times L^2(\mathbb G,\langle M\rangle,\mathbb R^N)$ of \eqref{eq:GeneralCTMCBSDE} has been established.
\begin{theorem}[\cite{Cohen2008}]
	\label{thm:wellposednessofCTMCBSDE}
	Assume that,
	\[
		\mathbb E\left[\int_{]0,T]}h(t,0,0)^2dt\right] < \infty,
	\]
	and that for some constant $L>0$, 
	\begin{equation}
		\label{Lipschitz}
		|h(t,y,z)-h(t,y',z')|^2\le L(|y-y'|^2+\|z-z'\|_{X_{t-}}^2),\quad dt\otimes d\mathbb P\text{-a.s.} 
	\end{equation}
	for all $y,y'\in\mathbb R$ and $z,z'\in\mathbb R^N$. Then, it admits a unique solution $(Y,Z)\in\mathbb S^2(\mathbb G,\mathbb R)\times L^2(\mathbb G,\langle M\rangle,\mathbb R^N)$. We remark that it is unique up to indistinguishability for $Y$ and up to $\sim_M$ equivalence for $Z$.
\end{theorem}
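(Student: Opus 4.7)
The plan is to establish both existence and uniqueness by a Banach fixed-point argument on the Hilbert space $\mathcal{H} := \mathbb{S}^2(\mathbb{G},\mathbb{R}) \times L^2(\mathbb{G},\langle M\rangle,\mathbb{R}^N)$, equipped with an exponentially weighted norm
\[
\|(y,z)\|_\beta^2 := \mathbb{E}\left[\int_{]0,T]} e^{\beta t}\bigl(|y_{t-}|^2 + \|z_t\|_{X_{t-}}^2\bigr)\,dt\right],
\]
which is equivalent to the canonical one for any $\beta>0$. The parameter $\beta$ will be tuned at the end to make the Picard map a strict contraction.

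First I would construct the Picard map $\Phi:\mathcal{H}\to\mathcal{H}$. For a fixed input $(y,z)\in\mathcal{H}$, define the c\`adl\`ag $\mathbb{G}$-martingale
\[
N_t := \mathbb{E}\left[\xi + \int_{]0,T]} h(s, y_{s-}, z_s)\,ds \;\middle|\; \mathcal{G}_t\right],
\]
which is square-integrable thanks to the standing hypothesis on $h(\cdot,0,0)$ and the Lipschitz bound \eqref{Lipschitz}. Because $\mathbb{G}$ is the augmented natural filtration of the CTMC $X$, the martingale representation theorem for $\mathbb{G}$-martingales (see \cite{Elliott1995,Cohen2008}) yields a predictable $Z\in L^2(\mathbb{G},\langle M\rangle,\mathbb{R}^N)$, unique up to $\sim_M$, with $N_t = N_0 + \int_{]0,t]} dM_s^\ast Z_s$. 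Setting $Y_t := N_t - \int_{]0,t]} h(s,y_{s-},z_s)\,ds$ gives the target backward representation, so $\Phi(y,z):=(Y,Z)$ is well defined.

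Second, I would derive the contraction estimate. Given two inputs $(y,z),(y',z')$ with images $(Y,Z)$ and $(Y',Z')$, write $\delta Y,\delta Z,\delta y,\delta z$ for the differences, apply It\^o's formula to $t\mapsto e^{\beta t}|\delta Y_t|^2$ on $[t,T]$, take expectations (which kills the martingale part), and use the It\^o isometry \eqref{isometry} to rewrite the $\delta Z$-contribution in the seminorm $\|\cdot\|_{X_{s-}}$. Combining the Lipschitz bound \eqref{Lipschitz} with the elementary inequality $2ab\le \alpha a^2+\alpha^{-1}b^2$ yields an estimate of the form
\[
\|\Phi(y,z)-\Phi(y',z')\|_\beta^2 \le \frac{C(L)}{\beta}\,\|(y,z)-(y',z')\|_\beta^2,
\]
with $C(L)$ depending only on $L$. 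Choosing $\beta>C(L)$ makes $\Phi$ a strict contraction, and its unique fixed point solves \eqref{eq:GeneralCTMCBSDE}. The $\mathbb{S}^2$-bound on $Y$ follows separately from Doob's maximal inequality applied to the martingale $N$.

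The main obstacle I anticipate is the jump-calculus bookkeeping around the seminorm $\|\cdot\|_{X_{t-}}$. Since $\|\cdot\|_v$ is only a seminorm on $\mathbb{R}^N$, uniqueness of $Z$ can at best be asserted up to $\sim_M$, and the quadratic-variation terms in the It\^o computation must be handled via the compensator identity $d\langle M,M\rangle_t = \psi_t\,dt$ rather than a pathwise Euclidean norm, so that the resulting bound naturally lives in $\|\cdot\|_{X_{t-}}$ and is compatible with \eqref{Lipschitz}. A secondary subtlety is that the driver depends on the left limit $Y_{s-}$, not $Y_s$, so predictability of the Picard iterates must be tracked across iterations. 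Once these are in place, the argument is the standard Pardoux--Peng scheme adapted to the finite-variation CTMC setting.
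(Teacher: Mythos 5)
The paper does not prove this statement: it is imported verbatim from Cohen--Elliott \cite{Cohen2008}, so there is no internal proof to compare against. Your proposal correctly reconstructs the argument of that cited source --- martingale representation for square-integrable $\mathbb G$-martingales of the chain, a Picard map, and a contraction in a $\beta$-weighted norm with the It\^o isometry \eqref{isometry} converting the $\delta Z$ term into the seminorm $\|\cdot\|_{X_{t-}}$ so that the Lipschitz hypothesis \eqref{Lipschitz} applies --- and you flag the right subtleties (uniqueness of $Z$ only up to $\sim_M$, the compensator identity). The only imprecision is calling $\|(y,z)\|_\beta$ equivalent to the canonical norm on $\mathbb S^2(\mathbb G,\mathbb R)\times L^2(\mathbb G,\langle M\rangle,\mathbb R^N)$: the $y$-component is an $L^2$-in-time norm, so the fixed point should be taken in $L^2\times L^2(\langle M\rangle)$ and the $\mathbb S^2$ bound recovered afterwards via Doob, exactly as you note in your last sentence.
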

\begin{remark}
	Since $\displaystyle\|z\|_{e_i}^2\le 3\max_{j,k=1,\dots,N}|e_j^\ast Q_te_k|\cdot\|z\|^2$ for $z\in\mathbb R^N$, \eqref{Lipschitz} leads to the usual Lipschitz continuity as $|h(t,y,z)-h(t,y',z')|^2\le L(|y-y'|^2+\|z-z'\|^2)$, $dt\otimes\mathbb P$-a.s. Note that, however, the converse does not hold; there does not exist $C>0$ such that $\|z\|\le C\|z\|_{e_i}$ for all $z$. Taking $z_1=(1,1,\dots,1)\in\mathbb R^N$, it is easy to see $\|z_1\|_{e_i}^2 = 0 < N = \|z_1\|^2$.
\end{remark}
Next, we consider the Markov BSDE driven by a CTMC of the form
\begin{equation}
	\label{eq:MarkovCTMCBSDE1}
	Y_t = X_T^\ast G + \int_{]t,T]}h(X_{s-},s,Y_{s-},Z_s)ds - \int_{]t,T]}dM_s^\ast Z_s,
\end{equation}
where $G\in\mathbb R^N$ and $h:\{e_1,\dots,e_N\}\times[0,T]\times\mathbb R\times\mathbb R^N\to\mathbb R$ is a Borel measurable function. Associated with \eqref{eq:MarkovCTMCBSDE1}, setting $t\in[0,T]$ as the starting time for the BSDE, we consider
\begin{equation}
	\label{eq:MarkovCTMCBSDE}
	\begin{cases}
		\displaystyle X_s^{t,e_i} = e_i + \int_{]t,s]}Q_u^\ast X_{u-}^{t,e_i}du + M_s-M_t,\quad s>t,\\
		\displaystyle X_s^{t,e_i} = e_i,\quad s\le t,\\
		\displaystyle Y_s^{t,e_i} = (X_T^{t,e_i})^\ast G + \int_{]s,T]}h(X_{u-}^{t,e_i},u,Y_{u-}^{t,e_i},Z_u^{t,e_i})du-\int_{]s,T]}dM_u^\ast Z_u^{t,e_i},\quad s\in[0,T].
	\end{cases}
\end{equation}
Then, we give the following nonlinear Feynman-Kac type result. Recall that, similar statements can be found in \cite{Cohen2012,Djehiche2016}. 
\begin{theorem}
	\label{thm:CTMCFeynmanKac}
	Assume that there exists a constant $L>0$ such that
		\begin{equation}
			\label{thmlip}
			|h(e_i,t,y,z)-h(e_i,t,y',z')|^2\le L^2(|y-y'|^2+\|z-z'\|_{e_i}^2),
		\end{equation}
	for any $y,y'\in\mathbb R$, $z,z'\in\mathbb R^N$, $t\in[0,T]$ and $i=1,\dots,N$, and $\int_0^Th(e_i,u,0,0)^2du<\infty$. Define $H:[0,T]\times\mathbb R^N\to\mathbb R^N$ such that
	\[
		e_i^\ast H(t,z) = h(e_i,t,e_i^\ast z,z)\quad\text{ for }t\in[0,T],z\in\mathbb R^N,i=1,\dots,N.
	\]
	\begin{enumerate}
		\item For a solution $U_t$ of the system of ODEs
		\begin{equation}
			\label{eq:ODE}
			\frac{dU_t}{dt} + Q_tU_t + H(t,U_t)=0,\quad U_T=G,
		\end{equation}
		$(Y_s^{t,e_i},Z_s^{t,e_i})=(U_sX_s^{t,e_i},U_s)\in\mathbb S^2(\mathbb G,\mathbb R)\times L^2(\mathbb G,\langle M\rangle,\mathbb R^N)$ uniquely solves \eqref{eq:MarkovCTMCBSDE}.
		\item Conversely, for a unique solution of \eqref{eq:MarkovCTMCBSDE} $(Y_s^{t,e_i},Z_s^{t,e_i})\in\mathbb S^2(\mathbb G,\mathbb R)\times L^2(\mathbb G,\langle M\rangle,\mathbb R^N)$, a continuous function $V_t = (Y_t^{t,e_1},\dots,Y_t^{t,e_N})^\ast$ satisfies $V\sim_MZ^{t,e_i}$ for $i=1,\dots,N$ and $t\in[0,T]$, and solves \eqref{eq:ODE}.
\end{enumerate}
\end{theorem}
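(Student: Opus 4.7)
My plan is to prove Part 1 by a direct It\^o calculation and then reduce Part 2 to Part 1 via BSDE uniqueness.

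For Part 1, the idea is to apply the integration-by-parts formula for semimartingales to the scalar process $(X_s^{t,e_i})^\ast U_s$, starting from the semimartingale representation $dX_s^{t,e_i} = Q_s^\ast X_{s-}^{t,e_i}\,ds + dM_s$ inherited from \eqref{CTMC} and from $dU_s = -Q_s U_s\,ds - H(s,U_s)\,ds$. Because $U$ is $C^1$ and deterministic, its covariation with the jump martingale $M$ vanishes, and the two drift terms containing $Q_s$ cancel since $(Q_tU_t)^\ast X_{s-}^{t,e_i} = (X_{s-}^{t,e_i})^\ast Q_tU_t$ as scalars. What remains is
\[
d\bigl((X_s^{t,e_i})^\ast U_s\bigr) = -(X_{s-}^{t,e_i})^\ast H(s,U_s)\,ds + dM_s^\ast U_s.
\]
The definition of $H$ together with $X_{s-}^{t,e_i}\in\{e_1,\dots,e_N\}$ gives $(X_{s-}^{t,e_i})^\ast H(s,U_s) = h\bigl(X_{s-}^{t,e_i},s,(X_{s-}^{t,e_i})^\ast U_s,U_s\bigr)$; integrating from $s$ to $T$ with $U_T=G$ yields precisely \eqref{eq:MarkovCTMCBSDE}. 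Integrability is immediate: continuity of $U$ on the compact interval $[0,T]$ makes it bounded, so the candidate pair $((X^{t,e_i})^\ast U,U)$ lies in $\mathbb S^2(\mathbb G,\mathbb R)\times L^2(\mathbb G,\langle M\rangle,\mathbb R^N)$. Uniqueness follows from Theorem \ref{thm:wellposednessofCTMCBSDE} once one observes that the driver $(s,y,z)\mapsto h(X_{s-}^{t,e_i},s,y,z)$ satisfies the Lipschitz hypothesis of that theorem as a pointwise consequence of \eqref{thmlip}.

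For Part 2, I would avoid any direct PDE-style or Markov-property argument for $V$ and instead bootstrap from Part 1. The preparatory step is to check that $H$ is Lipschitz continuous in the Euclidean norm: combining \eqref{thmlip} with the bound $\|z\|_{e_i}^2 \le 3\max_{j,k}|e_j^\ast Q_t e_k|\cdot\|z\|^2$ from the remark after Theorem \ref{thm:wellposednessofCTMCBSDE} gives $\|H(t,z)-H(t,z')\| \le C\|z-z'\|$ uniformly in $t$, which together with the integrability of $h(e_i,\cdot,0,0)$ yields a unique absolutely continuous solution $U:[0,T]\to\mathbb R^N$ of \eqref{eq:ODE}. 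Part 1 then supplies a BSDE solution $((X^{t,e_i})^\ast U,U)$, and by uniqueness in Theorem \ref{thm:wellposednessofCTMCBSDE} it must coincide with the given $(Y^{t,e_i},Z^{t,e_i})$ up to indistinguishability and $\sim_M$-equivalence. Evaluating at $s=t$, where $X_t^{t,e_i}=e_i$, gives $e_i^\ast V_t = Y_t^{t,e_i} = e_i^\ast U_t$ for every $i$, so $V\equiv U$. Continuity of $V$, the relation $V\sim_M Z^{t,e_i}$, and the fact that $V$ itself solves \eqref{eq:ODE} are then inherited from the corresponding properties of $U$.

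The main obstacle is conceptual rather than computational: recognizing that one need not analyze $V$ directly but can instead pin it down by BSDE uniqueness, provided the ODE \eqref{eq:ODE} is well-posed. Translating \eqref{thmlip}, which is phrased in the state-dependent seminorm $\|\cdot\|_{e_i}$, into a genuine Euclidean Lipschitz bound on $H$ is the one nontrivial ingredient, and it uses exactly the seminorm-vs-norm comparison highlighted in the remark after Theorem \ref{thm:wellposednessofCTMCBSDE}; it also ensures that the candidate $((X^{t,e_i})^\ast U,U)$ is automatically in the required spaces. After that, Part 1's It\^o calculation and a single invocation of the well-posedness theorem close both parts.
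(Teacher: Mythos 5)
Your Part 1 is essentially the paper's argument: a one-line integration by parts for $(X_s^{t,e_i})^\ast U_s$ using the semimartingale decomposition \eqref{CTMC}, cancellation of the $Q$ terms, boundedness of the continuous $U$ on $[0,T]$ for the integrability, and Theorem \ref{thm:wellposednessofCTMCBSDE} for uniqueness. Your Part 2, however, takes a genuinely different route. The paper works directly with the BSDE solution: it first proves continuity of $t\mapsto Y_t^{t,e_i}$ via an a priori estimate on $|Y_{t_1}^{t_1,e_i}-Y_{t_2}^{t_2,e_i}|$, then identifies $Z^{t,e_i}\sim_M V$ from the jump identity $(\Delta X_u)^\ast V_u=\Delta Y_u=(\Delta X_u)^\ast Z_u^{t,e_i}$ together with the fact that a predictable finite-variation martingale vanishes, and finally reads off from the conditional-expectation representation that $V$ satisfies the variation-of-constants form of \eqref{eq:ODE}. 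You instead establish well-posedness of \eqref{eq:ODE} up front (via the Euclidean Lipschitz bound on $H$ obtained from the seminorm comparison) and then pin $V$ down by BSDE uniqueness through Part 1. This is logically sound and considerably shorter, but it shifts the burden onto ODE existence theory: since $t\mapsto h(e_i,t,y,z)$ is only Borel measurable with $\int_0^T h(e_i,u,0,0)^2\,du<\infty$, you need the Carath\'eodory existence--uniqueness theorem (measurable in $t$, uniformly Lipschitz in $z$, integrable bound), not the Picard--Lindel\"of theorem that the paper's remark invokes only under an \emph{additional} continuity assumption on $t\mapsto h$; correspondingly, ``solves \eqref{eq:ODE}'' must be read in the absolutely continuous/integral sense, which is consistent with the paper's own variation-of-constants conclusion. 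The paper's approach buys a self-contained construction of the ODE solution \emph{from} the BSDE, in the probabilistic-representation spirit, with no appeal to ODE existence; yours buys brevity at the price of that external input, and you should state the Carath\'eodory step explicitly to close Part 2.
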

\begin{proof}
	See Appendix \ref{sec:ProofOfThm2.2}.
\end{proof}
\begin{remark}
	If we assume the continuity of $t\mapsto h(e_i,t,y,z)$ for all $i,y,z$ additionally, a uniqueness of \eqref{eq:ODE} immediately holds from the well-known Picard-Lindel\"of theorem (e.g. Theorem 110C, P.23 in \cite{Butcher2003}).
\end{remark}
\begin{corollary}
	Under the square integrability $t\mapsto h(e_i,t,y,z)$ in $[0,T]$ and the uniform Lipschitz continuity \eqref{thmlip}, a unique solution $(Y,Z)$ of \eqref{eq:MarkovCTMCBSDE1} is also a unique solution of 
	\begin{equation}
		\label{eq:CTMCBSDE}
		Y_t = X_T^\ast G+\int_{]t,T]}X_{s-}^\ast H(s,Z_s)ds-\int_{]t,T]}dM_s^\ast Z_s.
	\end{equation}
	Moreover, the relation 
	\[
		Y_t = X_t^\ast U_t\quad\text{up to indistinguishability and}\quad Z\sim_M U
	\]
	holds, where $U$ is a solution of \eqref{eq:ODE}.
\end{corollary}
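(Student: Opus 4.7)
The plan is to combine Theorem~\ref{thm:wellposednessofCTMCBSDE} with the ODE characterization of Theorem~\ref{thm:CTMCFeynmanKac}, so that \eqref{eq:MarkovCTMCBSDE1} and \eqref{eq:CTMCBSDE} are routed through the same ODE solution, and then to cross-check the second equation by a direct It\^o computation. Under the stated square integrability of $t\mapsto h(e_i,t,0,0)$ and the uniform Lipschitz bound \eqref{thmlip}, the driver of \eqref{eq:MarkovCTMCBSDE1} satisfies the hypotheses of Theorem~\ref{thm:wellposednessofCTMCBSDE}, yielding a unique $(Y,Z)\in\mathbb S^2(\mathbb G,\mathbb R)\times L^2(\mathbb G,\langle M\rangle,\mathbb R^N)$. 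Applying part~(2) of Theorem~\ref{thm:CTMCFeynmanKac} to the Markov family \eqref{eq:MarkovCTMCBSDE} indexed by $(t,e_i)$, I would then extract a continuous $U:[0,T]\to\mathbb R^N$ solving \eqref{eq:ODE} with $U_T=G$, together with the identifications $Y_t=X_t^\ast U_t$ up to indistinguishability and $Z\sim_M U$. This already delivers the second displayed relation in the corollary.

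Next, I would verify that the same pair $(Y,Z)=(X^\ast U,U)$ also solves \eqref{eq:CTMCBSDE}. Since $U$ is continuous and of finite variation, $[X,U]\equiv 0$, so It\^o's product formula combined with the semimartingale representation \eqref{CTMC} of $X$ and the ODE \eqref{eq:ODE} for $U$ gives
\[
d(X_t^\ast U_t)=X_{t-}^\ast dU_t+U_t^\ast dX_t=-X_{t-}^\ast H(t,U_t)\,dt+dM_t^\ast U_t,
\]
after cancellation of the two $\pm X_{t-}^\ast Q_tU_t\,dt$ terms. Integrating on $]t,T]$ and using $U_T=G$ rearranges to \eqref{eq:CTMCBSDE} pathwise for $(X^\ast U,U)$, which is the first assertion of the corollary.

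For the uniqueness statement for \eqref{eq:CTMCBSDE}, I would mirror part~(2) of Theorem~\ref{thm:CTMCFeynmanKac}: for any solution $(\tilde Y,\tilde Z)$ of the Markov version of \eqref{eq:CTMCBSDE} started from $(t,e_i)$, the continuous function $\tilde V_t\coloneqq (\tilde Y_t^{t,e_1},\dots,\tilde Y_t^{t,e_N})^\ast$ should again satisfy the ODE \eqref{eq:ODE} with terminal value $G$, using the same cancellation displayed above. ODE uniqueness then forces $\tilde V=U$, and hence $\tilde Y=X^\ast U=Y$ and $\tilde Z\sim_M U$. I expect this last step to be the main obstacle, because the driver $(t,z)\mapsto X_{t-}^\ast H(t,z)$ depends on $e_i^\ast z$ on $\{X_{t-}=e_i\}$, a quantity not controlled by $\|z\|_{e_i}$ (cf.\ the remark following Theorem~\ref{thm:wellposednessofCTMCBSDE}). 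Consequently, the $L^2$/Gronwall-type uniqueness argument of Theorem~\ref{thm:wellposednessofCTMCBSDE} does not apply to \eqref{eq:CTMCBSDE} in isolation, and one really must route uniqueness through the ODE identification rather than trying to treat \eqref{eq:CTMCBSDE} as a generic BSDE satisfying \eqref{thmlip}.
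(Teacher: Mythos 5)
Your argument is correct and matches the paper's approach: the paper states this corollary without a separate proof, treating it as an immediate consequence of Theorem \ref{thm:CTMCFeynmanKac}, and your route---extract $U$ via part (2), identify $Y_t=X_t^\ast U_t$ and $Z\sim_M U$, then verify \eqref{eq:CTMCBSDE} for $(X^\ast U,U)$ by the It\^o product formula (exactly the computation underlying part (1))---is that derivation spelled out. Your closing caveat, that uniqueness for \eqref{eq:CTMCBSDE} cannot be read off from Theorem \ref{thm:wellposednessofCTMCBSDE} because the driver $X_{s-}^\ast H(s,z)$ depends on $e_i^\ast z$, which the seminorm $\|\cdot\|_{e_i}$ does not control, and must instead be routed through the ODE identification, is a point the paper leaves implicit but is consistent with the identity \eqref{eq:A11} established in the appendix proof of Theorem \ref{thm:CTMCFeynmanKac}.
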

\subsection{BSDEs driven by a Brownian motion}\label{sec:BrownianBSDE}
Let $W=(W_t)_{t\ge0}$ be a $d$-dimensional standard Brownian motion. Let $\mathbb F=(\mathcal F_t)_{t\ge0}$ be the completion of the filtration generated by $W$. We consider the following Markov BSDE driven by Brownian motion.
\begin{equation}
	\begin{cases}
		\mathcal X_t &=x_0 +\displaystyle\int_0^t\mu(s,\mathcal X_s)ds +\int_0^t\sigma(s,\mathcal X_s)dW_s,\\
		\mathcal Y_t &= g(\mathcal X_T)+\displaystyle\int_t^Tf(s,\mathcal X_s,\mathcal Y_s,\mathcal Z_s)ds-\int_t^T\mathcal Z_s^\ast dW_s,
	\end{cases}
	\label{BMBSDE}
\end{equation}
where $\mu:[0,T]\times\mathbb R^d\to\mathbb R^d$, $\sigma:[0,T]\times\mathbb R^d\to\mathbb R^{d\times d}$, $f:[0,T]\times\mathbb R^d\times\mathbb R\times\mathbb R^d\to\mathbb R$, $g:\mathbb R^d\to\mathbb R$ are Borel measurable, and referred to as the drift coefficient, the diffusion coefficient, the driver and the terminal condition, respectively. Assuming that, there exists $L>0$ and $p\in\mathbb N$ such that
\begin{equation}
	\label{lip1}
	\begin{gathered}
		\|\mu(t,x)-\mu(t,x')\|+\|\sigma(t,x)-\sigma(t,x')\|\le L\|x-x'\|,\\
		|f(t,x,y,z)-f(t,x,y',z')| \le L(|y-y'|+\|z-z'\|),\\
		\|\mu(t,x)\| + \|\sigma(t,x)\| \le L(1+\|x\|^2)\\
		|f(t,x,y,z)|+|g(x)|\le L(1+\|x\|^p) 
	\end{gathered}
\end{equation}
for all $t\in[0,T]$, $x,x',z,z'\in\mathbb R^d$ and $y,y'\in\mathbb R$, \eqref{BMBSDE} has a unique solution $((\mathcal X_t)_{t\in[0,T]},(\mathcal Y_t)_{t\in[0,T]},(\mathcal Z_t)_{t\in[0,T]})\in \mathbb S^2(\mathbb F,\mathbb R^d)\times \mathbb S^2(\mathbb F,\mathbb R)\times L^2(\langle W\rangle,\mathbb F,\mathbb R^d)$. $\mathcal X$ is sometimes referred to as the state process, and it is solvable independently of $(\mathcal Y,\mathcal Z)$.

The nonlinear Feynman-Kac formula describes the relation between \eqref{BMBSDE} and
\begin{equation}
\label{PDE}
\begin{cases}
  \partial_tu(t,x)+\mathcal L_tu(t,x)+f(t,x,u(t,x),\sigma^\ast(x)\nabla_xu(t,x))=0,\quad (t,x)\in[0,T]\times\mathbb R^d,\\
  u(T,x)=g(x),\quad x\in\mathbb R^d.
\end{cases}
\end{equation}
Here, 
\begin{equation}
	\label{eq:infinitesimal}
	\mathcal L_tu(t,x) = \sum_{i=1}^d\mu^{(i)}(t,x)\frac{\partial u}{\partial x_i}(t,x)+\frac12\sum_{i,j=1}^d(\sigma\sigma^\ast)^{(i,j)}(t,x)\frac{\partial^2 u}{\partial x_i\partial x_j}(t,x)
\end{equation}
is the infinitesimal generator of the Markov process $\mathcal X$, $\nabla_xu(t,x) = \left(\frac{\partial u}{\partial x_1}(t,x),\dots,\frac{\partial u}{\partial x_d}(t,x)\right)^\ast\in \mathbb R^d$ is the gradient vector, $\mu^{(i)}(t,x)$ is the $i$-th component of $\mu(t,x)$, and $(\sigma\sigma^\ast)^{(i,j)}(t,x)$ is the $(i,j)$-th component of $\sigma(t,x)\sigma^\ast(t,x)$. The precise statement is as follows.
\begin{theorem}[The nonlinear Feynman-Kac formula (e.g. pp.487-489 in \cite{Cohen2015})]
	\label{thm:NonlinearFC}
	Suppose that $\mu,\sigma, f$ and $g$ are defined as above. For $(t,x)\in[0,T]\times\mathbb R^d$, let $(\mathcal X^{t,x},\mathcal Y^{t,x},\mathcal Z^{t,x})$ be a unique solution of the Markov BSDE
	\begin{equation}
		\label{MCBSDEtx}
		\begin{cases}
			\displaystyle\mathcal X_s^{t,x} =x + \int_t^s\mu(\tau,\mathcal X_\tau^{t,x})d\tau + \int_t^s\sigma(\tau,\mathcal X_\tau^{t,x})dW_\tau\quad\text{for}\quad s\ge t,\\
			\displaystyle\mathcal X_s^{t,x}=x\in\mathbb R^d \quad\text{for}\quad s\le t,\\
			\displaystyle\mathcal Y_s^{t,x} =g(\mathcal X_T^{t,x})+\int_s^Tf(\tau,\mathcal X_\tau^{t,x},\mathcal Y_\tau^{t,x},\mathcal Z_\tau^{t,x})d\tau -\int_s^T(\mathcal Z_\tau^{t,x})^\ast dW_\tau\quad\text{for}\quad s\in[0,T].
		\end{cases}
	\end{equation}
	Then, 
	\begin{enumerate}
		\item for every classical solution $u\in C^{1,2}([0,T]\times \mathbb R^d;\mathbb R)$ of \eqref{PDE}, such that, for some $K>0$,
		\begin{equation}
			\label{eq:esti}
			|u(t,x)|+\|\nabla_xu(t,x)\|\le K(1+\|x\|)\quad\text{for}\quad (t,x)\in[0,T]\times\mathbb R^d
		\end{equation}
		a unique solution of BSDE \eqref{MCBSDEtx} is represented as 
		\begin{equation}
			\label{FeynmanKac}
			\mathcal Y_s^{(t,x)} = u(s,\mathcal X_s^{(t,x)}),\quad \mathcal Z_s^{(t,x)} = \sigma^\ast(\mathcal X_s^{(t,x)})\nabla_xu(s,\mathcal X_s^{(t,x)})\quad\text{for}\quad s\ge t.
		\end{equation}
		(The inequality \eqref{eq:esti} is sufficient for showing $(\mathcal Y_s^{(t,x)},\mathcal Z_s^{(t,x)})$ is of the class $\mathbb S^2(\mathbb F,\mathbb R)\times L^2(\langle W\rangle,\mathbb F,\mathbb R^d)$.)
		\item Suppose further that $f$ and $g$ are Lipschitz continuous and uniformly continuous with respect to $x$, uniformly in $t$. Then, $u(t,x)\coloneqq \mathcal Y_t^{t,x}$ is a viscosity solution of \eqref{PDE}.
		\item Additionally, if for each $R>0$ there exists a continuous function $m_R:[0,\infty)\to[0,\infty)$ such that $m_R(0)=0$, and 
		\[
			|f(t,x,y,z)-f(t,x',y,z)|\le m_R(\|x-x'\|(1+\|z\|))
		\]
		holds for $y\in\mathbb R, x,x',z\in\mathbb R^d$ such that $\max\{\|x\|,\|x'\|,\|z\|\}<R$, then a uniqueness of $u$ also holds.
	\end{enumerate}
\end{theorem}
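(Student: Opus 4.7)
The plan is to handle the three parts in order, following the standard FBSDE machinery.

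For part (1), my approach is to apply It\^o's formula to the process $s \mapsto u(s, \mathcal X_s^{t,x})$ on $[t,T]$, which is permissible by the $C^{1,2}$ regularity. The drift produced by It\^o expansion is exactly $\partial_t u + \mathcal L_s u$, which by the PDE equals $-f(s, \mathcal X_s^{t,x}, u(s,\mathcal X_s^{t,x}), \sigma^\ast(\mathcal X_s^{t,x})\nabla_x u(s,\mathcal X_s^{t,x}))$; the martingale part is $(\sigma^\ast \nabla_x u)^\ast dW_s$. Integrating from $s$ to $T$ and using $u(T,\cdot)=g$ shows that $(u(s,\mathcal X_s^{t,x}), \sigma^\ast(\mathcal X_s^{t,x})\nabla_x u(s,\mathcal X_s^{t,x}))$ solves BSDE \eqref{MCBSDEtx}. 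The linear growth assumption \eqref{eq:esti} together with the standard moment bound $\mathbb E[\sup_{s}\|\mathcal X_s^{t,x}\|^q]<\infty$ for the SDE places this pair in $\mathbb S^2(\mathbb F,\mathbb R)\times L^2(\langle W\rangle,\mathbb F,\mathbb R^d)$, so the uniqueness statement under \eqref{lip1} identifies it with $(\mathcal Y^{t,x}, \mathcal Z^{t,x})$ and gives \eqref{FeynmanKac}.

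For part (2), I would first argue that $\mathcal Y_t^{t,x}$ is deterministic: by strong uniqueness, $\mathcal X^{t,x}$ depends only on the Brownian increments $W_{\cdot}-W_t$, so the BSDE solution is measurable with respect to $\sigma(W_s-W_t : s\ge t)$, which is independent of $\mathcal F_t$; evaluating at $s=t$ yields a $\{\emptyset,\Omega\}$-measurable random variable. Hence $u(t,x)\coloneqq \mathcal Y_t^{t,x}$ is a function, and continuity of $u$ in $(t,x)$ follows from the joint $L^2$-stability estimates for forward SDEs and for Lipschitz BSDEs. To check the viscosity subsolution property at $(t_0,x_0)$, let $\varphi\in C^{1,2}$ touch $u$ from above with equality there and suppose, for contradiction, that $\partial_t\varphi+\mathcal L_{t_0}\varphi+f(t_0,x_0,\varphi,\sigma^\ast\nabla_x\varphi)<0$ at $(t_0,x_0)$. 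By continuity this inequality persists on a small parabolic cylinder; stop $\mathcal X^{t_0,x_0}$ at the exit time $\tau$ from that cylinder, apply It\^o to $\varphi(s,\mathcal X_s^{t_0,x_0})$ up to $\tau$, and use the flow identity $\mathcal Y_\tau^{t_0,x_0}=u(\tau,\mathcal X_\tau^{t_0,x_0})\le \varphi(\tau,\mathcal X_\tau^{t_0,x_0})$. The strict BSDE comparison theorem then yields $u(t_0,x_0)<\varphi(t_0,x_0)$, contradicting the touching condition. The supersolution property is symmetric.

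For part (3), uniqueness is a purely PDE fact: the modulus-of-continuity hypothesis on $f$ with respect to $x$, together with the Lipschitz dependence in $(y,z)$, is precisely the structural condition needed to close the doubling-of-variables argument in the Crandall--Ishii--Lions comparison theorem for semilinear second-order parabolic PDEs with polynomial growth. Applying that comparison between any two viscosity solutions having the prescribed growth forces them to coincide on $[0,T]\times\mathbb R^d$.

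The step I expect to be the main obstacle is the viscosity verification in part (2). The delicate points are: (i) ensuring the stopping time $\tau$ is strictly positive and stays below $T$, so that the parabolic neighborhood argument is nontrivial; (ii) rigorously invoking the flow property $\mathcal Y_\tau^{t_0,x_0}=u(\tau,\mathcal X_\tau^{t_0,x_0})$ at a random time, which requires either a dynamic-programming-type identity for the BSDE or an approximation by deterministic times; and (iii) using the correct \emph{strict} form of BSDE comparison (with strict inequality on a set of positive measure) to obtain the contradiction. Continuity of $u$, also needed before the viscosity argument even begins, depends on uniform-in-$(t,x)$ estimates that must be established from the Lipschitz and growth hypotheses in \eqref{lip1}.
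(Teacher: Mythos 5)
The paper gives no proof of this theorem: it is stated as a standard result quoted from pp.~487--489 of \cite{Cohen2015}, whose argument is exactly the one you outline --- It\^o's formula applied to $u(s,\mathcal X_s^{t,x})$ plus $L^2$-uniqueness of the BSDE solution for part (1), the independence argument showing $\mathcal Y_t^{t,x}$ is deterministic, stability estimates for continuity, and the stopped test-function contradiction via strict BSDE comparison for part (2), and a Crandall--Ishii--Lions-type comparison principle under the $m_R$ structural condition for part (3). Your sketch is correct, the delicate points you flag (strict positivity of the exit time, the flow identity at a random time, and the strict form of the comparison theorem) are precisely where the cited proofs spend their effort, and there is nothing further in the paper to compare against beyond the citation.
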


\section{Multi-stage Euler-Maruyama methods}
\label{sec:Euler-Maruyama}
In this section, we introduce sevaral (multi-stage) Euler-Maruyama methods for solving BSDEs driven by CTMCs. Hereafter, we always assume that $(X_t)_{t\in[0,T]}$ is time-homogeneous, namely, the transition rate matrix $Q_t$ equals some constant matrix $Q$ for all $t$. The transition expectation is then represented as the action of a matrix exponential on the present state $X_t$. This can be seen from
\begin{equation}
	\mathbb E[X_s|X_t] = \sum_{i=1}^Ne_i\mathbb P(X_s=e_i|X_t) = \begin{pmatrix}\mathbb P(X_s=e_1|X_t)\\\vdots\\\mathbb P(X_s=e_N|X_t)\end{pmatrix} = e^{(s-t)Q}X_t,
	\label{eq:markovcondexp}
\end{equation}
for all $t\le s\le T$. 

Euler-Maruyama methods are constructed in the following two steps:
\begin{enumerate}
	\item Slice the time interval $[0,T]$ into a temporal grid $\{0=t_0<t_1<\cdots<t_M=T\}$ and derive a stochastic difference equation on the grid.
	\item Take conditional expectations and suitably approximate the (Riemann) integral part that appeared.
\end{enumerate}
Let $(Y,Z)=(X^\ast Z,Z)$ be a unique solution of a BSDE driven by a CTMC \eqref{eq:CTMCBSDE}. Discretize $[0,T]$ on a uniform grid $t_m = m\Delta t$ for $m=0,1,\dots,M$, where $\Delta t = T/M$. We immediately see that $(Y_{t_m})_{m=0}^M$ satisfies the following stochastic difference equation
\begin{equation}
	\label{eq:DifferenceEq}
	Y_{t_m} = Y_{t_{m+1}} + \int_{]t_m,t_{m+1}]}X_{s-}^\ast H(s,Z_s)ds - \int_{]t_m,t_{m+1}]}dM_s^\ast Z_s,
\end{equation}
for $m=0,1,\dots,M-1$. Taking the conditional expectations $\mathbb E[\cdots|X_{t_m}]$ in both-hand sides of \eqref{eq:DifferenceEq}, we observe that
\begin{equation}
	Y_{t_m} = \mathbb E[X_{t_{m+1}}|X_{t_m}]^\ast Z_{t_{m+1}} + \int_{t_m}^{t_{m+1}}\mathbb E[X_s|X_{t_m}]^\ast H(s,Z_s)ds. \label{eq:condErep}
\end{equation}
We can consider the following Euler-Maruyama-type approximations from \eqref{eq:condErep}: 

\paragraph{(The Lawson-Euler method).} 
The simplest would be
\begin{equation}
	Y_{t_m}\approx \mathbb E[X_{t_{m+1}}|X_{t_m}]^\ast Z_{t_{m+1}} +\Delta t\mathbb E[X_{t_{m+1}}|X_{t_m}]^\ast H(t_{m+1},Z_{t_{m+1}}). \label{eq:LawsonEM}
\end{equation}
From \eqref{eq:markovcondexp}, \eqref{eq:LawsonEM} is reduced to 
\begin{equation}
	Y_{t_m}\approx X_{t_m}^\ast\left(e^{\Delta tQ}Z_{t_{m+1}}+\Delta te^{\Delta tQ}H(t_{m+1},Z_{t_{m+1}})\right).\label{eq:LawsonEM1}
\end{equation}
As a consequence, we have the following 1-stage Euler-Maruyama scheme.
\begin{equation}
	\begin{cases}
		\displaystyle Z_{t_M}^M \coloneqq G,\\
		\displaystyle Z_{t_m}^M \coloneqq e^{\Delta tQ}(Z_{t_{m+1}}^M+\Delta tH(t_{m+1},Z_{t_{m+1}}^M)),\quad m=0,1\dots,M-1,
	\end{cases}
	\label{eq:LawsonEuler}
\end{equation}
which is known as the Lawson-Euler method \cite{Lawson1967} for solving system of ODEs \eqref{eq:ODE}. Note that we take $Y_{t_m}^M \coloneqq X_{t_m}^\ast Z_{t_m}^M$ for $m=0,1\dots,M$. 
\paragraph{(The N\o rsett-Euler method).} 
We can consider another (1-stage) Euler-Maruyama approximation as
\begin{equation}
	Y_{t_m}\approx \mathbb E[X_{t_{m+1}}|X_{t_m}]^\ast Z_{t_{m+1}} +\left(\int_{t_m}^{t_{m+1}}\mathbb E[X_s|X_{t_m}]^\ast ds\right) H(t_{m+1},Z_{t_{m+1}}). \label{eq:NorsettEM}
\end{equation}
From \eqref{eq:markovcondexp} as before, \eqref{eq:NorsettEM} is reduced
\begin{equation}
	Y_{t_m}\approx X_{t_m}^\ast\left(e^{\Delta tQ}Z_{t_{m+1}}+\Delta t\left(\int_0^1e^{(1-\theta)\Delta tQ}d\theta\right)H(t_{m+1},Z_{t_{m+1}})\right), \label{eq:NorsettEM1}
\end{equation}
and we have
\begin{equation}
	\begin{cases}
		\displaystyle Z_{t_M}^M \coloneqq G,\\
		\displaystyle Z_{t_m}^M \coloneqq e^{\Delta tQ}Z_{t_{m+1}}^M+\Delta t\left(\int_0^1e^{(1-\theta)\Delta tQ}d\theta\right)H(t_{m+1},Z_{t_{m+1}}^M),\quad m=0,1\dots,M-1.
	\end{cases}
	\label{eq:NorsettEuler}
\end{equation}
It is known as the N\o rsett-Euler method \cite{Cox2002} for solving system of ODEs \eqref{eq:ODE}.

\paragraph{(The ETD2RK method).}
We can consider 2-stage Euler-Maruyama methods, as well as 1-stage ones. For example, a 2-stage N\o rsett-Euler-Maruyama method is described as 
\begin{align*}
	Y_{t_m} &\approx \begin{multlined}[t]
		\mathbb E[X_{t_{m+1}}|X_{t_m}]^\ast Z_{t_{m+1}} +\int_{t_m}^{t_{m+1}}\mathbb E[X_s|X_{t_m}]^\ast \\
		\left(H(t_{m+1},Z_{t_{m+1}})+\frac{H(t_m,Z_{t_m})-H(t_{m+1},Z_{t_{m+1}})}{\Delta t}(t_{m+1}-s)\right)ds
		\end{multlined}\\
	&\approx \begin{multlined}[t]
		\mathbb E[X_{t_{m+1}}|X_{t_m}]^\ast Z_{t_{m+1}} +\left(\int_{t_m}^{t_{m+1}}\mathbb E[X_s|X_{t_m}]^\ast ds\right) H(t_{m+1},Z_{t_{m+1}})\\
		+\left(\int_{t_m}^{t_{m+1}}(t_{m+1}-s)\mathbb E[X_s|X_{t_m}]^\ast ds\right)\frac{H(t_m,\zeta_m)-H(t_{m+1},Z_{t_{m+1}})}{\Delta t},
		\end{multlined}
\end{align*}
where $\zeta_m$ is defined by 
\[
	\zeta_m = \mathbb E[X_{t_{m+1}}|X_{t_m}]^\ast Z_{t_{m+1}} +\left(\int_{t_m}^{t_{m+1}}\mathbb E[X_s|X_{t_m}]^\ast ds\right) H(t_{m+1},Z_{t_{m+1}}).
\]
From \eqref{eq:markovcondexp}, it results in
\begin{equation}
	\begin{cases}
		\displaystyle Z_{t_M}^M \coloneqq G,\\
		\displaystyle Z_{t_m}^M \coloneqq 
		\begin{multlined}[t]
			e^{\Delta tQ}Z_{t_{m+1}}^M+\Delta t\left(\int_0^1e^{(1-\theta)\Delta tQ}d\theta-\int_0^1e^{(1-\theta)\Delta tQ}\theta d\theta\right)H(t_{m+1},Z_{t_{m+1}}^M)\\
			+\Delta t\left(\int_0^1e^{(1-\theta)\Delta tQ}\theta d\theta\right)H(t_{m+1},\zeta_m^M)
		\end{multlined}\\
		\displaystyle \zeta_m^M \coloneqq e^{\Delta tQ}Z_{t_{m+1}}^M+\Delta t\left(\int_0^1e^{(1-\theta)\Delta tQ}d\theta\right)H(t_{m+1},Z_{t_{m+1}}^M),\quad m=0,1\dots,M-1,
	\end{cases}
\end{equation}
which is known as the second-order exponential time differencing Runge-Kutta (ETD2RK) method \cite{Cox2002} for solving system of ODEs \eqref{eq:ODE}.
\paragraph{(General $s$-stage exponential integrators).}
Furthermore, we consider general $s$-stage Euler-Maruyama methods taking the form of
\begin{equation}
	\label{eq:expRK}
	\begin{gathered}
		Z_{t_m}^M = \chi_0(\Delta tQ)Z_{t_{m+1}}^M+\Delta t\sum_{i=1}^sb_i(\Delta tQ)G_{mi},\\
		G_{mi} = H(t_{m+1}-c_i\Delta t,\zeta_{ni}^M),\quad\text{for}\quad i=1,\dots,s,\\
		\zeta_{mi}^M = \chi_i(kQ)Z_{m+1}^M+\Delta t\sum_{j=1}^sa_{ij}(\Delta tQ)G_{nj},\quad\text{for}\quad i=1,\dots,s,
	\end{gathered}
\end{equation}
for $m=0,\dots,M-1$. Here, $Z_{t_m}^M$ approximates $Z_{t_m}$, $U_{mi}$ is the $i$-th internal stage, $s\in\mathbb N$ is the number of stages, $c_i$ are real numbers, and $\chi_i$, $a_{ij}$ and $b_i$ are functions constructed from ``$\phi$-functions'' defined by 
\[
	\phi_l(Q)=\int_0^1e^{(1-\theta)Q}\frac{\theta^{l-1}}{(l-1)!}d\theta\quad\text{for}\quad l\in\mathbb N,\quad\text{and}\quad \phi_0(Q) = e^{Q}.
\]
For example, the three methods mentioned above are obtained from the settings:
\begin{itemize}
	\item Lawson-Euler: $s=1$, $\chi_0(z)=e^z$, $\chi_1(z)=1$, $a_{11}(z)=0$, $b_1(z)=e^z$ and $c_1=0$.
	\item N\o rsett-Euler: $s=1$, $\chi_0(z)=e^z$, $\chi_1(z)=1$, $a_{11}(z)=0$, $b_1(z)=\phi_1(z)$ and $c_1=0$.
	\item ETD2RK: $s=2$, $\chi_0(z)=e^z$, $\chi_1(z)=1$, $\chi_2(z)=e^z$, $a_{11}(z)=a_{12}(z)=a_{22}(z)=0$, $a_{21}(z)=\phi_1(z)$, $b_1(z)=\phi_1(z)-\phi_2(z)$, $b_2(z)=\phi_2(z)$, $c_1=0$ and $c_2=1$.
\end{itemize}
Note that the multi-stage Euler-Maruyama method \eqref{eq:expRK} is the same as exponential integrators (exponential Runge-Kutta methods) for solving systems of ODEs \eqref{eq:ODE}. For details on exponential integrators, we refer to \cite{Hochbruck2010}, a comprehensive survey. We remark on the following:
\begin{remark}
	\label{rem:stiff}
	Exponential integrators work to calculate solutions for stiff systems of ODEs. A system of ODEs is called ``stiff'' if it raises numerical instability to explicit ODE solvers; solvers require exceedingly small step sizes to solve such equations. As described in \cite{Higham1993,Wanner1996,Trefethen1996,Trefethen2005,Strang2007}, method-of-lines approach (spatial variable discretization) for solving parabolic PDEs often results in large stiff systems of ODEs. Exponential integrators are applicable to these systems effectively.
\end{remark}
\begin{remark}
	$\chi_i$, $a_{ij}$, $b_i$ and $c_i$ in \eqref{eq:expRK} are prescribed parameters to be set so that we can obtain various schemes that have (stiff / nonstiff) orders of convergence.
\end{remark}
\begin{remark}
	\label{rem:krylov}
	Exponential integrators exploits matrix functions $\phi_l$. However, evaluating them numerically is not straightforward and has been studied in numerical literature. A standard approach that is widely used is a combination of Pad\`e approximations and scaling-and-squaring methods. Although it enables efficient evaluation, note that it is only applicable to $\phi_l$ of moderate dimension. For solving ODE systems whose dimension is large, it is advantageous to apply Krylov subspace methods; instead of evaluating $\phi_l$ itself, its action on a state vector is approximated with a vector on a Krylov subspace whose dimension is small.
\end{remark}

\section{Application to BSDEs driven by Brownian motion}
\label{sec:Application}
In this section, we are interested in computing Markov BSDEs driven by Brownian motion \eqref{BMBSDE} with an appropriate spatial discretization. From a probabilistic point of view, it can be seen as approximating a BSDE driven by a Brownian motion with a BSDE driven by a CTMC. From a differential equation point of view, on the other hand, it can be seen as the method of lines, approximating a second-order parabolic PDE with a system of ODEs. As mentioned in remark \ref{rem:stiff}, the method of lines discretization of parabolic PDEs leads to stiff systems of ODE, and our multi-stage Euler-Maruyama methods efficiently work for them. One can represent this situation as Figure \ref{fig:diagram}.

\begin{figure}[htbp]
  \centering
  \resizebox{.9\textwidth}{!}{\setlength{\unitlength}{.05cm}
  \begin{picture}(290,50)
  \put(30,10){\makebox(0,0){Parabolic PDE}}
  \put(30,60){\makebox(0,0){Brownian BSDE}}
	\put(180,60){\makebox(0,0){CTMC BSDE}}
  \put(180,10){\makebox(0,0){ODE system}}
  \put(290,40){\makebox(0,0){Exponential}}
  \put(290,30){\makebox(0,0){Integrator}}
  \put(80,10){\vector(1,0){50}}
  \put(105,0){\makebox(0,0){Method of lines}}
  \put(80,60){\vector(1,0){50}}
  \put(105,50){\makebox(0,0){Spatial discretization}}
  \put(30,15){\vector(0,1){40}}
  \put(30,55){\vector(0,-1){40}}
  \put(0,40){\makebox(0,0){Nonlinear}}
  \put(0,30){\makebox(0,0){Feynman-Kac}}
  \put(180,15){\vector(0,1){40}}
  \put(180,55){\vector(0,-1){40}}
  \put(145,35){\makebox(0,0){Cohen-Szpruch\cite{Cohen2012}}}
  \put(215,60){\vector(3,-1){45}}
  \put(215,10){\vector(3,1){45}}
  \put(260,15){\makebox(0,0){Variation}}
  \put(260,5){\makebox(0,0){-of-constants}}
  \put(275,58){\makebox(0,0){Euler-Maruyama($\ast$)}}
  \end{picture}
  }
  \caption{A diagram representing the relation between key ingredients for the argument up to here on the present paper. It is based on the diagram given in \cite{Cohen2012}. The arrow denoted as $(\ast)$ has been newly drawn by the arguments in section \ref{sec:Euler-Maruyama}.}
  \label{fig:diagram}
\end{figure}
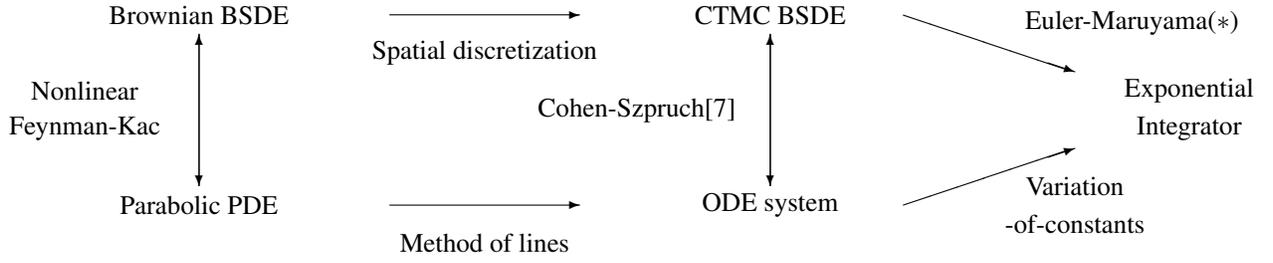

In Section \ref{sec:MOL}, we present a construction of Markov BSDEs driven by CTMCs from spatial discretization of Markov BSDEs driven by Brownian motion. Through the method of lines discretization to the associated second-order parabolic PDEs, systems of ODEs are obtained. We see that the systems of ODEs are equivalent to Markov BSDEs driven by CTMCs. In section \ref{sec:SGcomb}, we propose a numerical scheme for solving high-dimensional Markov BSDEs driven by Brownian motion, that incorporates the Euler-Maruyama methods with the sparse grid combination technique. 

Hereafter, we focus on \eqref{BMBSDE} such that $(\mathcal X_t)_{t\in[0,T]}$ is time homogeneous, namely, $\mu(t,x)$ and $\sigma(t,x)$ do not depend on $t$. Then, we can simply write $\mu(t,x)=\mu(x)$ and $\sigma(t,x)=\sigma(x)$, and the subscript of the infinitesimal generator can also be omitted: $\mathcal L_t = \mathcal L$.

\subsection{Method-of-lines spatial discretization}
\label{sec:MOL}
\subsubsection{The case of $1$-dimensional state space}
First, we shall discuss the case of $d=1$ for simplicity. Let a strictly increasing sequence $\Pi=\{x_i\}_{i=-N_0}^{N_0}$ of length $N\coloneqq 2N_0+1$ be the set of nodes on $\mathbb R$ and define $\delta x_i\coloneqq x_{i+1}-x_i$ for $-N_0\le i<N_0$. For any function $v:\mathbb R\to\mathbb R$, let $v^\Pi = (v(t,x_{-N_0}),v(t,x_{-N_0+1}),\dots,v(t,x_{N_0}))^\ast\in\mathbb R^N$ be the evaluation of $v$ over $\Pi$. Then, derivatives evaluated at nodes of $\Pi$ are replaced by 
\[
	\begin{pmatrix}
		\displaystyle\frac{\partial v}{\partial x}(x_{-N_0})\\
		\vdots\\
		\displaystyle\frac{\partial v}{\partial x}(x_{N_0}))
	\end{pmatrix}
	\approx D_1u^\Pi,\quad 
		\begin{pmatrix}
		\displaystyle\frac{\partial^2 v}{\partial x^2}(x_{-N_0})\\
		\vdots\\
		\displaystyle\frac{\partial^2 v}{\partial x^2}(x_{N_0}))
	\end{pmatrix}
	\approx D_2u^\Pi.
\]
Here, $N\times N$-matrices $D_1$ and $D_2$ are defined by
\begin{align}
	\begin{split}
	&e_i^\ast D_1e_j = 
	\begin{cases}
		\displaystyle\frac{-\delta x_i}{\delta x_{i-1}(\delta x_{i-1}+\delta x_i)},&j=i-1,\\
		\displaystyle\frac{\delta x_i-\delta x_{i-1}}{\delta x_i\delta x_{i-1}},&j=i,\\
		\displaystyle\frac{\delta x_{i-1}}{\delta x_i(\delta x_{i-1}+\delta x_i)},&j=i+1,\\
		0&\text{otherwise},
	\end{cases}\quad\text{for}\quad-N_0<i<N_0,\\
	&e_{-N_0}^\ast D_1e_i=e_{N_0}^\ast D_1e_i=0,\quad \text{for}\quad -N_0\le i\le N_0,
	\end{split}\label{D}\\
	\begin{split}
	&e_i^\ast D_2e_j = 
	\begin{cases}
		\displaystyle\frac{2}{\delta x_{i-1}(\delta x_{i-1}+\delta x_i)},&j=i-1,\\
		\displaystyle\frac{-2}{\delta x_i\delta x_{i-1}},&j=i,\\
		\displaystyle\frac{2}{\delta x_i(\delta x_{i-1}+\delta x_i)},&j=i+1,\\
		0&\text{otherwise},
	\end{cases}\quad\text{for}\quad-N_0<i<N_0,\\
	&e_{-N_0}^\ast D_2e_i=e_{N_0}^\ast D_2e_i=0,\quad \text{for}\quad -N_0\le i\le N_0,
	\end{split}\label{D2}
\end{align}
that result from the central difference scheme, and we denote $e_i$ as the $(i+N_0+1)$-th unit vector in $\mathbb R^N$ whose $(i+N_0+1)$-th element is $1$. We then solve, in place of PDE \eqref{PDE}, a system of $N$ ODEs in what follows:
\begin{equation}
  \label{TVP}
  \frac{dU_t^\Pi}{dt} + QU_t^\Pi +F(t,U_t^\Pi)=0\quad\text{for}\quad t\in[0,T],\quad U_T^\Pi = G.
\end{equation}
Here, $G=(g(x_{-N_0}),\dots,g(x_{N_0}))^\ast\in\mathbb R^N$, $F:[0,T]\times\mathbb R^N\to\mathbb R^N$ is defined by
\begin{equation}
	\label{eq:definitionF}
	F(t,z)=\begin{pmatrix}
		f(t,x_{-N_0},e_{-N_0}^\ast z,\sigma(x_{-N_0})e_{-N_0}^\ast D_1z)\\\vdots\\f(t,x_{N_0},e_{N_0}^\ast z,\sigma(x_{N_0})e_{N_0}^\ast D_1z)
	\end{pmatrix},
\end{equation}
and
\[
	Q=\operatorname{diag}(\mu^\Pi)D_1 + \frac12\operatorname{diag}((\sigma^2)^\Pi)D_2
\]
approximates the infinitesimal generator $\mathcal L$ of $\mathcal X$. Using \eqref{D} and \eqref{D2}, each element of $Q$ is 
\begin{equation}
	\begin{split}
	&e_i^\ast Qe_j = 
	\begin{cases}
		\displaystyle\frac{\sigma^2(x_i)-\delta x_i\mu(x_i)}{\delta x_{i-1}(\delta x_{i-1}+\delta x_i)},&j=i-1,\\
		\displaystyle\frac{(\delta x_i-\delta x_{i-1})\mu(x_i)-\sigma^2(x_i)}{\delta x_i\delta x_{i-1}},&j=i,\\
		\displaystyle\frac{\sigma^2(x_i)+\delta x_{i-1}\mu(x_i)}{\delta x_i(\delta x_{i-1}+\delta x_i)},&j=i+1,\\
		0&\text{otherwise},
	\end{cases}\quad\text{for}\quad-N_0<i<N_0,\\
	&e_{-N_0}^\ast Qe_i=e_{N_0}^\ast Qe_i=0,\quad \text{for}\quad -N_0\le i\le N_0.
	\end{split}\label{Q}
\end{equation}
In Appendix \ref{sec:convergence}, we give a convergence result of \eqref{TVP} to \eqref{PDE} in a case which a unique classical solution of \eqref{PDE} exists. 
\begin{theorem}[Convergence]
	\label{thm:convergence}
	Consider the case of $d=1$ and take a spatial grid $\Pi^{(N,\Delta x)}\coloneqq\{i\Delta x\}_{i=-N_0}^{N_0}$ for some $\Delta x>0$. Suppose that Assumption \ref{basicassumption}, \ref{globallipshcitz} in Appendix \ref{sec:convergence}, and \eqref{PDE} admits a unique solution $u$. (For example, \eqref{PDE} is uniquely solvable in the case of Lemma \ref{wellposedness}.) Denote $U_t^{(N,\Delta x)}$ as a unique solution of \eqref{TVP} in the case of $\Pi^{(N,\Delta x)}$. For any compact set $K\subset\mathbb R$, it holds
	\[
		\lim_{\Delta x\to0}\lim_{N\to\infty}\sup_{\substack{-N_0\le i\le N_0\\i\in\mathbb Z,i\Delta x\in K}}|u(t,i\Delta x)-e_i^\ast U_t^{(N,\Delta x)}| = 0,
	\]
	where $e_i$ is the $(i+N_0+1)$-th unit vector in $\mathbb R^N$ whose $(i+N_0+1)$-th element is $1$.
\end{theorem}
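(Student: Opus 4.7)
The plan is to handle the iterated limit by introducing an intermediate object: the analog of the ODE system \eqref{TVP} on the infinite lattice $\Delta x \mathbb{Z}$. Using the central-difference formula \eqref{Q} extended to all of $\mathbb{Z}$ defines a transition rate matrix $Q^{\infty}_{\Delta x}$ and an infinite system of ODEs, whose unique solution $V^{(\Delta x)}$ I will set up in a suitable polynomially-weighted $\ell^\infty$ space (needed because $g$ may have polynomial growth by \eqref{lip1}). The inner limit will compare $U^{(N,\Delta x)}$ with $V^{(\Delta x)}$, and the outer limit will compare $V^{(\Delta x)}$ with the classical PDE solution $u$.

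For the inner limit (fixed $\Delta x$, $N \to \infty$) I would appeal to a countable-state-space extension of Theorem \ref{thm:CTMCFeynmanKac}. Both $U^{(N,\Delta x)}$ and $V^{(\Delta x)}$ then admit CTMC Feynman-Kac representations; coupling the finite and infinite chains from a common interior state $x_i$, they agree pathwise until the finite chain first hits the boundary $\{x_{\pm N_0}\}$ at a stopping time $\tau$. A Chebyshev-type moment bound, using that the jump rates in \eqref{Q} are $O(1/\Delta x^2)$ and jump sizes are $\Delta x$, yields $\mathbb{E}\bigl|X^{\infty,\Delta x}_T - x_i\bigr|^2 = O(T)$ uniformly for $x_i \in K$, so $\mathbb{P}(\tau \le T) \to 0$ as $N \to \infty$. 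A standard BSDE/Gronwall stability estimate then converts this smallness of the exit probability into $|e_i^\ast U_t^{(N,\Delta x)} - V_t^{(\Delta x)}(x_i)| \to 0$, uniformly for $x_i \in K$.

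For the outer limit ($\Delta x \to 0$) I would invoke the classical finite-difference consistency-and-stability paradigm at the infinite-lattice level. \emph{Consistency:} since by assumption \eqref{PDE} has a unique classical solution $u$ with sufficient regularity (as in Lemma \ref{wellposedness}), a Taylor expansion gives $e_i^\ast Q^{\infty}_{\Delta x} u(t,\cdot)|_{\Delta x \mathbb{Z}} = \mathcal{L} u(t, i\Delta x) + O(\Delta x^2)$ uniformly on compacts, and analogously for the discrete gradient appearing in $F$. \emph{Stability:} once $\Delta x$ is small enough that the off-diagonal entries in \eqref{Q} are non-negative, $Q^{\infty}_{\Delta x}$ is a genuine $Q$-matrix, so $e^{sQ^{\infty}_{\Delta x}}$ is sup-norm contractive; combined with the Lipschitz continuity of $F$, the Duhamel representation of the error process $V^{(\Delta x)} - u|_{\Delta x \mathbb{Z}}$ and Gronwall's inequality yield $\sup_{i\Delta x \in K}|V_t^{(\Delta x)}(i\Delta x) - u(t, i\Delta x)| \to 0$. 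Chaining with the inner-limit bound completes the proof.

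The main obstacle is the infrastructure for the inner limit: working with BSDEs and ODEs on a countably infinite state space. One must fix a function space (the polynomially-weighted $\ell^\infty$ suggested by the growth of $g$), prove well-posedness of both the infinite-lattice ODE and the associated infinite-dimensional BSDE there, and establish the Feynman-Kac representation in that setting so that the path coupling of the two CTMCs is meaningful. The coupling and exit-probability estimate themselves are standard, but setting up a weighted BSDE stability framework so that the Gronwall bound on $[\tau \wedge T, T]$ does not blow up for unbounded data is the most delicate point, and is precisely the reason the inner limit is taken at $\Delta x$ fixed before the outer limit is attempted.
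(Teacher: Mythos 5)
Your overall skeleton coincides with the paper's: Appendix~\ref{sec:convergence} also interposes the infinite-lattice system \eqref{infiniteODE} on $\Delta x\,\mathbb Z$, proves the inner limit $N\to\infty$ at fixed $\Delta x$ (Lemma \ref{convergence2}), proves the outer limit $\Delta x\to0$ (Lemma \ref{convergence1}), and chains the two by the triangle inequality. The execution of both halves, however, is quite different. For the inner limit the paper stays entirely analytic: it derives uniform bounds on $U^{(N,\Delta x)}$ and on $dU^{(N,\Delta x)}/dt$, equicontinuity of both, applies Arzel\`a--Ascoli with a diagonal argument, identifies every subsequential limit with the unique solution of \eqref{infiniteODE}, and concludes. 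Your probabilistic route (countable-state Feynman--Kac, coupling of the finite and infinite chains, exit-time estimates, BSDE stability) would require building an entire countable-state CTMC--BSDE well-posedness and representation theory that the paper never needs; you correctly identify this as the delicate point, but it is genuinely a large amount of unconstructed infrastructure rather than a routine extension of Theorem \ref{thm:CTMCFeynmanKac}. Note also that the weighted $\ell^\infty$ space is unnecessary: Assumption \ref{basicassumption} includes $g\in C_b^2(\mathbb R)$ and bounded $f$, so the data are bounded and plain $\ell^\infty$ suffices, which is exactly the space the paper works in. A smaller point: to get $\mathbb P(\tau\le T)\to0$ you need to control the running maximum of the chain (a Doob-type maximal inequality for the martingale part), not merely $\mathbb E|X_T-x_i|^2$.

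The concrete gap is in your outer limit. You assert consistency of $Q^\infty_{\Delta x}$ with $\mathcal L$ ``uniformly on compacts,'' but the Duhamel--Gronwall stability argument you then invoke runs in the sup norm over the \emph{entire} lattice: $e^{sQ^\infty_{\Delta x}}$ is nonlocal, so truncation errors in the far field contaminate the estimate on $K$ after time $T$. A classical $C^{1,2}$ solution of \eqref{PDE} gives only locally uniform consistency of the central differences (and your claimed $O(\Delta x^2)$ rate would in fact need $u\in C^4$ in $x$); to make the global $\ell^\infty$ Gronwall bound work you need either globally bounded and uniformly continuous $\partial_{xx}u$ (additional regularity not contained in the hypothesis that $u$ is a classical solution) or a localization/barrier argument you have not supplied. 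The paper sidesteps this entirely by citing Walter's longitudinal method-of-lines theorem (Lemma \ref{convergence1}), whose proof is by sub-/super-solution comparison under precisely Assumption \ref{basicassumption}. Your approach is repairable (e.g.\ via interior Schauder estimates giving global bounds on the derivatives of $u$, or a cutoff argument), but as written this step does not close.
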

Thus, for obtaining the system of ODE which approximate the PDE with a small error in given bounded $K$, we should take a sufficiently small $k>0$ and expand the spatial grid for sufficiently large $N$ that depends on $k$.

\subsubsection{The case of $d$-dimensional state space}
\label{sec:d-MOL}
Using the Kronecker product ``$\otimes$'', the argument in the case of $d=1$ can be carried out in multidimensional cases, that is, for any $d\in\mathbb N$. For $p=1,\dots,d$, let a strictly increasing sequence $\Pi^{(p)} = (x_i^{(p)})_{i=-N_0^{(p)}}^{N_0^{(p)}}$ of length $N^{(p)} = 2N_0^{(p)}+1$ be the set of nodes on the $p$-th axis in $\mathbb R^d$, and let $D_1^{(p)}$ and $D_2^{(p)}$ are the corresponding $N^{(p)}\times N^{(p)}$ difference matrices defined by \eqref{D} and \eqref{D2}, constructed on $\Pi^{(p)}$. Consider the grid on $\mathbb R^d$ by
\[
	\Pi = \Pi^{(1)}\otimes\Pi^{(2)}\otimes\cdots\otimes\Pi^{(d)} = (x_i = (x_{i_1}^{(1)},x_{i_2}^{(2)},\dots,x_{i_d}^{(d)}):i=1,\dots,N),
\]
where $N\coloneqq\prod_{p=1}^dN^{(p)}$ is the total size of $\Pi$, and multi-indices $(i_1,i_2,\dots,i_d)$ are ordered lexicographically. For $v:\mathbb R^d\to\mathbb R$, 1st and second derivatives along the $p$-th axis are approximated by
\[
	\frac{\partial v}{\partial x^{(p)}}(x_i)\approx e_i^\ast\widetilde D_1^{(p)}v^\Pi\quad\text{and}\quad \frac{\partial^2 v}{(\partial x^{(p)})^2}(x_i)\approx e_i^\ast\widetilde D_2^{(p)}v^\Pi,
\]
where matrix $\widetilde D_k^{(p)}$ for $k=1,2$ and $p=1,\dots,d$ is given by
\[
	\widetilde D_k^{(p)} \coloneqq I_{N^{(1)}}\otimes\cdots\otimes I_{N^{(p-1)}}\otimes D_k^{(p)}\otimes I_{N^{(p+1)}}\otimes\cdots\otimes I_{N^{(d)}}.
\]
In multidimensional cases, we additionally need to specify the approximation of cross derivatives since $\mathcal L$ possibly contains them. In this work, we approximate the cross derivative along the $p$-th and $q$-th axes as
\[
	\frac{\partial^2 v}{\partial x^{(p)}\partial x^{(q)}}(x_i) \approx e_i^\ast D_1^{(p,q)}v^\Pi,
\]
where
\begin{multline*}
	D_1^{(p,q)} \coloneqq I_{N^{(1)}}\otimes\cdots\otimes I_{N^{(p-1)}}\otimes D_1^{(p)}\otimes I_{N^{(p+1)}}\otimes\cdots\\
	\otimes I_{N^{(q-1)}}\otimes D_1^{(q)}\otimes I_{N^{(q+1)}}\otimes\cdots\otimes I_{N^{(d)}},
\end{multline*}
for $p<q$. In this situation, $Q$ is defined by 
\begin{multline}
	\label{eq:QDdim}
	Q = \sum_{p=1}^d \operatorname{diag}((\mu^{(p)})^\Pi)\widetilde D_1^{(p)} + \sum_{p=1}^{d-1}\sum_{q=p+1}^d\operatorname{diag}(((\sigma\sigma^\ast)^{(p,q)})^\Pi)\widetilde D_1^{(p,q)}\\
	+\frac12\sum_{p=1}^d\operatorname{diag}(((\sigma\sigma^\ast)^{(p,p)})^\Pi)\widetilde D_2^{(p)}
\end{multline}
Defining $F:[0,T]\times\mathbb R^N\to\mathbb R^N$ by 
\[
	F(t,z)=\begin{pmatrix}
		f(t,x_1,e_1^\ast z,\sigma^\ast(x_1)(e_1^\ast\widetilde D_1^{(1)}z,\dots,e_1^\ast\widetilde D_1^{(d)}z)^\ast)\\
		\vdots\\
		f(t,x_N,e_N^\ast z,\sigma^\ast(x_N)(e_N^\ast\widetilde D_1^{(1)}z,\dots,e_N^\ast\widetilde D_1^{(d)}z)^\ast)
	\end{pmatrix}
\]
and $G=(g(x_1),\dots,g(x_N))^\ast$, the system of ODEs results in the same form as \eqref{TVP}:
\begin{equation}
	\label{eq:multiODE}
  \frac{dU_t^\Pi}{dt} + QU_t^\Pi+F(t,U_t^\Pi)=0,\quad U_T^\Pi = G.
\end{equation}

\subsubsection{Probabilistic interpretation}
Recall that $Q$ in \eqref{Q} or \eqref{eq:QDdim} is constructed from the spatial discretization of the infinitesimal generator $\mathcal L$. In the probabilistic manner, it is natural to interpret $Q$ as the Q-matrix of a time-homogeneous CTMC. Since $Q$ might no longer be a Q-matrix, it is required to see the ``validity'' conditions of $Q$ to be the Q-matrix. In the case of $d=1$, one can easily give the following sufficiency condition. It guarantees the validity of a CTMC constructed by $Q$ provided the spatial difference is sufficiently fine. 
\begin{proposition}[Validity]
	\label{3.1}
  $Q$ defined by \eqref{Q} is the transition rate matrix of a continuous-time Markov chain if
  \begin{equation}
    \label{cond}
    0<\max_{-N_0\le i\le N_0-1}\{\delta x_i\}\le\min_{\substack{-N_0\le i\le N_0\\\mu(x_i)\neq0}}\left\{\frac{\sigma^2(x_i)}{|\mu(x_i)|}\right\}.
  \end{equation}
  Additionally, if the above inequality is strict, $e_{i-1}^\ast Qe_i$ and $e_{i+1}^\ast Qe_i$ are positive for all $i=-N_0+1,\dots,N_0-1$.
\end{proposition}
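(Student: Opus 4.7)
The plan is to verify directly the two defining properties of a Q-matrix: (i) row sums equal zero, and (ii) off-diagonal entries are nonnegative; then derive the sufficient condition \eqref{cond} from (ii). The boundary rows $i=\pm N_0$ are zero rows by definition in \eqref{Q}, so they trivially satisfy both conditions (they correspond to absorbing states), and the work is confined to the interior indices $-N_0<i<N_0$.

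For the row-sum identity, I would plug the three nonzero entries of row $i$ from \eqref{Q} into a single fraction with common denominator $\delta x_{i-1}\delta x_i(\delta x_{i-1}+\delta x_i)$ and collect the $\sigma^2(x_i)$ and $\mu(x_i)$ contributions separately. The $\sigma^2$-coefficient becomes $\delta x_i-(\delta x_{i-1}+\delta x_i)+\delta x_{i-1}=0$, and the $\mu$-coefficient becomes $-\delta x_i^{2}+(\delta x_{i-1}+\delta x_i)(\delta x_i-\delta x_{i-1})+\delta x_{i-1}^{2}=0$; this is a short and routine calculation. Because this cancellation holds irrespective of the mesh, it does not use \eqref{cond} at all.

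The content of the proposition is in the nonnegativity of the off-diagonal entries, which is where condition \eqref{cond} enters. Looking at \eqref{Q}, the numerators of $e_i^\ast Qe_{i+1}$ and $e_i^\ast Qe_{i-1}$ are $\sigma^2(x_i)+\delta x_{i-1}\mu(x_i)$ and $\sigma^2(x_i)-\delta x_i\mu(x_i)$ respectively, so nonnegativity of both is equivalent to $\sigma^2(x_i)\ge \max\{\delta x_{i-1},\delta x_i\}\,|\mu(x_i)|$. If $\mu(x_i)=0$ this is automatic; otherwise it is equivalent to $\max\{\delta x_{i-1},\delta x_i\}\le \sigma^2(x_i)/|\mu(x_i)|$. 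Taking the maximum over $i$ on the left and the minimum over $\{i:\mu(x_i)\neq 0\}$ on the right yields exactly \eqref{cond}, and the strict-inequality addendum follows by observing that under a strict version of \eqref{cond} both numerators are strictly positive (and the denominators are positive by $\delta x_i>0$).

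The only mild subtlety — not really an obstacle — is handling the possibility that $\mu(x_i)=0$ at some nodes, which is why \eqref{cond} restricts the minimum to the set $\{i:\mu(x_i)\neq 0\}$ (with the usual convention that the minimum over an empty set is $+\infty$, in which case the upper bound on $\max_i\delta x_i$ is vacuous and every positive mesh works). Beyond that, the proof is a direct algebraic verification, and no deeper probabilistic or analytic machinery is required.
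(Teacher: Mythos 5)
Your proof is correct and follows essentially the same route as the paper's: both verify the zero row-sum identity for interior rows by direct algebra (the boundary rows being zero by construction) and reduce nonnegativity of the off-diagonal entries to $\sigma^2(x_i)\ge\max\{\delta x_{i-1},\delta x_i\}\,|\mu(x_i)|$, which \eqref{cond} guarantees — the paper merely organizes this last step through the decomposition $\mu=\mu^+-\mu^-$ rather than by inspecting the two numerators directly. One small wording caveat: nonnegativity of both numerators is not \emph{equivalent} to $\sigma^2(x_i)\ge\max\{\delta x_{i-1},\delta x_i\}\,|\mu(x_i)|$ (only one of the two constraints binds, depending on the sign of $\mu(x_i)$), but since that condition is sufficient and is implied by \eqref{cond}, your argument for the stated sufficiency claim is unaffected.
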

\begin{proof}
	See Appendix \ref{sec:ProofOfLem3.1}.
\end{proof}
Note that validity conditions in multi-dimensional settings would be more complicated; the existence of terms that involve cross derivatives occasionally violates the definition of Q-matrices. Our approach presented in Section \ref{sec:Euler-Maruyama}, that substantially solves systems of ODEs using exponential integrators, works without any issues at least numerically, regardless of the validity. However, in certain situations when we need to simulate the CTMC, we should instead employ several discretization schemes that avoid the invalidity issues \cite{BouRabee2018,Meier2023}.

Assuming the validity of $Q$, let $X$ be a finite-state Markov chain with $Q$ as its Q-matrix. We now consider the Markov BSDE arising from \eqref{TVP} as 
\begin{equation}
	\label{eq:molCTMCBSDE}
	\begin{split}
	Y_t = X_T^\ast G+\int_{]t,T]}X_{s-}^\ast F(s,Z_s)ds -\int_{]t,T]}dM_s^\ast Z_s.
	\end{split}
\end{equation}
Thus, we can regard \eqref{eq:molCTMCBSDE} as spatially discretized counterpart of \eqref{BMBSDE}, and applying exponential integrators to \eqref{TVP} is equivalent to the (multi-stage) Euler-Maruyama methods of \eqref{eq:molCTMCBSDE}. 

We give a result on a uniqueness of \eqref{eq:molCTMCBSDE} under standard conditions. Theoretical justification of this probabilistic interpretation is completed with it.
\begin{proposition}
	\label{lemma3.2}
	Suppose that $\int_0^Tf(t,x,0,0)^2dt <\infty$ for any $x\in\mathbb R^d$, and that for some $L>0$, 
	\[
		|f(t,x,y,z)-f(t,x,y',z')|^2 \le L(|y-y'|^2+\|z-z'\|^2)
	\]
	for all $t\in[0,T]$, $x\in\mathbb R^d$, $y,y'\in\mathbb R$, and $z,z'\in\mathbb R^d$. Suppose further that $Q$ is valid, $e_i^\ast\widetilde D_1^{(p)}1=0$, and that 
	\begin{equation}
		e_j^\ast Qe_i = 0 \Longrightarrow e_j^\ast \widetilde D_1^{(p)}e_i = 0\quad\text{for}\quad p=1,\dots,d,
		\label{cond:q}
	\end{equation}
	for $i,j=1,\dots,N$. Then, \eqref{eq:molCTMCBSDE}, which is derived from the Markov chain approximation of \eqref{PDE}, has a unique solution.
\end{proposition}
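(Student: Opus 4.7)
The plan is to apply Theorem \ref{thm:wellposednessofCTMCBSDE} directly. I would first identify \eqref{eq:molCTMCBSDE} as a BSDE of the general form \eqref{eq:GeneralCTMCBSDE} by defining the driver $\tilde h:\{e_1,\ldots,e_N\}\times[0,T]\times\mathbb R\times\mathbb R^N\to\mathbb R$ by
\[
\tilde h(e_i,t,y,z)\coloneqq f\bigl(t,x_i,y,\sigma^\ast(x_i)(e_i^\ast\widetilde D_1^{(1)}z,\ldots,e_i^\ast\widetilde D_1^{(d)}z)^\ast\bigr).
\]
With this choice, $e_i^\ast F(t,z)=\tilde h(e_i,t,e_i^\ast z,z)$ as in Theorem \ref{thm:CTMCFeynmanKac}, so that $X_{t-}^\ast F(t,Z_t)=\tilde h(X_{t-},t,Y_{t-},Z_t)$ whenever $Y_{t-}=X_{t-}^\ast Z_t$. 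The square-integrability $\mathbb E[\int_0^T\tilde h(X_{t-},t,0,0)^2\,dt]<\infty$ is immediate from the hypothesis on $f(\cdot,x,0,0)$ and the finiteness of the grid. What remains is the Lipschitz estimate \eqref{Lipschitz} in the CTMC seminorm $\|\cdot\|_{X_{t-}}$.

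The main analytical task is to translate the Euclidean Lipschitz assumption on $f$ into this seminorm condition. After invoking the Lipschitz property of $f$ and the (nodewise) boundedness of $\|\sigma^\ast(x_i)\|$, the task reduces to proving
\[
|e_i^\ast\widetilde D_1^{(p)}(z-z')|^2\le C\,\|z-z'\|_{e_i}^2\qquad\text{for each }i,p,
\]
with $C$ independent of $z,z'$. For this I would first obtain an explicit representation of the seminorm. Using the Q-matrix row-sum property $e_i^\ast Qe_i=-\sum_{j\neq i}e_i^\ast Qe_j$, a direct computation of $w^\ast(\operatorname{diag}(Q^\ast e_i)-\operatorname{diag}(e_i)Q-Q^\ast\operatorname{diag}(e_i))w$ gives
\[
\|w\|_{e_i}^2=\sum_{j\neq i}(e_i^\ast Qe_j)\,(e_j^\ast w-e_i^\ast w)^2,
\]
exhibiting the seminorm as a nonnegative weighted sum of squared differences along the outgoing edges of state $i$.

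Next I would exploit both structural hypotheses on $\widetilde D_1^{(p)}$. The assumption $e_i^\ast\widetilde D_1^{(p)}\mathbf 1=0$ lets me rewrite
\[
e_i^\ast\widetilde D_1^{(p)}w=\sum_{j\neq i}(e_i^\ast\widetilde D_1^{(p)}e_j)\,(e_j^\ast w-e_i^\ast w),
\]
and the sparsity compatibility \eqref{cond:q} forces $e_i^\ast\widetilde D_1^{(p)}e_j=0$ whenever $e_i^\ast Qe_j=0$, so that only indices $j$ with $e_i^\ast Qe_j>0$ contribute. Weighted Cauchy--Schwarz with weights $e_i^\ast Qe_j$ then yields
\[
|e_i^\ast\widetilde D_1^{(p)}w|^2\le\Bigl(\sum_{\substack{j\neq i\\ e_i^\ast Qe_j>0}}\frac{(e_i^\ast\widetilde D_1^{(p)}e_j)^2}{e_i^\ast Qe_j}\Bigr)\|w\|_{e_i}^2,
\]
and the prefactor is bounded uniformly over $i,p$ because $N$ is finite. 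Combining these estimates delivers \eqref{Lipschitz} for $\tilde h$, after which Theorem \ref{thm:wellposednessofCTMCBSDE} produces the unique solution.

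The main obstacle is this seminorm Lipschitz bound itself. As the remark following Theorem \ref{thm:wellposednessofCTMCBSDE} emphasizes, Euclidean Lipschitz continuity does \emph{not} by itself imply Lipschitz continuity in $\|\cdot\|_{X_{t-}}$: there exist nonzero vectors whose seminorm vanishes. The two structural hypotheses on $\widetilde D_1^{(p)}$ are imposed precisely so that the discrete first-derivative operator is dominated by the CTMC quadratic-variation functional $\|\cdot\|_{e_i}^2$, which is exactly what the Cohen--Elliott well-posedness theorem requires.
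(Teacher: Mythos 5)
Your proof is correct, and it reaches the key inequality $\|\sigma^\ast(x_i)(e_i^\ast\widetilde D_1^{(1)}(z-z'),\dots,e_i^\ast\widetilde D_1^{(d)}(z-z'))^\ast\|^2\le C\|z-z'\|_{e_i}^2$ by a genuinely different and noticeably shorter route than the paper. The paper recasts this inequality as the positive semi-definiteness of the matrix $M_{i,C}=C\psi_{Q,e_i}-M_0$ with $M_0=\sum_{p,q}(\sigma\sigma^\ast)^{(p,q)}(x_i)(\widetilde D_1^{(q)})^\ast e_ie_i^\ast\widetilde D_1^{(p)}$, and then spends several lemmas establishing that the reduced matrix $\psi_{Q,e_i}^{(N-1)}$ is positive definite (via a discriminant computation), that $c M_1-M_2$ is positive semi-definite for large $c$ when both annihilate $\mathbbm 1$, and finally a separate bookkeeping step (restriction to the index set $\mathcal I(e_i^\ast Q)$ of nonzero entries) to handle rows of $Q$ containing zeros --- this last step being exactly where hypothesis \eqref{cond:q} enters for the paper. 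You instead diagonalize the problem by hand: the identity $\|w\|_{e_i}^2=\sum_{j\neq i}(e_i^\ast Qe_j)(e_j^\ast w-e_i^\ast w)^2$ (which follows from the row-sum property of $Q$ exactly as you compute), the rewriting of $e_i^\ast\widetilde D_1^{(p)}w$ as a sum of the same differences using $e_i^\ast\widetilde D_1^{(p)}\mathbbm 1=0$, the restriction of that sum to $\{j:e_i^\ast Qe_j>0\}$ via \eqref{cond:q}, and a weighted Cauchy--Schwarz. This makes the role of each structural hypothesis transparent, absorbs the degenerate rows (where $e_i^\ast Q=0$) for free since both sides then vanish, and gives an explicit constant $\sum_{j}(e_i^\ast\widetilde D_1^{(p)}e_j)^2/(e_i^\ast Qe_j)$, uniform over the finite grid. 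The reduction to Theorem \ref{thm:wellposednessofCTMCBSDE} and the square-integrability check are identical to the paper's Step 1. I see no gap; if anything, your argument is the cleaner one, though the paper's matrix formulation records the equivalent statement that $C\psi_{Q,e_i}-M_0\succeq 0$, which packages the full quadratic form (including the cross terms weighted by $(\sigma\sigma^\ast)^{(p,q)}$) in one object rather than bounding each directional derivative separately and summing.
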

\begin{proof}
	See Appendix \ref{sec:ProofOfLem3.2}.
\end{proof}

\begin{remark}
	As a related study on CTMC approximation of SDEs, we refer to \cite{Mijatovic2013,Cui2018,Li2018,Kirkby2020,Cui2019}.
\end{remark}

\begin{remark}
	In \cite{Cohen2012}, the authors presented a CTMC version of least-squares Monte Carlo methods. Although this type of method is also a natural counterpart of numerical solutions of Markov BSDEs based on the Euler-Maruyama temporal discretization, we note that it is not suitable to solve CTMC-driven Markov BSDEs arising from spatial discretization of Brownian motion-driven Markov BSDEs. A major bottleneck is that the resulting $Q$-matrix may contain quite larger absolute values. To illustrate it, suppose that $\mu(x)=\mu x$, $\sigma(x)=\sigma x$, and $\delta x_i\equiv \Delta x>0$. Plugging them into \eqref{Q}, we obtain
	\[
		e_i^\ast Qe_{i-1} = \frac{\sigma^2 i^2-\mu i}2,\quad e_i^\ast Qe_{i+1} = \frac{\sigma^2 i^2+\mu i}2.
	\]
	If $i$ is nearby $N$ and $N$ is large, these elements take large values, which implies the resulting Markov chain jumps too rapidly. It interferes with us simulating CTMCs naively using Gillespie's exact simulation or the 1st-order approximation of the transition probability matrix; the former suffers from a tremendously large number of jumps, and the latter method requires the temporal step small enough for each row of the resulting matrix to represent probabilities.
\end{remark}

\subsection{Multilevel spatial approximation using sparse grids}
\label{sec:SGcomb}
In multi-dimensional settings, even in 2-dimensional cases, spatial discretization typically results in CTMCs with a large state space. That brings us to a numerical limitation known as the curse of dimensionality, and computation in more than 4-dimensional cases is challenging. In this section, we propose a CTMC approximation method of solving high-dimensional BSDEs, using sparse grids. 

Sparse grid methods date back to a study by Smolyak \cite{Smolyak1963}, in which he presented an algorithm for constructing multivariate quadrature and interpolation rules from a linear combination of tensor products of univariate ones in a specific manner. It mitigates the curse of dimensionality, which conventional formulas that compute solutions on the ``full grid'' cannot overcome. Beyond quadratures and interpolations, the construction of sparse grids has applications to various fields such as data mining \cite{Garcke2001} or differential equations. As introduced in Section 4.5 in \cite{Bungartz2004}, different approaches have been presented to apply the idea of sparse grids to solving partial differential equations. The most straightforward one among them will be the sparse grid combination technique. Motivated by the observation that the sparse grid can be decomposed to a combination of several coarser rectangular grids, this approach constructs a numerical solution from a linear combination of numerical solutions of PDEs on those grids. As a result, one can compute solutions by applying the PDE solver at hand to our PDE on each resolution grid without specific treatments. 

Let us apply the sparse grids combination technique to the presented method in section \ref{sec:MOL}. We first need to approximate the spatial domain $\mathbb R^d$ of \eqref{PDE} with a fixed, bounded, and rectangular domain on which we can define sparse grids. For the sake of simplicity, we set a hypercube $[-1,1]^d$ as such a domain; we note that the argument in this section can be applied to arbitrary bounded rectangular domains using dilation and translation. For $l\in\mathbb N$, let $\chi^l = \{z_1^l,\dots,z_{m_l}^l\}\subset\mathbb R$ be a prescribed set of nodes parametrized by $l$; in this section, we specifically set equidistant nodes
\[
  z_i^l = \frac{2(i-1)}{m_l-1}-1,\quad\text{for}\quad i=1,\dots,m_l,
\]
with $m_l = 2^l+1$. For $t\in[0,T]$, consider the piecewise linear interpolation of the solution $U_t^{(l_1,\dots,l_d)}\coloneqq U_t^{\chi^{l_1}\otimes\cdots\otimes\chi^{l_d}}$ of \eqref{eq:multiODE} for $\Pi=\chi^{l_1}\otimes\cdots\otimes\chi^{l_d}$ defined as
\[
	u^{(l_1,\dots,l_d)}(t,x_1,\dots,x_d) = \sum_{i_1=1}^{m_{l_1}}\cdots\sum_{i_d=1}^{m_{l_d}}U_t^{(l_1,\dots,l_d)}(z_{i_1}^{l_1},\dots,z_{i_d}^{l_d})\prod_{p=1}^da_{i_p}^{l_p}(x_p)
\]
for $(x_1,\dots,x_d)\in[-1,1]^d$, where $a_{i_p}^{l_p}(x_p)$ are the one-dimensional piecewise linear bases defined as
\[
	a_i^l(x) = (1-\frac{m_l-1}2|x-z_i^l|)\vee0\quad\text{for}\quad x\in[-1,1],\quad i=1,\dots,m_l,\quad l\in\mathbb N,
\]
and $U_t^{(l_1,\dots,l_d)}(z_{i_1}^{l_1},\dots,z_{i_d}^{l_d})$ means the component of the vector $U_t^{(l_1,\dots,l_d)}$ corresponding to $(z_{i_1}^{l_1},\dots,z_{i_d}^{l_d})\in\chi^{l_1}\otimes\cdots\otimes\chi^{l_d}$. The sparse grid solution with the level parameter $q\in\mathbb N$ is then constructed from $u^{(l_1,\dots,l_d)}(t,x_1,\dots,x_d)$ by 
\begin{equation}
	\label{eq:SGformula}
  u^{\text{SG},q}(t,x_1,\dots,x_d) = \sum_{\substack{\mathbf{l} = (l_1,\dots,l_d)\in\mathbb N^d\\q-d+1\le|\mathbf l|_1\le q}}(-1)^{q-|\mathbf l|_1}\binom{d-1}{q-|\mathbf l|_1}\cdot u^{\mathbf l}(t,x_1,\dots,x_d)
\end{equation}
for $(x_1,\dots,x_d)\in[-1,1]^d$, where $|\mathbf l|_1 = l_1+\cdots+l_d$ for $\mathbf l = (l_1,\dots,l_d)$ and $\displaystyle\binom{d-1}{q-|\mathbf l|_1}$ is the binomial coefficient. 

Numerically, we obtain an algorithm in what follows: 
\begin{enumerate}
	\item Solve systems of ODEs for each $\mathbf l$ such that $q-d+1\le|\mathbf l|_1\le q$.
	\item For each multi-index $\mathbf l$ and discrete time $t_m$, construct the $d$-dimensional piecewise linear interpolant $\mathbf x\mapsto u^{\mathbf l}(t_m,\mathbf x)$ of $\{U_{t_m}^{\mathbf l}(\mathbf z), \mathbf z\in\chi^{l_1}\otimes\cdots\otimes\chi^{l_d}\}$.
	\item Combine interpolants according to \eqref{eq:SGformula}. The obtained function $u^{\text{SG},q}(t,\mathbf x)$ can be evaluated at any $\mathbf x\in[-1,1]^d$ for each discretized time $t_m$. 
\end{enumerate}
The numerical solution $u^{\text{SG},q}$ achieves much less computational cost with a slight deterioration in its quality, in comparison to the corresponding full grid formula i.e. $u^{(n,\dots,n)}$ for $n=q-d+1$. Specifically, the total number of spatial points that the sparse grid formula with parameter $q$ contains is $O(2^qq^{d-1})$. Taking that the full grid formula at the same level is comprised of $O(2^{qd})$ spatial points into account, it turns out to be a significant reduction in computational complexity \cite{Griebel1992}. We can also give an error estimate based on arguments in existing leteratures such as \cite{Griebel1992,Leentvaar2008,LopezSalas2017} . Let $\alpha>0$ be the order of accuracy of the spatial discretization scheme. Suppose that we know the exact solutions $U_t^{\mathbf l}$ of each systems of ODEs in \eqref{eq:SGformula}, the exact solution $u$ (of the approximated PDE on $[-1,1]^d$) are sufficiently smooth, and an error expansion
\begin{multline*}
	u(t,x_1,\dots,x_d) - u^{\mathbf l}(t,x_1,\dots,x_d) \\
	= \sum_{i=1}^d\sum_{j_1=1}^d\sum_{j_2=j_1+1}^d\cdots\sum_{j_i=j_{i-1}+1}^dC^{(i)}(t,x_1,\dots,x_d,h_{j_1},h_{j_2},\dots,h_{j_i})h_{j_1}^\alpha h_{j_2}^\alpha\cdots h_{j_i}^\alpha
\end{multline*}
exists for some bounded functions $C^{(i)}$ for $i=1,\dots,d$. Here, $h_i=2^{-i}$ for $i\in\mathbb N$. Then, according to the existing literatures aforementioned, the error estimate reads
\[
	u(t,x_1,\dots,x_d)-u^{\text{SG},q}(t,x_1,\dots,x_d) = O(h_q^{-\alpha}(\log_2(h_q^{-1}))^{d-1})(=O(2^{-\alpha q}q^{d-1}))
\]
as $q\to\infty$, which is slightly worse than $O(h_q^{-d})(=O(2^{-\alpha q}))$ the estimate in the full grid case. 

\section{Numerical Results}
\label{sec:numericalexperiments}
In this section, we demonstrate the efficiency and stability of the numerical approach presented in Section \ref{sec:Application}, using several examples. We apply spatial discretization to the BSDEs driven by Brownian motions, obtain BSDEs driven by CTMCs (i.e. a system of ODEs), and calculate numerical solutions using multi-stage Euler-Maruyama methods (i.e. exponential integrators.) Before moving on to specific results, we explain the details on settings in what follows:

\paragraph{Spatial Discretization}
We approximate the unbounded spatial domain of the problem at hand with a bounded one. Because all of the BSDEs we solve in this section have $(0,\infty)^d$ as spatial domains of them, we approximate as 
\begin{equation}
	\label{eq:grid1}
	(0,\infty)^d\approx[0,2x_1]\times\cdots\times[0,2x_d]
\end{equation}
for some $x_1,\dots,x_d>0$, or sometimes as 
\begin{equation}
	\label{eq:grid2}
	(0,\infty)^d\approx[\Delta_{x_1},2x_1-\Delta_{x_1}]\times\cdots\times[\Delta_{x_d},2x_d-\Delta_{x_d}]
\end{equation}
for small $\Delta_{x_1},\dots,\Delta_{x_d}>0$. The latter is applied to the problems that should be evaluated only at points on the first quadrant. Here, $(x_1,\dots,x_d)$ is a point at which we want to evaluate the numerical solution. 

Throughout the section, $\Pi_x^{\mathrm{Unif}}(x_{\mathrm{left}},x_{\mathrm{center}},x_{\mathrm{right}},N_{x,0})=(x_i)_{i=-N_{x,0}}^{N_{x,0}}$ means the standard one-dimensional uniform grid such that $x_{-N_{x,0}}=x_{\mathrm{left}}$, $x_0=x_{\mathrm{center}}$ and $x_{N_{x,0}}=x_{\mathrm{right}}$. We occasionally omit its arguments and simply write $\Pi_x^{\mathrm{Unif}}$. In addition to the uniform grid, a non-uniform grid is also employed for spatial discretization. We use a version of the (one-dimensional) Tavella-Randall-type grids \cite{Randall2000} in what follows:
\begin{equation}
	x_k =
	\begin{cases}
		\displaystyle x_{\mathrm{center}} + g_1\sinh\left(\mathop{\operatorname{arc}\sinh}\left(\frac{x_{\mathrm{center}}-x_{\mathrm{left}}}{g_1}\right)\cdot\frac{k}{N_{x,0}}\right),\quad k=-N_{x,0},\dots,-1,0,\\
		\displaystyle x_{\mathrm{center}} + g_2\sinh\left(\mathop{\operatorname{arc}\sinh}\left(\frac{x_{\mathrm{right}}-x_{\mathrm{center}}}{g_2}\right)\cdot\frac{k}{N_{x,0}}\right),\quad k=1,2,\dots,N_{x,0},
	\end{cases}
	\label{eq:Tavella-Randall}
\end{equation}
where $N_x \coloneqq 2N_{x,0}+1$ is the grid size, $x_{\mathrm{left}}$ and $x_{\mathrm{right}}$ are the leftmost and rightmost points of the domain, $x_{\mathrm{center}}\in(x_{\mathrm{left}},x_{\mathrm{right}})$ is the central point of the grid, and $g_1$ and $g_2$ are parameters for the left- and right-side of the grid, respectively. Note that $x_{-N_{x,0}} = x_{\mathrm{left}}$, $x_0=x_{\mathrm{center}}$ and $x_{N_{x,0}}=x_{\mathrm{right}}$. Intuitively, setting $g_1,g_2\ll x_{\mathrm{right}}-x_{\mathrm{left}}$ leads to the grid that is highly concentrated around $x_{\mathrm{center}}$. It is commonly used in numerical computation for pricing options to mitigate the effect of the nonlinearity of the payoff function  \cite{Bodeau2000,Mijatovic2013,Randall2000}. Similarly to the uniform grid, denote $\Pi_x^{\mathrm{TR}}(x_{\mathrm{left}},x_{\mathrm{center}},x_{\mathrm{right}},N_{x,0},g_1,g_2)$ as the Tavella-Randall grid \eqref{eq:Tavella-Randall} whose parameters are $(x_{\mathrm{left}},x_{\mathrm{center}},x_{\mathrm{right}},N_{x,0},g_1,g_2)$.

\begin{figure}[ht]
	\centering
	\begin{minipage}[b]{\textwidth}
		\includegraphics[width=\textwidth]{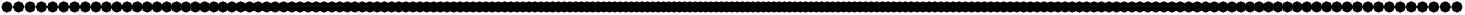}
		\caption{$g_1=g_2=50.0$}
		\label{fig:demo1}
	\end{minipage}\\
	\begin{minipage}[b]{\textwidth}
		\includegraphics[width=\textwidth]{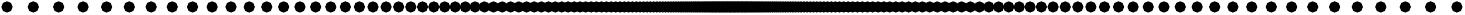}
		\caption{$g_1=g_2=5.0$}
		\label{fig:demo1}
	\end{minipage}\\
	\begin{minipage}[b]{\textwidth}
		\includegraphics[width=\textwidth]{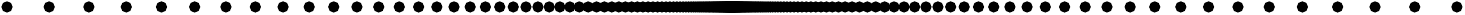}
		\caption{$g_1=g_2=0.5$}
		\label{fig:demo1}
	\end{minipage}
	\caption{Examples of Tavella-Randall grids with $x_{\mathrm{left}}=0$, $x_{\mathrm{center}}=100$, $x_{\mathrm{right}}=200$, $N_{x,0}=100$ and $g_1=g_2\in\{50,5,0.5\}$.}
	\label{fig:TVgrid}
\end{figure}

\paragraph{Temporal Discretization}
We employ solvers implemented in \verb+DifferentialEquations.jl+ \cite{Rackauckas2017}, listed below:
\begin{itemize}
	\item \verb+LawsonEuler+ : A single-stage method of classical/stiff order 1/1, referred to as the Lawson-Euler method \cite{Lawson1967}.
	\item \verb+NorsettEuler+ : A single-stage method of classical/stiff order 1/1, referred to as the N\o rsett-Euler method or ETD1RK method \cite{Cox2002}.
	\item \verb+ETDRK2+ : A 2-stage method of classical/stiff order 2/2 \cite{Cox2002}.
	\item \verb+ETDRK3+ : A 3-stage method of classical/stiff order 3/3 \cite{Cox2002}.
	\item \verb+ETDRK4+ : A 4-stage method of classical/stiff order 4/2 \cite{Cox2002}.
	\item \verb+HochOst4+ : A 5-stage method of classical/stiff order 4/4, developed by Hochbruck and Ostermann \cite{Hochbruck2005}.
\end{itemize}
Taking temporal grid size $N_t\in\mathbb N$, we calculate solutions on the grid $\Pi_t^{\mathrm{Unif}}(N_t)=(i\Delta t)_{i=0}^{N_t}$ using these exponential integrators. Here, $\Delta t = T/N_t$ is the step size. Note that a large-scale system of ODEs is obtained from the spatial discretization in each experiment. In this case, employing Krylov subspace methods in evaluating matrix exponentials and related $\phi$ functions is more effective, as described in Remark \ref{rem:krylov}. In all the experiments, we use the Arnoldi iteration with a size-$m$ Krylov subspace, which is readily available on all the solvers above. For simplicity, we always take $m=100$.

\paragraph{A least-squares Monte Carlo method: A reference}
BSDEs (driven by Brownian motion) that appeared in this section include those whose analytical solutions are unknown. In experiments of those BSDEs, as a reference, we shall report numerical solutions using the least squares Monte Carlo (LSMC) method \cite{Longstaff2001,Gobet2005,Bender2012} using Laguerre polynomials as
\begin{equation}
	\mathrm{poly}_p^{\mathrm{Lag}}(x)=\sum_{k=0}^p\frac{(-1)^k}{k!}\binom pk x^k,\quad p=0,1,2,\dots.
	\label{eq:Laguerre}
\end{equation}
Note that for the multi-dimensional case, the basis function corresponds to the Cartesian product of \eqref{eq:Laguerre}. Since LSMC methods include randomness, we independently calculate solutions for $50$ times, and report the mean values, the standard deviations, and the total runtimes.

\paragraph{Implementation}
All of our experiments were performed on a 3.70 GHz, 64-GB RAM Linux workstation. Our code was written entirely in Julia \cite{Bezanson2017} and all the plots were produced using \verb+Plot.jl+ \cite{Christ2023}. The full code for the experiments is available at \href{https://github.com/kanekoakihiro/EMCTMCBSDE}{https://github.com/kanekoakihiro/EMCTMCBSDE}.

\subsection{European call option under the Black-Scholes model}
\label{sec:exp1BSlinear}
First, we consider a linear BSDE arising from pricing a European call option under the Black-Scholes model:
\begin{equation}
	\begin{split}
		\mathcal S_t &= s_0 + \int_0^t\mu\mathcal S_sds + \int_0^t\sigma\mathcal S_sdW_s,\\
		\mathcal Y_t &= (\mathcal S_T-K)^+-\int_t^Tr\mathcal Y_tdt-\int_t^T\mathcal Z_tdW_t.
	\end{split}
	\label{eq:exp1BSlinear}
\end{equation}
Here, $K$ is the strike price of the European call option, $T$ is the maturity, $r$ is the interest rate, and $\mathcal S_t$ is the spot price of the underlying risky asset with initial price $s_0$, appreciation $\mu$ and volatility $\sigma$. Then, terminal condition $\mathcal Y_T = (\mathcal S_T-K)^+ = \max\{S_T-K,0\}$ is the payoff of the European call opton, and the solution $\mathcal Y_t=\mathcal Y_t^{t,\mathcal S_t}$ means the price of the option at time $t\in[0,T]$ and spot price $\mathcal S_t$. For details on the derivation of \eqref{eq:exp1BSlinear}, see Section 4.5.1. (p.91) in \cite{Zhang2017}. Using the cumulative distribution function of the standard Gaussian distribution $\Psi(x)$, the solution can be evaluated as 
\begin{equation}
	\label{BSprice}
	\begin{cases}
	\mathcal Y_t = \mathcal S_t\cdot\Psi(d_1)-K\exp(-r(T-t))\cdot\Psi(d_2),\\
	\mathcal Z_t = \mathcal S_t\cdot\Psi(d_1)\sigma,
	\end{cases}
\end{equation}
where $d_1$ and $d_2$ are constants as follows
\[
	d_1 = \frac{\ln\left(\frac{\mathcal S_t}K\right)+\left(r+\frac{\sigma^2}2\right)(T-t)}{\sigma\sqrt{T-t}},\quad d_2 = \frac{\ln\left(\frac{\mathcal S_t}K\right)+\left(r-\frac{\sigma^2}2\right)(T-t)}{\sigma\sqrt{T-t}}.
\]
We choose the parameters of \eqref{eq:exp1BSlinear} as follows:
\[
\begin{tabular}{ccccc}
   $T$ & $K$ & $r$ & $\mu$ & $\sigma$ \\\hline
   $1$ & $100$ & $0.03$ & $0.03$ & $0.2$ 
\end{tabular}
\]
Despite a pretty simple case, its spatial discretization leads to a stiff system of ODEs. To this end, we discretize \eqref{eq:exp1BSlinear} on the Tavella-Randall grid $\Pi_x^{\mathrm{TR}}$ with $(x_{\mathrm{left}},x_{\mathrm{center}},x_{\mathrm{right}},N_{x,0},g_1,g_2)=(0,100,200,1000,50,50)$ and calculate solutions of the resulting system of ODEs using \verb+DP5+, an implementation of the Dormand-Prince explicit solver in Julia, for different time steps $N_t$. Consequently, we observed that it requires approximately $58005$ steps along the temporal direction to achieve the ``stable'' solution; otherwise, terribly large and rapid oscillations occur in some parts of the numerical solution. Fig. \ref{fig:1} shows the surface plots of the numerical solutions of $(t,x)\mapsto\mathcal Y_t^{t,x}$ for $57990$, $57995$, $58000$, and $58005$ temporal steps, respectively. For visibility, the surfaces displayed are the $30\times30$ arrays uniformly sampled from numerical solutions whose values heve been clipped in the $[0,250]$ range. Absolute errors of $\mathcal Y_t^{t,x}$ at $(t,x)=(0,100)$, maximum absolute errors of $\mathcal Y_t^{t,x}$ in $(t,x)\in \Pi_t^{\mathrm{Unif}}\times([80,120]\cap\Pi_x^{\mathrm{TR}})$ and runtime in seconds are reported in Table \ref{tab:DP5}. These results epitomize how stiff systems arise from the spatial discretization of parabolic PDEs and prevent explicit solvers from calculating solutions efficiently.

\begin{figure}[ht]
	\begin{tabular}{cc}
		\begin{minipage}[t]{0.45\textwidth}
			\centering
			\includegraphics[width=0.8\textwidth]{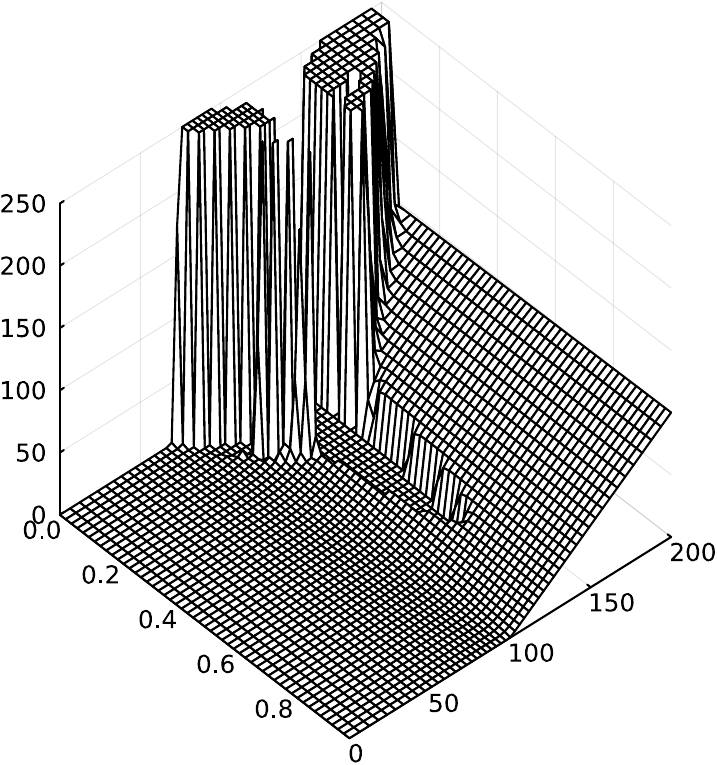}
			\subcaption{$N_t = 57990$.}
		\end{minipage}
		\begin{minipage}[t]{0.45\textwidth}
			\centering
			\includegraphics[width=0.8\textwidth]{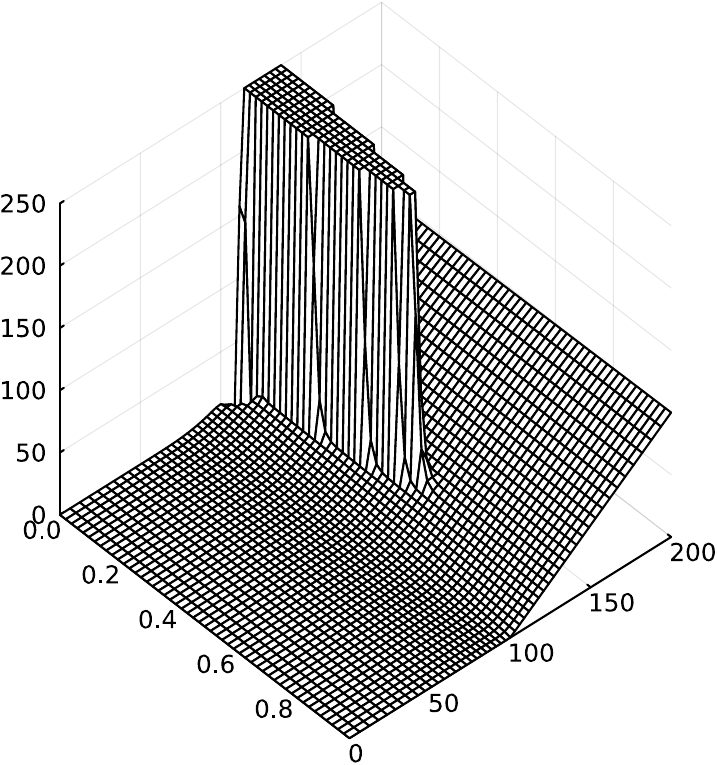}
			\subcaption{$N_t = 57995$.}
		\end{minipage}\\
		\begin{minipage}[t]{0.45\textwidth}
			\centering
			\includegraphics[width=0.8\textwidth]{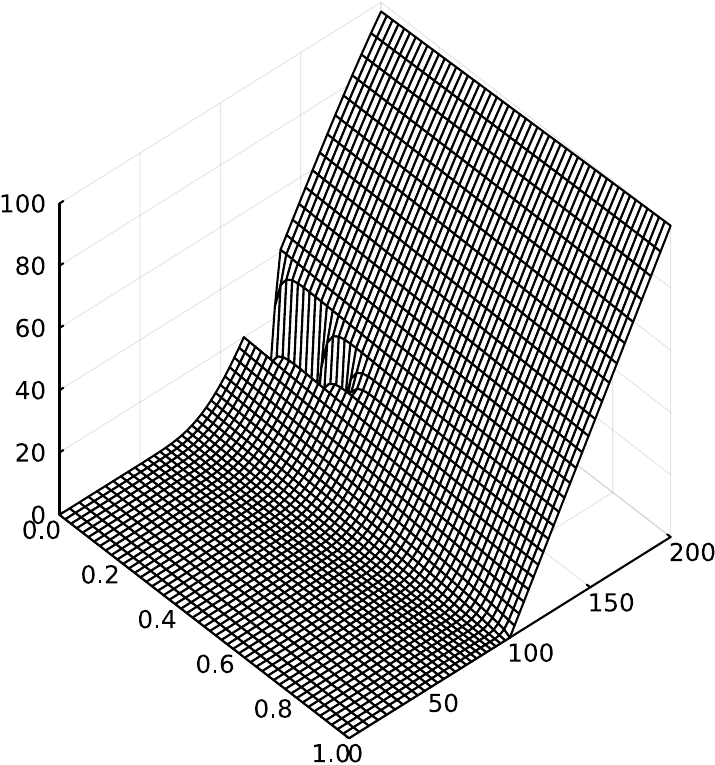}
			\subcaption{$N_t = 58000$.}
		\end{minipage}
		\begin{minipage}[t]{0.45\textwidth}
			\centering
			\includegraphics[width=0.8\textwidth]{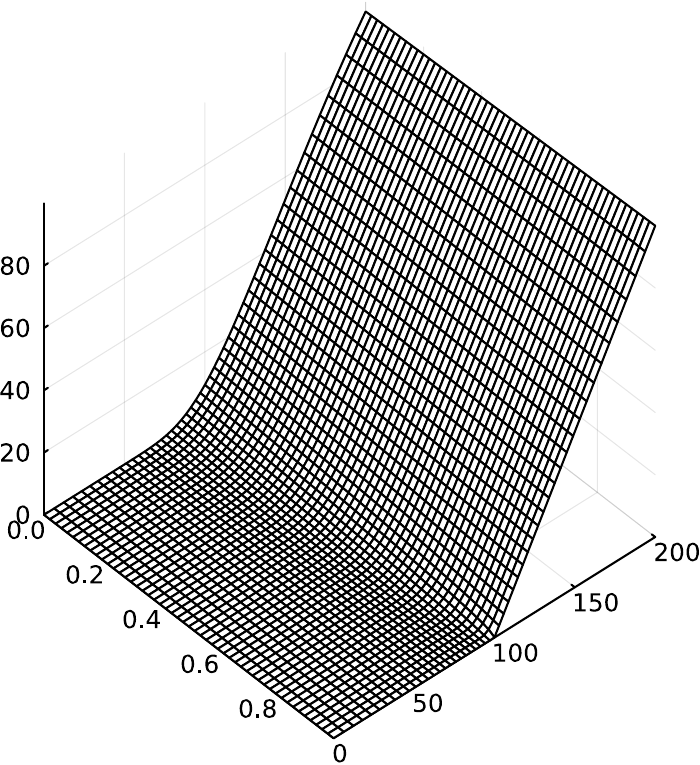}
			\subcaption{$N_t = 58005$.}
		\end{minipage}
	\end{tabular}
	\caption{Plots of numerical solutions of \eqref{sec:exp1BSlinear} using DP5 for different $N_t$. The spatial grid is $\Pi_x^{\mathrm{TR}}$ whose parameters are $x_{\mathrm{left}}=0$, $x_{\mathrm{center}}=100$, $x_{\mathrm{right}}=200$, $N_{x,0}=1000$, and $g_1=g_2=50$. For visibility, data plotted are $50\times 50$ arrays uniformly sampled and clipped into range $[0,250]$.}
	\label{fig:1}
\end{figure}

\begin{table}[htb]
	\centering
	\begin{tabular}{c|cccc}
		\hline
		& $N_t=57990$ & $N_t=57995$ & $N_t=58000$ & $N_t=58005$\\\hline
		Sup Error & $3.304\mathrm{e}{+31}$ & $1.727\mathrm{e}{+18}$ & $1.218\mathrm{e}{+5}$ & $1.326\mathrm{e}{-2}$\\
		Abs Error & $1.989\mathrm{e}{+12}$ & $1.324\mathrm{e}{+3}$ & $4.612\mathrm{e}{-4}$ & $4.612\mathrm{e}{-4}$\\
		Runtime [s] & $600.51$ & $629.94$ & $665.08$ & $668.86$\\\hline
	\end{tabular}
	\caption{Results on numerical solutions of \eqref{eq:exp1BSlinear} using DP5 for different $N_t$. Here, we spatially discretize \eqref{eq:exp1BSlinear} on $\Pi_x^{\mathrm{TR}}$ and solve the resulting system of ODEs. Here, the parameters of $\Pi_x^{\mathrm{TR}}$ are $x_{\mathrm{left}}=0$, $x_{\mathrm{center}}=100$, $x_{\mathrm{right}}=200$, $N_{x,0}=1000$, and $g_1=g_2=50$. For each $N_t$, the numerical solution is evaluated on the grid $\Pi_t^{\mathrm{Unif}}(N_t)\times\Pi_x^{\mathrm{TR}}$. The maximum absolute errors in $\Pi_t^{\mathrm{Unif}}\times([80,120]\cap\Pi_x^{\mathrm{TR}})$ are reported on the row of ``Sup Error'', the absolute errors at $(t,x)=(0,100)$ are on the row of ``Abs Error'', and the runtimes in seconds are at the bottom line.}
	\label{tab:DP5}
\end{table}

Let us solve the system of ODEs constructed from above using exponential integrators. Table \ref{table:1} reports the results of \verb+HochOst4+, a $4$-stage exponential integrator of order $4$, for different temporal steps $N_t$. It successfully calculates solutions without suffering from huge errors as appearing in \verb+DP5+, and provides accurate solutions with fewer temporal steps; for example, the solution calculated using \verb+HochOst4+ with $N_t\ge 200$ has achieved the same level of accuracy as the one using \verb+DP5+ with $N_t=58005$.
\begin{table}[htb]
    \centering
		\resizebox{\textwidth}{!}{
    \begin{tabular}{c|ccccccc}
        \hline
        & $N_t=10$ & $N_t=20$ & $N_t=50$ & $N_t=100$ & $N_t=200$ & $N_t=500$ & $N_t=1000$\\
        \hline
        Sup Error & $5.102\mathrm{e}{-1}$ & $1.374\mathrm{e}{-1}$ & $1.384\mathrm{e}{-2}$ & $1.326\mathrm{e}{-2}$ & $1.326\mathrm{e}{-2}$ & $1.326\mathrm{e}{-2}$ & $1.326\mathrm{e}{-2}$ \\
        Abs Error & $2.611\mathrm{e}{-1}$ & $2.638\mathrm{e}{-2}$ & $1.092\mathrm{e}{-3}$ & $4.693\mathrm{e}{-4}$ & $4.612\mathrm{e}{-4}$ & $4.612\mathrm{e}{-4}$ & $4.612\mathrm{e}{-4}$ \\
        Runtime[s] & $1.56$ & $2.33$ & $5.43$ & $12.21$ & $19.73$ & $49.26$ & $96.63$\\\hline
    \end{tabular}}
    \caption{Results on numerical solutions of \eqref{sec:exp1BSlinear} using HochOst4 in the same situation in Table \ref{tab:DP5}.}
    \label{table:1}
\end{table}

\subsection{European call option with higher interest rate for borrowing}
\label{sec:NonlinearBS}
Next, consider a nonlinear version of \eqref{eq:exp1BSlinear}:
\begin{equation}
	\begin{split}
	\mathcal S_t &= s_0 + \int_0^t\mu\mathcal S_sds + \int_0^t\sigma\mathcal S_sdW_s,\\
	\mathcal Y_t &= (\mathcal S_T-K)^+ -\int_t^T[r(\mathcal Y_s-\frac{\mathcal Z_s}\sigma)^+-R(\mathcal Y_s-\frac{\mathcal Z_s}\sigma)^-+\frac\mu\sigma\mathcal Z_s]ds-\int_t^T \mathcal Z_sdW_s,
	\end{split}
	\label{eq:exp2BSnonlinear}
\end{equation}
which comes from considering a more practical case that the borrowing interest rate $R$ is higher than the lending interest rate $r$. Here, $(x)^+ = \max\{x,0\}$ and $(x)^- = \min\{x,0\}$ for $x\in\mathbb R$. For details on the derivation of \eqref{eq:exp2BSnonlinear}, Section 4.5.1. (p.91) in \cite{Zhang2017}. We choose the parameters of \eqref{eq:exp2BSnonlinear} as follows:
\[
	\begin{tabular}{cccccc}
	  $T$ & $\mu$ & $\sigma$ & $R$ & $r$ & $K$ \\\hline
	  $1.0$ & $0.03$ & $0.2$ & $0.3$ & $0.01$ & $100$
  \end{tabular}
\]
We discretize \ref{eq:exp2BSnonlinear} on the Tavella-Randall grid $\Pi_x^{\mathrm{TR}}$ with $x_{\mathrm{left}}=0$, $x_{\mathrm{center}}=100$, $x_{\mathrm{right}}=200$, $N_{x,0}=1000$, and $g_1=g_2=50$, which is the same as before. Unlike the previous experiment, however, we cannot evaluate numerical errors, since we do not know the analytical solution of \eqref{eq:exp2BSnonlinear}. We only focus on numerical solutions of $\mathcal Y_t = \mathcal Y_t^{t,s}$ at $(t,s)=(0,100)$. Table \ref{tab:nonlinear1d} reports numerical solutions using different exponential integrators and the runtimes in second. For each scheme, when $N_t$ increases, the numerical solution seems to converge approximately $\mathcal Y_0^{0,100}\approx 26.3305$. We observe that multi-stage methods (ETD2RK, ETD3RK, ETD4RK, and HochOst4) converge faster than single-stage methods (Lawson-Euler and N\o rsett-Euler). As a reference, we calculate the solution $\mathcal Y_0^{0,100}$ using the LSMC method with Laguerre polynomials \eqref{eq:Laguerre} up to $p$-th order for different parameters. Here, the number of Monte Carlo simulations is $M=2^{22}$, and the pair $(N_t,p)$ of the number of temporal steps $N_t$ and the maximal order $p$ are taken from $\{(6,6),(7,7),(8,8),(9,9)\}$. The LSMC solution of $\mathcal Y_0^{0,100}$ is evaluated 50 times independently, and we compute the sample mean, the unbiased standard deviation and the total runtime. The results of the numerical solution of the LSMC method are reported in Table \ref{tab:lsmc1}. 

\begin{table}[!ht]
    \centering
    \begin{tabular}{c|c||ccccc}
    \hline
	\multicolumn{2}{c}{} & $N_t=10$ & $N_t=20$ & $N_t=50$ & $N_t=100$ & $N_t=200$ \\\hline
	\multirow{2}*{\shortstack[c]{Lawson-Euler\\\cite{Lawson1967}}} & $\mathcal Y_0^{0,s_0}$ & $25.0377$ & $26.3522$ & $26.3545$ & $26.3437$ & $26.3372$\\
	& Runtime [s] & $5.26$ & $0.61$ & $1.52$ & $2.81$ & $5.40$\\ \hline
	\multirow{2}*{\shortstack[c]{N\o rsett-Euler\\\cite{Cox2002}}} & $\mathcal Y_0^{0,s_0}$ & $26.0777$ & $26.4527$ & $26.3881$ & $26.3597$ & $26.3452$ \\
	& Runtime [s] & $1.25$ & $0.53$ & $1.30$ & $2.51$ & $5.23$\\ \hline
	\multirow{2}*{\shortstack[c]{ETD2RK\\\cite{Cox2002}}} & $\mathcal Y_0^{0,s_0}$ & $25.8658$ & $26.3182$ & $26.3310$ & $26.3307$ & $26.3306$\\
	& Runtime [s] & $1.51$ & $1.27$ & $2.89$ & $5.85$ & $10.78$ \\ \hline
	\multirow{2}*{\shortstack[c]{ETD3RK\\\cite{Cox2002}}} & $\mathcal Y_0^{0,s_0}$ & $26.1212$ & $26.3137$ & $26.3302 $ & $26.3305 $ & $26.3305$\\
	& Runtime [s] & $1.78$ & $1.71$ & $3.81$ & $8.02$ & $16.68$\\ \hline
	\multirow{2}*{\shortstack[c]{ETD4RK\\\cite{Cox2002}}} & $\mathcal Y_0^{0,s_0}$ & $26.0947$ & $26.3136$ & $26.3302$ & $26.3305$ & $26.3305$ \\
	& Runtime [s] & $3.10$ & $2.72$ & $6.47$ & $14.35$ & $27.32$ \\ \hline
	\multirow{2}*{\shortstack[c]{HochOst4\\\cite{Hochbruck2005}}} & $\mathcal Y_0^{0,s_0}$ & $26.1080$ & $26.3136$ & $26.3302$ & $26.3305$ & $26.3305$ \\
	& Runtime [s] & $2.28$ & $2.79$ & $6.84$ & $14.27$ & $29.64$ \\ \hline
    \end{tabular}
    \caption{
    Numerical solutions $\mathcal Y_0^{0,100}$ and its runtime in seconds using different exponential integrators. Here, we spatially discretize \eqref{eq:exp2BSnonlinear} on $\Pi_x^{\mathrm{TR}}$ and solve the resulting system of ODEs. Here, the parameters of $\Pi_x^{\mathrm{TR}}$ are $x_{\mathrm{left}}=0$, $x_{\mathrm{center}}=100$, $x_{\mathrm{right}}=200$, $N_{x,0}=1000$, and $g_1=g_2=50$.}
    \label{tab:nonlinear1d}
\end{table}

\begin{table}[!ht]
    \centering
    \begin{tabular}{c|c||cccc}
    \hline
    		\multicolumn{1}{c}{$M$} & \multicolumn{1}{c}{} & $(N_t,p)=(6,6)$ & $(N_t,p)=(7,7)$ & $(N_t,p)=(8,8)$ & $(N_t,p)=(9,9)$\\ \hline
        \multirow{3}*{$4194304$}& mean & $26.1462$ & $26.2239$ & $26.3001$ & $26.3404$ \\
        & std & $0.0457$ & $0.0386$ & $0.0611$ & $0.0580$ \\
        & Runtime [s] & $1436.79$ & $1706.81$ & $2021.94$ & $2340.43$\\ \hline
    \end{tabular}
    \caption{Results on numerical solutions $\mathcal Y_0^{0,100}$ of \eqref{eq:exp2BSnonlinear} using the LSMC methods with Laguerre polynomials up to $p$-th order for $N_t$ time steps. Here, $M=4194304$ is the number of samples for the Monte-Carlo approximation. We run the LSMC algorithm $50$ times independently and collect each $\mathcal Y_0^{0,100}$. The row of ``mean'' and ``std'' reports their sample means and sample (unbiased) standard deviations, respectively. The bottom of the table reports the total CPU times required for the $50$ experments.}
    \label{tab:lsmc1}
\end{table}

\subsection{European options under stochastic local volatility models}
In this section, we consider BSDEs arising from the valuation of European options in stochastic local volatility (SLV) models, a class of Markov BSDEs with 2-dimensional state processes. A general form of stochastic volatility models is given in \cite{Cui2018} as 
\begin{equation}
	\label{eq:SLV}
	\begin{cases}
		\displaystyle \mathcal S_t = S_0 + \int_0^t\omega(\mathcal S_s, v_s)ds + \int_0^tm(v_s)\Gamma(\mathcal S_s)dW_s^{(1)},\\
		\displaystyle v_t = v_0 + \int_0^t\mu(v_s)ds + \int_0^t\sigma(v_s)dW_s^{(2)},
	\end{cases}
\end{equation}
where $\langle W^{(1)},W^{(2)}\rangle_t = \rho t$ with $\rho\in(-1,1)$. Let $\mathbf L=\begin{pmatrix}1&0\\\rho&\sqrt{1-\rho^2}\end{pmatrix}$ be the lower triangular matrix constructed from the Cholesky decomposition $\mathbf C = \mathbf L\mathbf L^\ast$ of $\mathbf C = \begin{pmatrix}1&\rho\\\rho&1\end{pmatrix}$. Since $\mathbf W = \mathbf L^{-1}\begin{pmatrix}W^{(1)}\\W^{(2)}\end{pmatrix}$ is a 2-dimensional standard Brownian motion, we can reformulate \eqref{eq:SLV} as 
\[
	\mathcal X_t = \mathcal X_0 + \int_0^t\boldsymbol\mu(\mathcal X_s)ds + \int_0^t\boldsymbol\sigma(\mathcal X_s)d\mathbf W_s\quad \text{for}\quad t\in[0,T],
\]
where 
\[
	\mathcal X_t = \begin{pmatrix}\mathcal S_t\\v_t\end{pmatrix},\quad \boldsymbol\mu(x) = \begin{pmatrix}\omega(x_1,x_2)\\\mu(x_2)\end{pmatrix},\quad \boldsymbol\sigma(x) = \begin{pmatrix}m(x_2)\Gamma(x_1) & 0\\0 &\sigma(x_2)\end{pmatrix}\mathbf L.
\]

Similarly to the arguments in Section 4.5.1. (p.91) in \cite{Zhang2017}, we formulate Markov BSDEs describing the price of European options. Consider the self-financing portfolio $\boldsymbol\Delta_t=\begin{pmatrix}\Delta_t^1\\\Delta_t^2\end{pmatrix}$ consisting of $\Delta_t^1$ assets of $\mathcal S_t$ and $\Delta_t^2$ assets of $v_t$ and bonds with borrowing rate $R$ and lending rate $r$. Let $\mathcal V_t$ be the wealth dynamics of $\boldsymbol\Delta_t$. The self-financing condition reads 
\[
	d\mathcal V_t = [r(\mathcal V_t-\boldsymbol\Delta_t^\ast\mathcal X_t)^+-R(\mathcal V_t-\boldsymbol\Delta_t^\ast\mathcal X_t)^-]dt+\boldsymbol\Delta_t^\ast d\mathcal X_t.
\]
Consider that $\boldsymbol\Delta_t$ hedges the European option with payoff $g(\mathcal S_T)$. Denoting $\mathcal Y_t \coloneqq\mathcal V_t$ and $\mathcal Z_t \coloneqq \boldsymbol\Delta_t^\ast\boldsymbol\sigma(\mathcal X_t)$, we obtain
\begin{equation}
	\label{eq:SLVBSDE}
	\mathcal Y_t = g(\mathcal S_T)-\int_t^Tf(s,\mathcal X_s,\mathcal Y_s,\mathcal Z_s)ds -\int_t^T\mathcal Z_s^\ast d\mathbf W_s,
\end{equation}
where the driver is 
\[
	f(t,x,y,z)=r(y-z^\ast\boldsymbol\sigma(x)^{-1}x)^+-R(y-z^\ast\boldsymbol\sigma(x)^{-1}x)^-+z^\ast\boldsymbol\sigma(x)^{-1}\boldsymbol\mu(x).
\]
Then, $\mathcal Y_t^{t,\mathcal X_t}$ is the price of the European option.

Before moving on to the specific results, we note what follows: (1) Unlike the cases considered in the previous two experiments, we need to discretize the two-dimensional process $\mathcal X_t = (\mathcal S_t,v_t)$. To this end, we construct the Kronecker product of two grids as described in Section \ref{sec:d-MOL}. Specifically, $\mathcal S_t$ is discretized on the Tavella-Randall grid $\Pi_x^{\mathrm{TR}}$ and $v_t$ is on the uniform grid $\Pi_x^{\mathrm{Unif}}$. (2) BSDE \eqref{eq:SLVBSDE} contains the evaluation of $\boldsymbol\sigma(x)^{-1}$, and we should set each grid carefully; if $\Gamma(x)$, $m(x)$ or $\sigma(x)$ take zero at $x=0$, grid \eqref{eq:grid2} should be chosen rather than grid \eqref{eq:grid1}. (3) We calculate numerical solutions using the LSMC method for reference. Since the Euler-Maruyama discretization paths can take negative values, coefficient functions defined only on $(0,\infty)$ (e.g. the square root) may fail to evaluate. To avoid it, the absolute value is taken under such coefficient functions.

\subsubsection{European put opton under the Heston-SABR model with higher interest rate for borrowing}
\label{sec:HestonSABR}
The Heston-SABR SLV model \cite{VanderStoep2014} takes the following form.
\begin{equation}
	\begin{split}
	\mathcal S_t &= \mathcal S_0 + \int_0^tb\cdot\mathcal S_sds + \int_0^t\sqrt{v_s}\mathcal S_s^\beta dW_s^{(1)}\\
	v_t &= v_0 + \int_0^t\eta(\theta-v_s)ds + \int_0^t\alpha\sqrt{v_s}dW_s^{(2)}
	\end{split}
	\label{eq:HestonSABR}
\end{equation}
We apply \eqref{sec:HestonSABR} to \eqref{eq:SLV}, and consider BSDE \eqref{eq:SLVBSDE} with $g(s) = (K-s)^+$ a payoff function of a European put option. We choose the parameters of \eqref{sec:HestonSABR} as follows:
\[
\begin{tabular}{cccccccccc}
  $T$ & $\beta$ & $\eta$ & $\theta$ & $\alpha$ & $\rho$ & $b$ & $K$ & $R$ & $r$ \\\hline
  $1.0$ & $0.7$ & $4.0$ & $0.035$ & $0.15$ & $-0.75$ & $0.01$ & $100$ & $0.07$ & $0.01$
\end{tabular}
\]

In this experiment, we are interested in numerical solutions of $\mathcal Y_0 = \mathcal Y_0^{0, (s_0,v_0)}$ at $(s_0,v_0) = (100, 0.4)$. As noted before, since the driver of \eqref{eq:SLVBSDE} contains the evaluation of inverses of $\sqrt{v_t}\mathcal S_t^\beta$ and $\alpha\sqrt{v_s}$, we need to design the spatial grid to contain only points in the first quadrant. To this end, we set the spatial grid onto $[\Delta_1,2s_0-\Delta_1]\times[\Delta_2,2v_0-\Delta_2]$, where $\Delta_1 = s_0/N_0^{(1)}$, $\Delta_2 = v_0/N_0^{(2)}$. We take $(N_0^{(1)},N_0^{(2)})=(100, 15)$, and the resulting spatial grid $\Pi_x^{\mathrm{TR}}\otimes\Pi_x^{\mathrm{Unif}}$ contains $(2\times100+1)\times(2\times15+1) = 6231$ points in the reculangular domain $[100/100,200-100/100]\times[0.4/15,0.8-0.4/15](\approx[1,199]\times[0.0267,0.7733]$.) For $g_1$ and $g_2$, we take $g_1=g_2=1$.

Numerical solutions using exponential integrators are reported in Table \ref{tab:MOLHestonSABR}. As $N_t$ increases, the solutions seem to converge approximately $\mathcal Y_0^{0,(100,0.4)}\approx 5.6394$. As before, faster convergence of multi-stage methods than 1-stage methods has been confirmed. Numerical solutions using the LSMC method is given in Table \ref{tab:lsmcHestonSABR}. 

\begin{table}[!ht]
    \centering
    \begin{tabular}{c|c||ccccc}
    \hline
    		\multicolumn{2}{c}{}& $N_t=10$ & $N_t=20$ & $N_t=50$ & $N_t=100$ & $N_t=200$\\\hline
        \multirow{2}*{\shortstack[c]{Lawson-Euler\\\cite{Lawson1967}}} & $\mathcal Y_0^{0,(s_0,v_0)}$ & $4.7985$ & $5.5605$ & $5.6015$ & $5.6211$ & $5.6302$ \\
        & Runtime [s] & $22.79$ & $29.70$ & $66.15$ & $129.13$ & $255.58$ \\ \hline
        \multirow{2}*{\shortstack[c]{N\o rsett-Euler\\\cite{Cox2002}}} & $\mathcal Y_0^{0,(s_0,v_0)}$ & $4.9076$ & $5.5662$ & $5.6234$ & $5.6326$ & $5.6360$ \\
        & Runtime [s] & $14.71$ & $26.26$ & $61.46$ & $116.11$ & $231.01$ \\ \hline
        \multirow{2}*{\shortstack[c]{ETD2RK\\\cite{Cox2002}}} & $\mathcal Y_0^{0,(s_0,v_0)}$ & $4.9348$ & $5.6006$ & $5.6368$ & $5.6393$ & $5.6394$ \\
        & Runtime [s] & $26.15$ & $43.70$ & $112.08$ & $221.11$ & $443.11$ \\ \hline
        \multirow{2}*{\shortstack[c]{ETD3RK\\\cite{Cox2002}}} & $\mathcal Y_0^{0,(s_0,v_0)}$ & $4.9329$ & $5.6014$ & $5.6369$ & $5.6393$ & $5.6394$ \\
        & Runtime [s] & $33.04$ & $66.37$ & $159.77$ & $341.57$ & $724.57$ \\ \hline
        \multirow{2}*{\shortstack[c]{ETD4RK\\\cite{Cox2002}}} & $\mathcal Y_0^{0,(s_0,v_0)}$ & $4.9521$ & $5.6015$ & $5.6369$ & $5.6393$ & $5.6394$ \\
        & Runtime [s] & $63.90$ & $121.81$ & $293.59$ & $566.32$ & $1150.64$ \\ \hline
        \multirow{2}*{\shortstack[c]{HochOst4\\\cite{Hochbruck2005}}} & $\mathcal Y_0^{0,(s_0,v_0)}$ & $4.9328$ & $5.6013$ & $5.6369$ & $5.6393$ & $5.6394$ \\
        & Runtime [s] & $60.23$ & $114.64$ & $285.99$ & $551.47$ & $1117.35$ \\ \hline
    \end{tabular}
    \caption{
    Numerical solutions $\mathcal Y_0^{0,(100,0.4)}$ and its runtime in seconds using different exponential integrators. Here, we spatially discretize \eqref{eq:SLVBSDE} on $\Pi_x^{\mathrm{TR}}\otimes\Pi_x^{\mathrm{Unif}}$ and solve the resulting system of ODEs. Here, the parameters of $\Pi_x^{\mathrm{TR}}$ are $(x_{\mathrm{left}},x_{\mathrm{center}},x_{\mathrm{right}},N_{x,0},g_1,g_2)=(1,100,199,100,1,1)$, and of $\Pi_x^{\mathrm{Unif}}$ are $(x_{\mathrm{left}},x_{\mathrm{center}},x_{\mathrm{right}},N_{x,0}) \approx (0.0267, 0.4, 0.7733, 15)$.}
    \label{tab:MOLHestonSABR}
\end{table}

\begin{table}[!ht]
    \centering
    \begin{tabular}{c|c||cccc}
    \hline
        \multicolumn{1}{c}{$M$} &  & $(N,p)=(6,6)$ & $(N,p)=(7,7)$ & $(N,p)=(8,8)$ & $(N,p)=(9,9)$ \\ \hline
        \multirow{3}*{$1048576$} & mean & $5.5956$ & $5.6132$ & $5.6247$ & $5.6249$ \\
         & std & $0.0153$ & $0.0153$ & $0.0179$ & $0.0192$ \\
         & Runtime [s] & $3250.84$ & $4737.51$ & $6788.49$ & $9863.74$ \\\hline
    \end{tabular}
    \caption{
    Results on numerical solutions $\mathcal Y_0^{0,(100,0.4)}$ of \eqref{eq:SLVBSDE} under the Heston-SABR model \eqref{eq:HestonSABR} using the LSMC methods with Laguerre polynomials up to $p$-th order for $N_t$ time steps. Here, $M=2^20=1048576$ is the number of samples for the Monte-Carlo approximation. We run the LSMC algorithm $50$ times independently and collect each $\mathcal Y_0^{0,(100,0.4)}$. The row of ``mean'' and ``std'' reports their sample means and sample (unbiased) standard deviations, respectively. The bottom of the table reports the total CPU times required for the $50$ experments.}
    \label{tab:lsmcHestonSABR}
\end{table}

\subsubsection{Calls combination with different interest rates under the Hyp Hyp SLV model}
\label{sec:HypHyp}
Consider the Hyp Hyp SLV model \cite{Jaeckel2007} defined as:
\begin{equation}
\begin{split}
	\mathcal S_t &= \mathcal S_0 + \int_0^tb\cdot\mathcal S_sds + \int_0^t\sigma_0\cdot F(\mathcal S_s)G(v_s)dW_s^{(1)},\\
	v_t &= v_0 - \int_0^t\kappa\cdot v_sds + \int_0^t\alpha\sqrt{2\kappa}dW_s^{(2)},
\end{split}
\label{eq:HypHyp}
\end{equation}
where 
\begin{align*}
	F(x) &= \left[(1-\beta+\beta^2)\cdot x + (\beta-1)\cdot(\sqrt{x^2+\beta^2(1-x)^2}-\beta)\right]/\beta,\\
	G(v) &= v +\sqrt{v^2+1}.
\end{align*}
We apply \eqref{eq:HypHyp} to \eqref{eq:SLV}, and consider BSDE \eqref{eq:SLVBSDE} with 
\[
	g(s) = (s-95)^+-2(s-105)^+,
\]
which is a payoff function of a combination of two European call options. We choose the parameters of \eqref{eq:HypHyp} as follows:
\[
\begin{tabular}{cccccccccc}
  $T$ & $\beta$ & $\kappa$ & $\sigma_0$ & $\alpha$ & $\rho$ & $b$ & $K$ & $R$ & $r$ \\\hline
  $1.0$ & $0.25$ & $0.5$ & $0.25$ & $0.3$ & $0.8$ & $0.04$ & $100$ & $0.06$ & $0.006$
\end{tabular}
\]
We discretize $\mathcal X_t$ in the same way of Section \ref{sec:HestonSABR} except the discretization of $\mathcal S_t$; because the payoff function $g$ is non-smooth at $s=95$ and $s=105$, we should employ the concatenation of two Tavella-Randall grids $\Pi_x^{\mathrm{TR}}$ with $(x_{\mathrm{left}},x_{\mathrm{center}},x_{\mathrm{right}})=(1,95,100)$ and $(x_{\mathrm{left}},x_{\mathrm{center}},x_{\mathrm{right}})=(100,105,199)$; the other parameters are commonly taken as $N_{x,0} = N_0^{(1)}/2=50$ and $g_1=g_2=1$.

Results on numerical solutions using exponential integrators are shown in Table \ref{tab:MOLHypHyp}. Numerical solutions of $\mathcal Y_0^{0,(s_0,v_0)}$ at $(s_0,v_0)=(95,0.4),(100,0.4),(105,0.4)$ have been reported; as $N_t$ increases, they seem to converge towards
\[
	\mathcal Y_0^{0,(95,0.4)}\approx 4.4061,\quad \mathcal Y_0^{0,(100,0.4)}\approx5.8218,\quad \text{and}\quad \mathcal Y_0^{0,(105,0.4)}\approx5.6394.
\]
Numerical solutions using the LSMC method are presented in Table \ref{tab:lsmcHypHyp}.

\begin{table}[!ht]
    \centering
    \begin{tabular}{c|c||ccccc}
    \hline
    		\multicolumn{2}{c||}{}& $N_t=10$ & $N_t=20$ & $N_t=50$ & $N_t=100$ & $N_t=200$\\\hline
        \multirow{4}*{\shortstack[c]{Lawson-Euler\\\cite{Lawson1967}}} & $\mathcal Y_0^{0,(95,0.4)}$ & $4.5589$ & $4.4829$ & $4.4368$ & $4.4214$ & $4.4137$ \\
        & $\mathcal Y_0^{0,(100,0.4)}$ & $6.0971$ & $5.9595$ & $5.8768$ & $5.8492$ & $5.8355$ \\
        & $\mathcal Y_0^{0,(105,0.4)}$ & $5.9113$ & $5.7756$ & $5.6938$ & $5.6665$ & $5.6529$ \\
        & Runtime[s] & $15.32$ & $24.69$ & $57.58$ & $113.83$ & $231.95$ \\ \hline
        \multirow{4}*{\shortstack[c]{N\o rsett-Euler\\\cite{Cox2002}}}& $\mathcal Y_0^{0,(95,0.4)}$ & $4.5561$ & $4.4787$ & $4.4343$ & $4.4200$ & $4.4130$ \\
        & $\mathcal Y_0^{0,(100,0.4)}$ & $5.9936$ & $5.9035$ & $5.8531$ & $5.8371$ & $5.8293$ \\
        & $\mathcal Y_0^{0,(105,0.4)}$ & $5.7821$ & $5.707 $& $5.6652 $& $5.6520$ & $5.6456$\\
        & Runtime[s] & $13.52$ & $22.99$ & $52.94$ & $108.18$ & $198.73$ \\ \hline
        \multirow{4}*{\shortstack[c]{ETD2RK\\\cite{Cox2002}}}& $\mathcal Y_0^{0,(95,0.4)}$ & $4.4217$ & $4.4115$ & $4.4074$ & $4.4065$ & $4.4063$ \\
        & $\mathcal Y_0^{0,(100,0.4)}$ & $5.8444$ & $5.8294$ & $5.8236$ & $5.8224$ & $5.8220$ \\
        & $\mathcal Y_0^{0,(105,0.4)}$ & $5.6631$ & $5.6473$ & $5.6412$ & $5.6400$ & $5.6396$ \\
        & Runtime[s] & $22.80$ & $41.33$ & $99.87$ & $206.08$ & $419.02$ \\ \hline
        \multirow{4}*{\shortstack[c]{ETD3RK\\\cite{Cox2002}}}& $\mathcal Y_0^{0,(95,0.4)}$ & $4.4082$ & $4.4068$ & $4.4063$ & $4.4062$ & $4.4061$ \\
        & $\mathcal Y_0^{0,(100,0.4)}$ & $5.8248$ & $5.8227$ & $5.8220$ & $5.8218$ & $5.8218$ \\
        & $\mathcal Y_0^{0,(105,0.4)}$ & $5.6424$ & $5.6403$ & $5.6396$ & $5.6395$ & $5.6394$ \\
        & Runtime[s] & $36.45$ & $64.68$ & $157.76$ & $327.25$ & $631.46$ \\ \hline
        \multirow{4}*{\shortstack[c]{ETD4RK\\\cite{Cox2002}}}& $\mathcal Y_0^{0,(95,0.4)}$ & $4.4080$ & $4.4068$ & $4.4063$ & $4.4062$ & $4.4061$ \\
        & $\mathcal Y_0^{0,(100,0.4)}$ & $5.8243$ & $5.8226$ & $5.8220$ & $5.8218$ & $5.8218$ \\
        & $\mathcal Y_0^{0,(105,0.4)}$ & $5.6421$ & $5.6403$ & $5.6396$ & $5.6395$ & $5.6394$ \\
        & Runtime[s] & $59.71$ & $113.26$ & $257.03$ & $499.43$ & $1060.09$ \\ \hline
        \multirow{4}*{\shortstack[c]{HochOst4\\\cite{Hochbruck2005}}}& $\mathcal Y_0^{0,(95,0.4)}$ & $4.4080$ & $4.4068$ & $4.4063$ & $4.4062$ & $4.4061$ \\
        & $\mathcal Y_0^{0,(100,0.4)}$ & $5.8244$ & $5.8226$ & $5.8220$ & $5.8218$ & $5.8218$ \\
        & $\mathcal Y_0^{0,(105,0.4)}$ & $5.6421$ & $5.6403$ & $5.6396$ & $5.6395$ & $5.6394$ \\
        & Runtime[s] & $59.85$ & $113.38$ & $275.02$ & $544.55$ & $1043.06$ \\ \hline
    \end{tabular}
    \caption{Numerical solutions $\mathcal Y_0^{0,(s_0,v_0)}$ at $(s_0,v_0)=(95,0.4),(100,0.4),(105,0.4)$ of \eqref{eq:HypHyp} and its runtime in seconds using different exponential integrators. Here, we spatially discretize \eqref{eq:SLVBSDE} on $\Pi_x^{\mathrm{TR}}\otimes\Pi_x^{\mathrm{Unif}}$, and solve the resulting system of ODEs. We employ concatenation of two Tavella-Randall grids $\Pi_x^{\mathrm{TR}}$ with $(x_{\mathrm{left}},x_{\mathrm{center}},x_{\mathrm{right}})=(1,95,100)$ and $(x_{\mathrm{left}},x_{\mathrm{center}},x_{\mathrm{right}})=(100,105,199)$ in the $\mathcal S_t$-direction. In both grids, we take $N_{x,0}=50$ and $g_1=g_2=1$.}
    \label{tab:MOLHypHyp}
\end{table}

\begin{table}[!ht]
    \centering
    \begin{tabular}{c|c|c||cccc}
    \hline
        \multicolumn{1}{c}{$M$} & \multicolumn{1}{c}{$(\mathcal S_0,v_0)$} & \multicolumn{1}{c}{} & $(N,p)=(6,6)$ & $(N,p)=(7,7)$ & $(N,p)=(8,8)$ & $(N,p)=(9,9)$\\ \hline
        \multirow{7}*{$1048576$} & \multirow{2}*{$(95,0.4)$} & mean & $4.4666$  & $4.4503$  & $4.4448$  & $4.4369$  \\
        & & std & $0.0138$  & $0.0140$  & $0.0145$  & $0.0178$  \\ \cmidrule{2-7}
        & \multirow{2}*{$(100,0.4)$} & mean & $5.7952$ & $5.7897$ & $5.7851$ & $5.7824$  \\
        & & std & $0.0200$ & $0.0207$ & $0.0196$ & $0.0253$  \\ \cmidrule{2-7}
        & \multirow{2}*{$(105,0.4)$} & mean & $5.6619$ & $5.6384$ & $5.6293$ & $5.6384$  \\
        & & std & $0.0250$ & $0.0299$ & $0.0350$ & $0.0331$  \\ \hline
        \multicolumn{3}{c||}{Avg Runtime [s]} & $3614.21$ & $5123.17$ & $7363.07$ & $8898.37$ \\ \hline
    \end{tabular}
    \caption{
    Results on numerical solutions $\mathcal Y_0^{0,(100,0.4)}$ of \eqref{eq:SLVBSDE} under the HypHyp SLV model \eqref{eq:HypHyp} at $(s_0,v_0)=(95,0.4),(100,0.4),(105,0.4)$ using the LSMC methods with Laguerre polynomials up to $p$-th order for $N_t$ time steps. Here, $M=2^{20}=1048576$ is the number of samples for the Monte-Carlo approximation. For each $(s_0,v_0)$, we run the LSMC algorithm $50$ times independently and collect each $\mathcal Y_0^{0,(s_0,v_0)}$. The row of ``mean'' and ``std'' reports their sample means and sample (unbiased) standard deviations, respectively. The bottom of the table reports the average of the total CPU times for the three cases.}
    \label{tab:lsmcHypHyp}
\end{table}

\subsection{European call option under the SABR model}
In this section, we consider two ways of spatial discretizations: (1) the discretization on the Kronecker product of grids (we shall call it a ``full grid''), the same as considered in all the previous experiments. (2) the multilevel discretization on a sparse grid presented in Section \ref{sec:SGcomb}. We calculate numerical solutions for each case using exponential integrators and then compare their performances.

The stochastic-alpha-beta-rho (SABR) model is a SLV model being commonly used, and designed to model price dynamics of a forward contract. Consider the forward price process $F_t = \mathcal S_te^{r(T-t)}$ for some asset price $\mathcal S_t$ and interest rate $r$. The SABR model assumes that $F_t$ satisfies the following SLV model
\begin{equation}
	\label{eq:SABR}
	F_t = F_0 + \int_0^tv_sF_s^\beta dW_s^{(1)},\quad v_t = v_0 + \int_0^t\alpha v_sdW_s^{(2)}.
\end{equation}
We consider pricing of the European call option under the SABR model. To this end, we additionally suppose that the volatility process $v_t$ also satisfies $v_t = v_t^0e^{r(T-t)}$ for some underlying asset $v_t^0$. Taking the hedge portofolio of two risky assets $\mathcal S_t$ and $v_t^0$ and a bond with riskless rate $r$ into account, the corresponding Makov BSDE \eqref{eq:SLVBSDE} becomes
\begin{equation}
	\label{eq:SABRBSDE}
	\mathcal Y_t = (F_T-K)^+-\int_t^Tr\mathcal Y_sds -\int_t^T\mathcal Z_sd\mathbf W_s
\end{equation}
with the state process \eqref{eq:SABR}. It has an approximation formula
\begin{equation}
	\label{eq:Hagan}
	\mathcal Y_t \approx \exp(-r(T-t))[F_t\cdot\Psi(d_1)-K\cdot\Psi(d_2)],
\end{equation}
where
\[
	d_1 = \frac{\ln\left(\frac{F_t}K\right)+\left(r+\frac{\sigma_B^2}2\right)(T-t)}{\sigma_B\sqrt{T-t}},\quad d_2 = \frac{\ln\left(\frac{F_t}K\right)+\left(r-\frac{\sigma_B^2}2\right)(T-t)}{\sigma_B\sqrt{T-t}},
\]
and $\sigma_B$ is the approximated implied volatility by Hagan et al. \cite{Hagan2002} defined as
\[
	\sigma_B = \frac{v_t\left\{1+\left[\frac{(1-\beta)^2}{24}\frac{v_t^2}{(F_tK)^{1-\beta}}+\frac14\frac{\rho\beta\alpha v_t}{(F_tK)^{(1-\beta)/2}}+\frac{2-3\rho^2}{24}\right](T-t)\right\}}{(F_tK)^{(1-\beta)/2}\left\{1+\frac{(1-\beta)^2}{24}\log^2(F_t/K)+\frac{(1-\beta)^4}{1920}\log^4(F_t/K)\right\}}\frac{z}{\chi(z)},
\]
where $z$ and $\chi(z)$ are
\[
	z = \frac\alpha{v_t}(F_tK)^{\frac{1-\beta^2}2}\log\left(\frac{F_t}K\right),\quad \chi(z) = \log\left(\frac{\sqrt{1-2\rho z+z^2}+z-\rho}{1-\rho}\right).
\]
We regard the solutions calculated using \eqref{eq:Hagan} as the exact solutions. We choose the parameters as follows:
\[
\begin{tabular}{cccccc}
  $T$ & $\alpha$ & $\beta$ & $\rho$ & $r$ & $K$ \\\hline
  $1.0$ & $0.4$ & $0.9$ & $0.3$ & $0.05$ & $100$
\end{tabular}
\]

\subsubsection{Spatial discretization on a ``full grid''}
\label{sec:SABRFG}
First, we discretize \eqref{eq:SABRBSDE} on the Kronecker product $\Pi_x^{\mathrm{TR}}\otimes\Pi_x^{\mathrm{Unif}}$ of grids similarly to the previous experiments. Precisely, $\mathcal S_t$ is discretized on the Tavella-Randall grid $\Pi_x^{\mathrm{TR}}$ with $(x_{\mathrm{left}},x_{\mathrm{center}},x_{\mathrm{right}},N_{x,0},g_1,g_2)=(0,100,200,100,5,5)$, and $v_t$ is discretized on the uniform grid $\Pi_x^{\mathrm{Unif}}$ with $(x_{\mathrm{left}},x_{\mathrm{center}},x_{\mathrm{right}},N_{x,0})=(0, 0.4, 0.8, 15)$. The size of the resulting grid is $6231$.
\begin{table}[ht]
	\centering
	\begin{tabular}{c|c||ccccc}
		\hline
		\multicolumn{2}{c}{}& $N_t=10$ & $N_t=20$ & $N_t=50$ & $N_t=100$ & $N_t=200$\\\hline
		\multirow{3}*{\shortstack[c]{Lawson-Euler\\\cite{Lawson1967}}} & Sup Error & $4.500\mathrm{e}{-1}$ & $9.640\mathrm{e}{-2}$ & $3.264\mathrm{e}{-2}$ & $3.245\mathrm{e}{-2}$ & $3.242\mathrm{e}{-2}$\\
		& Abs Error & $1.157\mathrm{e}{-1}$ & $1.538\mathrm{e}{-2}$ & $2.136\mathrm{e}{-3}$ & $1.853\mathrm{e}{-3}$ & $1.791\mathrm{e}{-3}$\\
		& Runtime[s] & $20.21 $ & $24.97 $ & $58.12 $ & $120.34 $ & $238.51 $\\\hline
		\multirow{3}*{\shortstack[c]{N\o rsett-Euler\\\cite{Cox2002}}} & Sup Error & $4.354\mathrm{e}{-1}$ & $9.113\mathrm{e}{-2}$ & $3.053\mathrm{e}{-2}$ & $3.140\mathrm{e}{-2}$ & $3.190\mathrm{e}{-2}$\\
		 & Abs Error & $1.003\mathrm{e}{-1}$ & $8.260\mathrm{e}{-3}$ & $3.903\mathrm{e}{-4}$ & $6.003\mathrm{e}{-4}$ & $1.166\mathrm{e}{-3}$\\
		& Runtime[s] & $14.79 $ & $24.19 $ & $56.82 $ & $106.12 $ & $212.07 $\\\hline
		\multirow{3}*{\shortstack[c]{ETD2RK\\\cite{Cox2002}}} & Sup Error & $4.362\mathrm{e}{-1}$ & $9.127\mathrm{e}{-2}$ & $3.253\mathrm{e}{-2}$ & $3.240\mathrm{e}{-2}$ & $3.240\mathrm{e}{-2}$\\
		& Abs Error & $1.117\mathrm{e}{-1}$ & $1.394\mathrm{e}{-2}$ & $1.872\mathrm{e}{-3}$ & $1.730\mathrm{e}{-3}$ & $1.730\mathrm{e}{-3}$\\
		& Runtime[s] & $25.10 $ & $44.03 $ & $106.28 $ & $221.38 $ & $418.79 $\\\hline
		\multirow{3}*{\shortstack[c]{ETD3RK\\\cite{Cox2002}}} & Sup Error & $4.362\mathrm{e}{-1}$ & $9.123\mathrm{e}{-2}$ & $3.253\mathrm{e}{-2}$ & $3.240\mathrm{e}{-2}$ & $3.240\mathrm{e}{-2}$\\
		 & Abs Error & $1.119\mathrm{e}{-1}$ & $1.397\mathrm{e}{-2}$ & $1.876\mathrm{e}{-3}$ & $1.731\mathrm{e}{-3}$ & $1.730\mathrm{e}{-3}$\\
		& Runtime[s] & $34.49 $ & $69.38 $ & $166.38 $ & $359.62 $ & $670.06 $\\\hline
		\multirow{3}*{\shortstack[c]{ETD4RK\\\cite{Cox2002}}} & Sup Error & $4.364\mathrm{e}{-1}$ & $9.126\mathrm{e}{-2}$ & $3.253\mathrm{e}{-2}$ & $3.240\mathrm{e}{-2}$ & $3.240\mathrm{e}{-2}$\\
		 & Abs Error & $1.120\mathrm{e}{-1}$ & $1.397\mathrm{e}{-2}$ & $1.876\mathrm{e}{-3}$ & $1.731\mathrm{e}{-3}$ & $1.730\mathrm{e}{-3}$\\
		& Runtime[s] & $57.08 $ & $117.35 $ & $286.77 $ & $550.36 $ & $1077.41 $\\\hline
		\multirow{3}*{\shortstack[c]{HochOst4\\\cite{Hochbruck2005}}} & Sup Error & $4.362\mathrm{e}{-1}$ & $9.123\mathrm{e}{-2}$ & $3.253\mathrm{e}{-2}$ & $3.240\mathrm{e}{-2}$ & $3.240\mathrm{e}{-2}$\\
		 & Abs Error & $1.119\mathrm{e}{-1}$ & $1.397\mathrm{e}{-2}$ & $1.876\mathrm{e}{-3}$ & $1.731\mathrm{e}{-3}$ & $1.730\mathrm{e}{-3}$\\
		& Runtime[s] & $56.55 $ & $110.74 $ & $270.06 $ & $541.89 $ & $1046.20$\\\hline
	\end{tabular}
	\caption{
	Results on numerical solutions of \eqref{eq:SABRBSDE} using exponential integrators. Here, we spatially discretize it on $\Pi_x^{\mathrm{TR}}\otimes\Pi_x^{\mathrm{Unif}}$ and solve the resulting system of ODEs. Here, the parameters of $\Pi_x^{\mathrm{TR}}$ are $x_{\mathrm{left}}=0$, $x_{\mathrm{center}}=100$, $x_{\mathrm{right}}=200$, $N_{x,0}=1000$, and $g_1=g_2=50$. For each $N_t$, the numerical solution is evaluated on the grid $\Pi_t^{\mathrm{Unif}}(N_t)\times\Pi_x^{\mathrm{TR}}\times\Pi_x^{\mathrm{Unif}}$. Maximum absolute errors in $\Pi_t^{\mathrm{Unif}}\times([80,120]\cap\Pi_x^{\mathrm{TR}})\times([0.32,0.48]\cap\Pi_x^{\mathrm{Unif}})$ are reported on the row of ``Sup Error'', absolute errors at $(t,s_0,v_0)=(0,100,0.4)$ are on the row of ``Abs Error'', and runtimes in seconds are at the bottom line.}
	\label{tab:sabr_expint}
\end{table}

Table \ref{tab:sabr_expint} reports the maximum absolute errors in $(t,s,v)\in\Pi_t^{\mathrm{Unif}}\times([80,120]\cap\Pi_x^{\mathrm{TR}})\times([0.32,0.48]\cap\Pi_x^{\mathrm{Unif}})$, the absolute errors at $(t,s,v)=(0,100,0.4)$, and the runtime in seconds for different exponential integrators. In this spatial discretization, when $N_t$ increases, the maximum absolute error and the absolute error seem to converge towards $3.240\times10^{-2}$ and $1.730\times10^{-3}$, respectively. To further improve the accuracy of the solutions, increasing the number of points of the spatial grid or expanding the approximated domain should be required.

\subsubsection{The multilevel discretization on a sparse grid}
\label{sec:SABRSG}
Next, we discretize \eqref{eq:SABRBSDE} on a sparse grid using the algorithm presented in Section \ref{sec:SGcomb}. Unlike the discretization on ``full grids'', it approximates BSDE \eqref{eq:SABRBSDE} (driven by Brownian motion) with a sequence of BSDEs driven by CTMCs on grids with different resolutions and constructs the numerical solution by combining their solutions. Recalling that the algorithm presented in Section \ref{sec:SGcomb}, the numerical solution of \eqref{eq:SABRBSDE} can be constructed in the following steps: (1) Let $q\in\mathbb N$ be fixed. (2) For each $\mathbf l=(l_1,l_2)$ such that $q-1\le|l|_1\le q$, discretize $\mathcal S_t$ and $v_t$ on $\Pi_x^{\mathrm{TR}}(0,100,200,2^{l_1-1},5,5)$ and $\Pi_x^{\mathrm{Unif}}(0,0.4,0.8,2^{l_2-1})$, respectively. Solve the resulting system of ODEs and obtain piecewise linear interpolants on $[0,200]\times[0,0.8]$ for each discrete time $t_m$. (3) Combine them using \eqref{eq:SGformula}. In Fig. \ref{fig:SGex}, we specifically show the spatial grids that result from the multilevel discretization with $q=7$.

\begin{table}[!ht]
    \centering
    \begin{tabular}{c|c||cccccc}
    \hline
        $q$ &  & $N_t=10$ & $N_t=20$ & $N_t=50$ & $N_t=100$ & $N_t=200$ & $N_t=500$ \\ \hline
        \multirow{3}*{$7$} & Sup Error & $4.521\mathrm{e}{-2}$ & $4.521\mathrm{e}{-2}$ & $4.521\mathrm{e}{-2}$ & $4.521\mathrm{e}{-2}$ & $4.488\mathrm{e}{-2}$ & $4.468\mathrm{e}{-2}$\\
        & Abs Error & $6.921\mathrm{e}{-3}$ & $6.921{e-3}$ & $6.921\mathrm{e}{-3}$ & $6.921\mathrm{e}{-3}$ & $6.921\mathrm{e}{-3}$ & $6.921\mathrm{e}{-3}$\\
        & Runtime[s] & $1.24$ & $2.38$ & $5.57$ & $10.53$ & $20.22$ & $48.33$\\\hline
        \multirow{3}*{$8$} & Sup Error & $3.469\mathrm{e}{-2}$ & $3.465\mathrm{e}{-2}$ & $3.465\mathrm{e}{-2}$ & $3.465\mathrm{e}{-2}$ & $3.423\mathrm{e}{-2}$ & $3.398\mathrm{e}{-2}$\\
        & Abs Error & $2.454\mathrm{e}{-3}$ & $2.419\mathrm{e}{-3}$ & $2.419\mathrm{e}{-3}$ & $2.419\mathrm{e}{-3}$ & $2.419\mathrm{e}{-3}$ & $2.419\mathrm{e}{-3}$\\
        & Runtime[s] & $2.20$ & $4.08$ & $9.34$ & $17.38$ & $33.07$ & $78.77$\\\hline
        \multirow{3}*{$9$}& Sup Error & $3.961\mathrm{e}{-1}$ & $8.479\mathrm{e}{-2}$ & $3.244\mathrm{e}{-2}$ & $3.230\mathrm{e}{-2}$ & $3.189\mathrm{e}{-2}$ & $3.164\mathrm{e}{-2}$\\
        & Abs Error & $1.193\mathrm{e}{-1}$ & $1.498\mathrm{e}{-2}$ & $1.474\mathrm{e}{-3}$ & $1.309\mathrm{e}{-3}$ & $1.308\mathrm{e}{-3}$ & $1.308\mathrm{e}{-3}$\\
        & Runtime[s] & $3.35$ & $6.18$ & $14.75$ & $28.49$ & $53.03$ & $127.02$\\\hline
        \multirow{3}*{$10$}& Sup Error & $5.537\mathrm{e}{+0}$ & $3.187\mathrm{e}{+0}$ & $2.353\mathrm{e}{-1}$ & $6.068\mathrm{e}{-2}$ & $3.152\mathrm{e}{-2}$ & $3.100\mathrm{e}{-2}$\\
        & Abs Error & $4.942\mathrm{e}{+0}$ & $2.854\mathrm{e}{+0}$ & $5.267\mathrm{e}{-2}$ & $6.812\mathrm{e}{-3}$ & $1.351\mathrm{e}{-3}$ & $1.032\mathrm{e}{-3}$\\
        & Runtime[s] & $6.75$ & $12.74$ & $30.60$ & $59.37$ & $126.54$ & $327.56$\\\hline
    \end{tabular}
    \caption{Results on numerical solutions based on the multilevel discretization on sparse grids. The row of ``Sup Error'' reports the maximum absolute errors on the same points $(t,s,v)$ in Table \ref{tab:sabr_expint} (i.e. $\Pi_t^{\mathrm{Unif}}\times(\Pi_x^{\mathrm{TR}}\cap[80,120])\times(\Pi_x^{\mathrm{Unif}}\cap[0.32,0.48])$.)}
	\label{tab:SABRSG}
\end{table}

\begin{table}[!ht]
	\centering
  \begin{tabular}{c|cccc}
  	\hline
  	$q$ & $7$ & $8$ & $9$ & $10$\\\hline
  	SG & $1475$ & $3333$ & $7431$ & $16393$\\
  	FG & $4225$ & $16641$ & $66049$ & $263169$\\\hline
  \end{tabular}
  \caption{SG : Total numbers of spatial points required for calculating the numerical solution in Section \ref{sec:SABRSG}. FG : The size of the corresponding full grid $(2^{q-d+1}+1)^d$.}
  \label{tab:SGcost}
 \end{table}
Table \ref{tab:SABRSG} reports the result on the numerical solutions for different $q$ and temporal steps $N_t$. Here, we used \verb+HochOst4+ to solve the resulting systems of ODEs and have evaluated maximum absolute errors of $\mathcal Y_t^{t,(s,v)}$ in the same grid as in Section \ref{sec:SABRFG}, namely, $(t,s,v)\in\Pi_t^{\mathrm{Unif}}\times([80,120]\cap\Pi_x^{\mathrm{TR}})\times([0.32,0.48]\cap\Pi_x^{\mathrm{Unif}})$. We observe that the discretization on the sparse grid can provide solutions more efficiently than on the full grid. Table \ref{tab:SGcost} presents the total numbers of spatial points the sparse grid method comsumes in Section \ref{tab:SABRSG} and the numbers of the corresponding full grids. Although the size of our ``full grid'' is $6231$, the sparse grid approach still outperforms in terms of runtime even if the total number of spatial points exceeds this (i.e. $q>8$).

\begin{figure}[!ht]
	\centering
	\begin{tabular}{cc}
		\begin{minipage}[t]{0.245\textwidth}
			\centering
			\includegraphics[width=0.8\textwidth, bb=0.900070 0.460055 352.439989 356.439927]{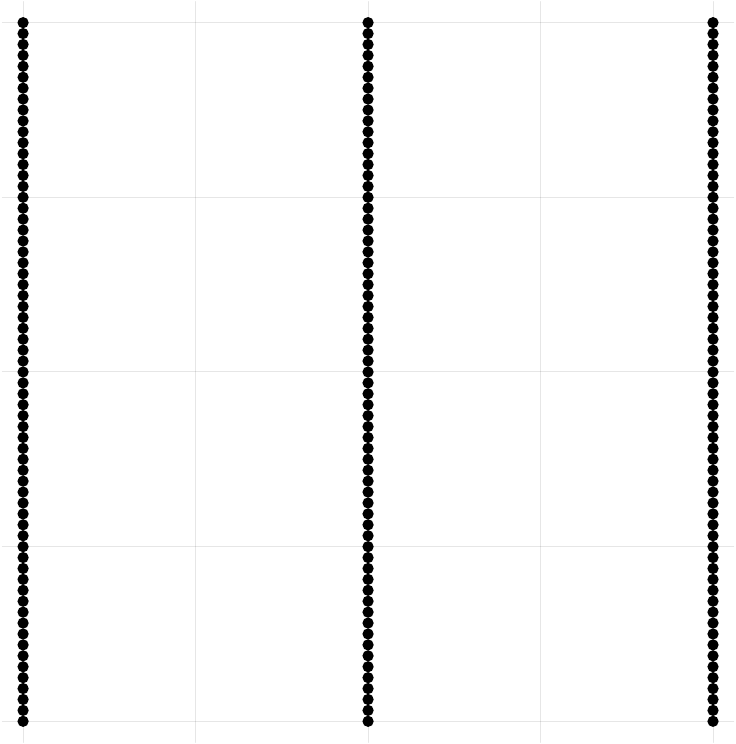}
			\subcaption{$3\times65$.}
			\label{fig:sg1-6}
		\end{minipage}
		\begin{minipage}[t]{0.245\textwidth}
			\centering
			\includegraphics[width=0.8\textwidth, bb=0.900070 0.460055 352.439989 356.439927]{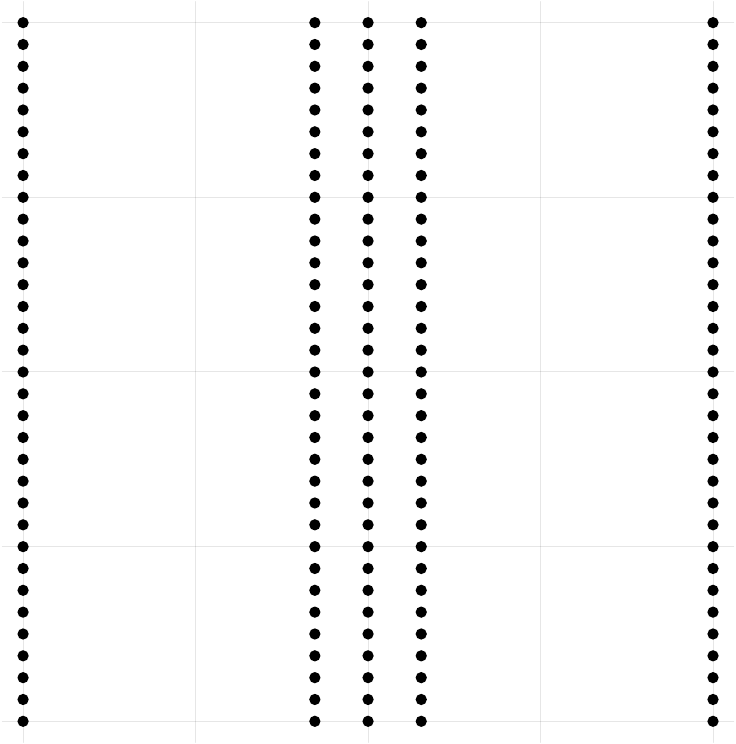}
			\subcaption{$5\times33$.}
			\label{fig:sg2-5}
		\end{minipage}
		\begin{minipage}[t]{0.245\textwidth}
			\centering
			\includegraphics[width=0.8\textwidth, bb=0.900070 0.460055 352.439989 356.439927]{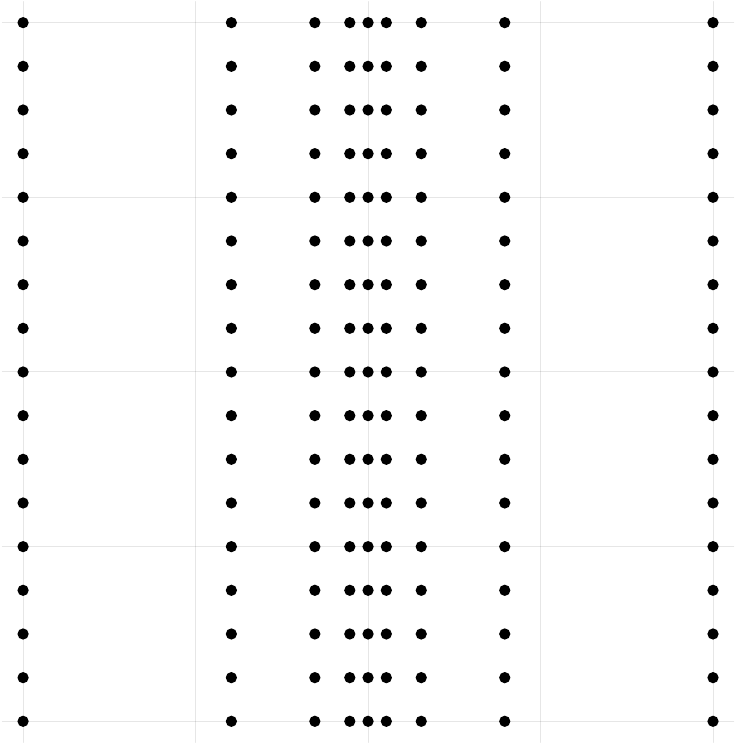}
			\subcaption{$9\times17$.}
			\label{fig:sg3-4}
		\end{minipage}
		\begin{minipage}[t]{0.245\textwidth}
			\centering
			\includegraphics[width=0.8\textwidth, bb=0.900070 0.460055 352.439989 356.439927]{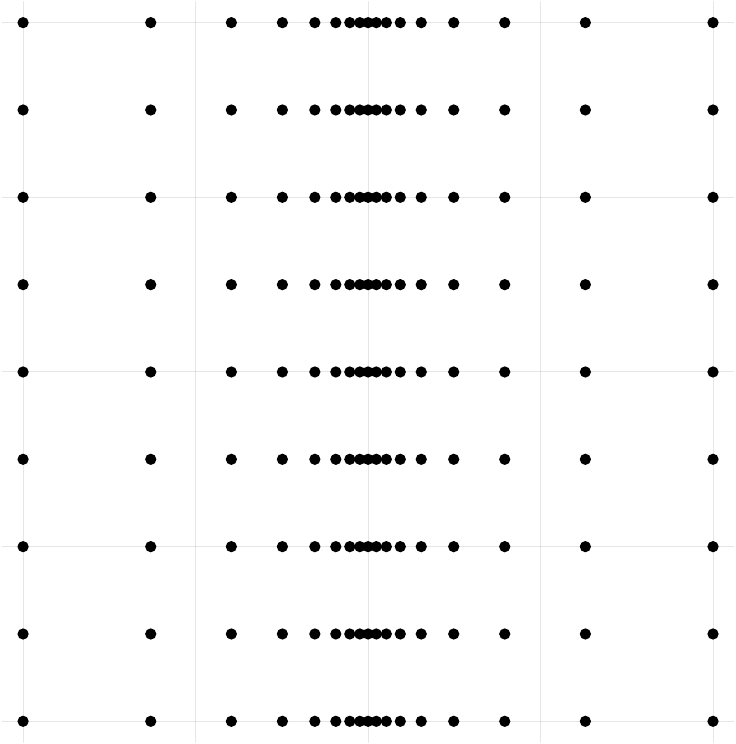}
			\subcaption{$17\times9$.}
			\label{fig:sg4-3}
		\end{minipage}\\
		\begin{minipage}[t]{0.245\textwidth}
			\centering
			\includegraphics[width=0.8\textwidth, bb=0.900070 0.460055 352.439989 356.439927]{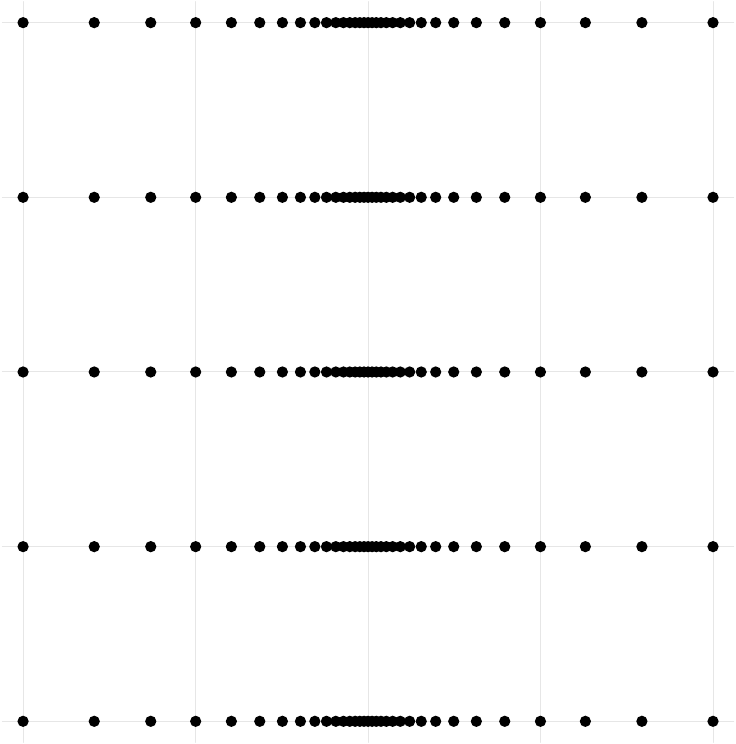}
			\subcaption{$33\times5$.}
			\label{fig:sg5-2}
		\end{minipage}
		\begin{minipage}[t]{0.245\textwidth}
			\centering
			\includegraphics[width=0.8\textwidth, bb=0.900070 0.460055 352.439989 356.439927]{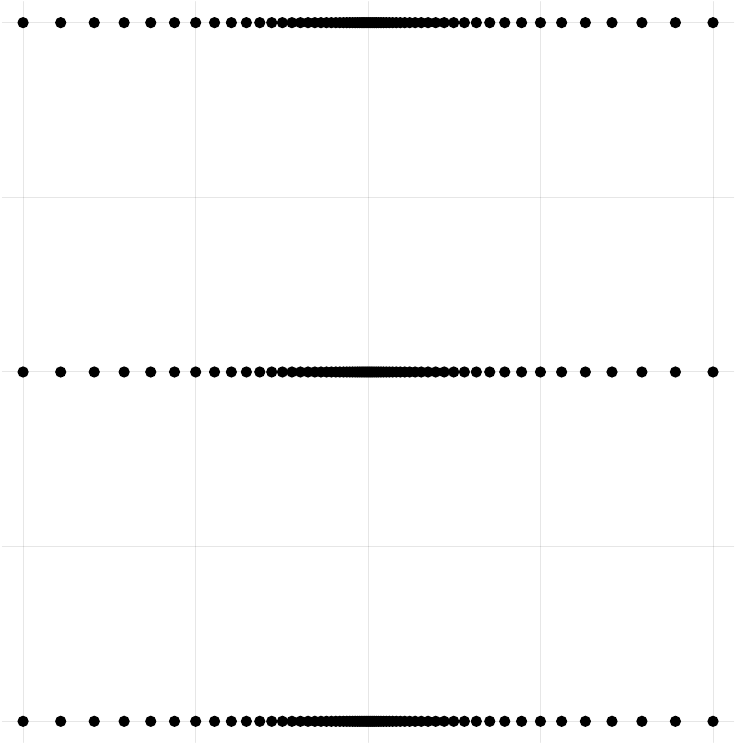}
			\subcaption{$65\times3$.}
			\label{fig:sg6-1}
		\end{minipage}
		\begin{minipage}[t]{0.245\textwidth}
			\centering
			\includegraphics[width=0.8\textwidth, bb=0.900070 0.460055 352.439989 356.439927]{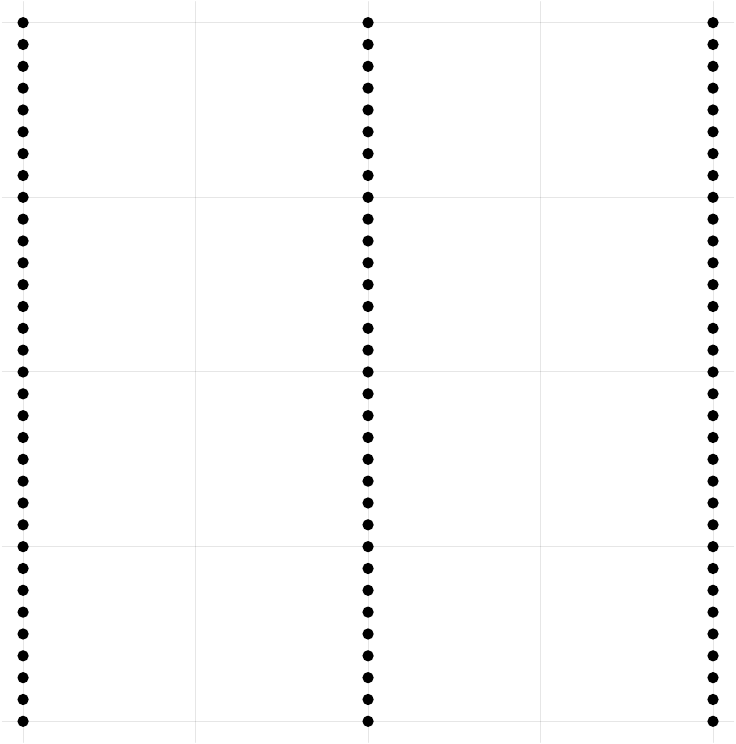}
			\subcaption{$3\times33$.}
			\label{fig:sg1-5}
		\end{minipage}
		\begin{minipage}[t]{0.245\textwidth}
			\centering
			\includegraphics[width=0.8\textwidth, bb=0.900070 0.460055 352.439989 356.439927]{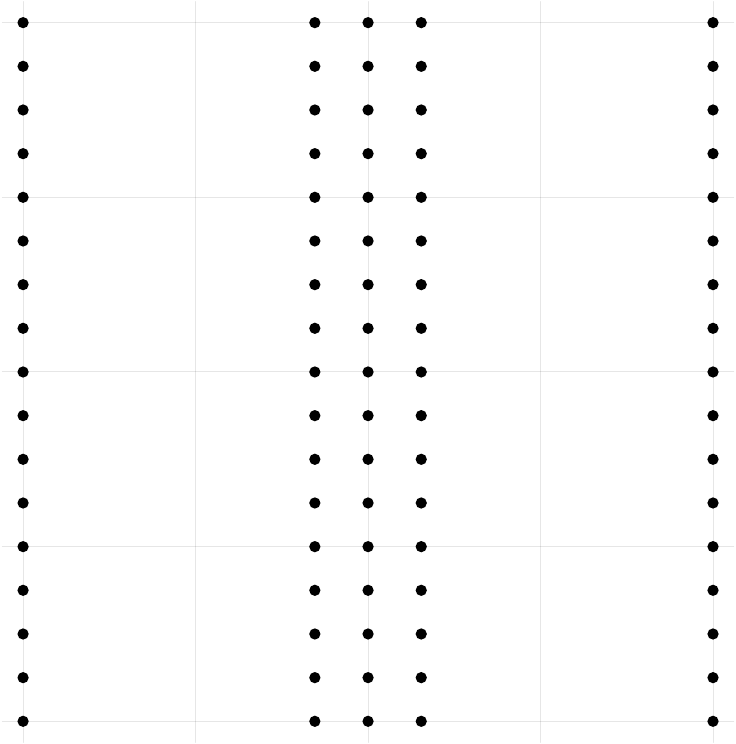}
			\subcaption{$5\times17$.}
			\label{fig:sg2-4}
		\end{minipage}\\
		\begin{minipage}[t]{0.245\textwidth}
			\centering
			\includegraphics[width=0.8\textwidth, bb=0.900070 0.460055 352.439989 356.439927]{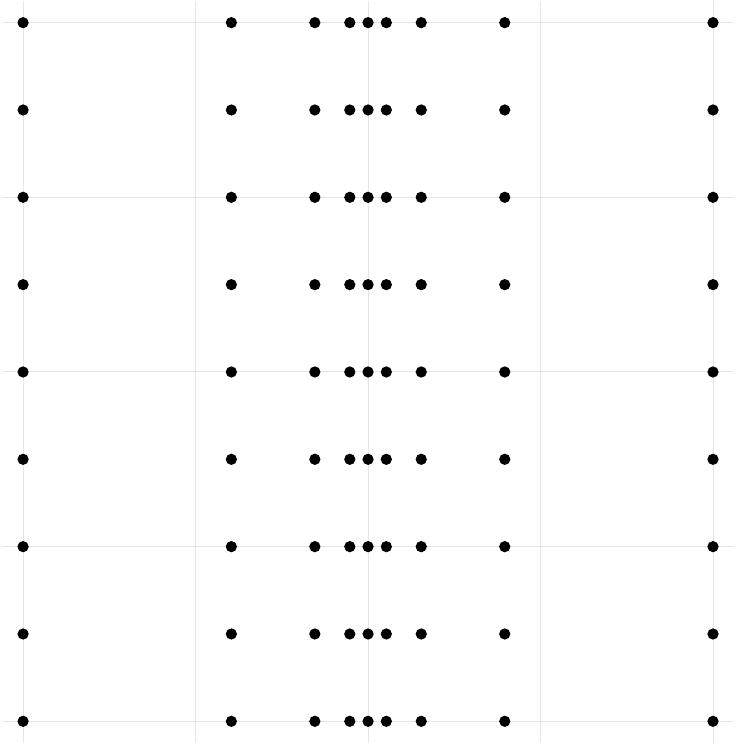}
			\subcaption{$9\times9$.}
			\label{fig:sg3-3}
		\end{minipage}
		\begin{minipage}[t]{0.245\textwidth}
			\centering
			\includegraphics[width=0.8\textwidth, bb=0.900070 0.460055 352.439989 356.439927]{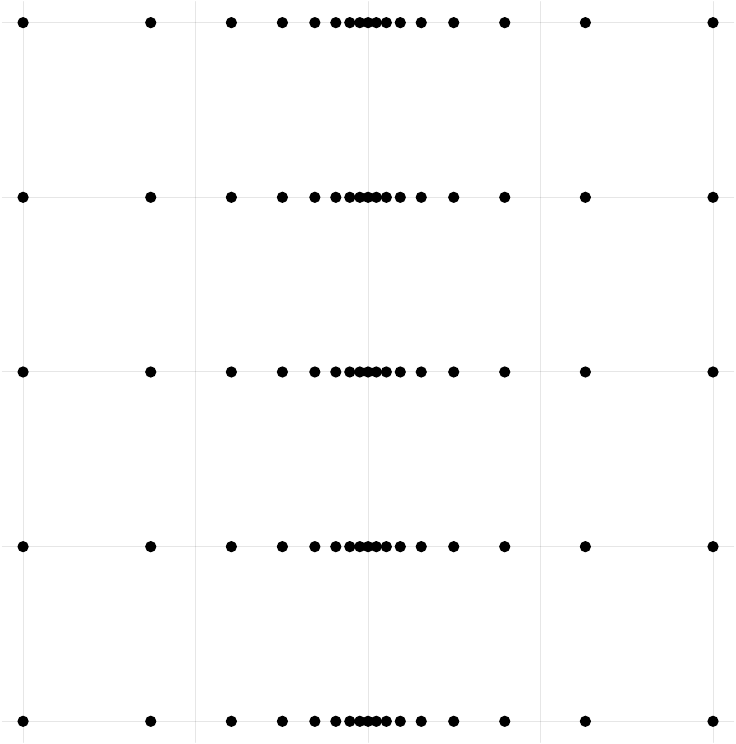}
			\subcaption{$17\times5$.}
			\label{fig:sg4-2}
		\end{minipage}
		\begin{minipage}[t]{0.245\textwidth}
			\centering
			\includegraphics[width=0.8\textwidth, bb=0.900070 0.460055 352.439989 356.439927]{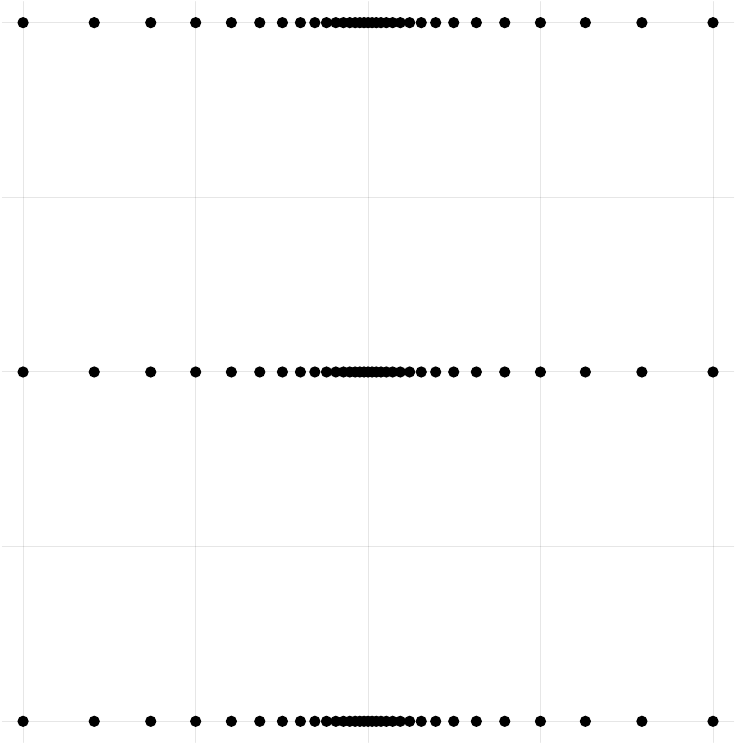}
			\subcaption{$33\times3$.}
			\label{fig:sg5-1}
		\end{minipage}
	\end{tabular}
	\caption{The grids arising from the multilevel discretization for $q=7$ in the situation of Section \ref{sec:SABRSG}. \eqref{eq:SABRBSDE} has been approximated with a combination of 11 BSDEs driven by CTMCs on these grids (a)-(k).}
	\label{fig:SGex}
\end{figure}
\subsection{Multi-asset option pricing using the multilevel discretization on sparse grids}
The final experiment is devoted to solving high-dimensional BSDEs arising from multi-asset option pricing under SLV models. Consider $d$ SLV models, that is, for $i=1,\dots,d$, 
\begin{equation}
	\begin{aligned}
	\mathcal S_t^{(i)} &= S_0^{(i)} + \int_0^t\omega^{(i)}(\mathcal S_s^{(i)}, v_s^{(i)})ds + \int_0^tm^{(i)}(v_s^{(i)})\Gamma^{(i)}(\mathcal S_s^{(i)})dW_s^{(\mathcal S,i)},\\
	v_t^{(i)} &= v_0^{(i)} + \int_0^t\mu^{(i)}(v_s^{(i)})ds + \int_0^t\sigma^{(i)}(v_s^{(i)})dW_s^{(v,i)}.
	\end{aligned}
	\label{eq:d-SLV}
\end{equation}
Here, $W^{(\mathcal S,i)}$ and $W^{(v,i)}$ are correlated as 
\begin{align*}
	\langle W^{(\mathcal S,i)},W^{(\mathcal S,j)}\rangle_t &= c_{i,j}t,\quad \langle W^{(\mathcal S,i)},W^{(v,i)}\rangle_t = \rho_{i,j}t,\quad \langle W^{(v,i)},W^{(v,i)}\rangle_t = r_{i,j}t.
\end{align*}
Let $\mathbf C = \begin{pmatrix} C_{\mathcal S} & C_{\mathcal S,v}\\C_{\mathcal S,v}^\ast & C_v\end{pmatrix}$ be the correlation matrix, where
\[
	C_{\mathcal S} = \begin{pmatrix} c_{1,1} & \ldots & c_{1,d}\\\vdots & \ddots & \vdots\\c_{d,1} & \ldots & c_{d,d}\end{pmatrix},\quad C_{\mathcal S,v} = \begin{pmatrix} \rho_{1,1} & \ldots & \rho_{1,d}\\\vdots & \ddots & \vdots\\\rho_{d,1} & \ldots & \rho_{d,d}\end{pmatrix},\quad C_v = \begin{pmatrix} r_{1,1} & \ldots & r_{1,d}\\\vdots & \ddots & \vdots\\r_{d,1} & \ldots & r_{d,d}\end{pmatrix}.
\]
Using a lower triangular matrix $\mathbf L$ constructed from the Cholesky decomposition $\mathbf C = \mathbf L\mathbf L^\ast$ and $\mathbf W$ defined as 
\[
	\mathbf W = \mathbf L^{-1}\begin{pmatrix} W^{(\mathcal S,1)}\\\vdots\\W^{(\mathcal S,d)}\\W^{(v,1)}\\\vdots\\W^{(v,d)}\end{pmatrix}
\]
is $2d$-dimensional standard Brownian motion. The corresponding BSDE is 
\begin{equation}
	\label{eq:d-SLVBSDE}
	\begin{split}
	\mathcal X_t &= \mathcal X_0 + \int_0^t\boldsymbol\mu(\mathcal X_s)ds + \int_0^t\boldsymbol\sigma(\mathcal X_s)d\mathbf W_s\quad \text{for}\quad t\in[0,T]\\
	\mathcal Y_t &= g(\mathcal S_T^{(1)},\dots,\mathcal S_T^{(d)})-\int_t^Tf(s,\mathcal X_s,\mathcal Y_s,\mathcal Z_s)ds -\int_t^T\mathcal Z_s^\ast d\mathbf W_s,
	\end{split}
\end{equation}
where 
\[
	\mathcal X_t = \begin{pmatrix}\mathcal S_t^{(1)}\\\vdots\\\mathcal S_t^{(d)}\\v_t^{(1)}\\\vdots\\v_t^{(d)}\end{pmatrix},\quad 
	\boldsymbol\mu(x) = \begin{pmatrix}\omega^{(1)}(x_1,x_{d+1})\\\vdots\\\omega^{(d)}(x_d,x_{2d})\\\mu^{(1)}(x_{d+1})\\\vdots\\\mu^{(d)}(x_{2d})\end{pmatrix},\quad \boldsymbol\sigma(x) = \operatorname{diag}\begin{pmatrix}m^{(1)}(x_{d+1})\Gamma^{(1)}(x_1)\\\vdots\\m^{(d)}(x_{2d})\Gamma^{(d)}(x_{2d})\\ \sigma^{(1)}(x_{d+1})\\\vdots\\\sigma^{(d)}(x_{2d})
	\end{pmatrix}\mathbf L,
\]
and the driver is 
\[
	f(t,x,y,z)=r(y-z^\ast\boldsymbol\sigma(x)^{-1}x)^+-R(y-z^\ast\boldsymbol\sigma(x)^{-1}x)^-+z^\ast\boldsymbol\sigma(x)^{-1}\boldsymbol\mu(x).
\]

\subsubsection{Basket option under 2 Heston-SABR models}
Consider pricing of the European basket call option whose basket is comprised of two Heston-SABR models. That is, the coefficient functions in \eqref{eq:d-SLV} are 
\begin{equation}
	\begin{gathered}
		\omega^{(i)}(s,v) = b^{(i)}\cdot s,\quad m^{(i)}(v) = \sqrt{v},\quad \Gamma^{(i)}(s) = s^{\beta^{(i)}},\\
		\mu^{(i)}(v) = \eta^{(i)}(\theta^{(i)}-v),\quad \sigma^{(i)}(v) = \alpha^{(i)}\sqrt{v}.
	\end{gathered}
	\label{eq:2hestonsabrcoef}
\end{equation}
for $i=1,2$. The corresponding BSDE is four-dimensional and results in \eqref{eq:SLVBSDE} with $g$ replaced as 
\begin{align*}
	g(f_1,f_2) = (\lambda_1f_1+\lambda_2f_2-K)^+,
\end{align*}
where $\lambda_1$ and $\lambda_2$ are constants. Since a non-differentiability of $g$ is appreared in the hyperplane $\{(s^{(1)},s^{(2)},v^{(1)},v^{(2)}):\lambda_1s^{(1)}+\lambda_2s^{(2)}-K=0\}$, we use a linear coordinate transformation, say
\begin{align*}
	\widehat{\mathcal X_t} = \begin{pmatrix}\widehat{\mathcal S}_t^{(1)}\\\widehat{\mathcal S}_t^{(2)}\\v_t^{(1)}\\v_t^{(2)}\end{pmatrix} = B\begin{pmatrix}\mathcal S_t^{(1)}\\\mathcal S_t^{(2)}\\v_t^{(1)}\\v_t^{(2)}\end{pmatrix},\quad\text{where}\quad B = \begin{pmatrix} \lambda_1 & \lambda_2 & 0 & 0\\-\lambda_1 & \lambda_2 & 0 & 0\\0 & 0 & 1 & 0\\0 & 0 & 0 & 1\end{pmatrix},
\end{align*}
before the spatial discretization, that turns out to be 
\begin{equation}
	\begin{split}
	\widehat{\mathcal X}_t &= \widehat{\mathcal X}_0 + \int_0^t\mathbf B\boldsymbol\mu(\mathbf B^{-1}\widehat{\mathcal X}_s)ds + \int_0^t\mathbf B\boldsymbol\sigma(\mathbf B^{-1}\widehat{\mathcal X}_s)d\mathbf W_s,\\
	\mathcal Y_t &= (\widehat{\mathcal S}_T^{(1)}-K)^+-\int_t^Tf(s,\mathbf B^{-1}\widehat{\mathcal X}_s,\mathcal Y_s,\mathcal Z_s)ds -\int_t^T\mathcal Z_s^\ast d\mathbf W_s.
	\end{split}
	\label{eq:HestonSABRBasket}
\end{equation}
The parameters chosen here are:
\[
\begin{tabular}{c|cccccccccc}
  $i$ & $T$ & $K$ & $R$ & $r$ & $\lambda^{(i)}$ & $\beta^{(i)}$ & $\eta^{(i)}$ & $\theta^{(i)}$ & $\alpha^{(i)}$ & $b^{(i)}$\\\hline
  $1$ & \multirow{2}*{$1.0$} & \multirow{2}*{$100$} & \multirow{2}*{$0.07$} & \multirow{2}*{$0.01$} & $0.5$ & $0.6$ & $0.9$ & $0.02$ & $0.65$ & $0.01$\\
  $2$ & & & &  & $0.5$ & $0.07$ & $0.2$ & $0.3$ & $0.3$ & $0.01$
\end{tabular}
\]
The correlation matrices are:
\[
	C_{\mathcal S} = \begin{pmatrix}1.0 & 0.5\\0.5 &1.0\end{pmatrix},\quad
	C_{\mathcal S,v} = \begin{pmatrix}0.65 & 0.3\\-0.1 &0.05\end{pmatrix},\quad
	C_v = \begin{pmatrix}1.0 & 0.7\\0.7 &1.0\end{pmatrix}.
\]
For $(\widehat s^{(1)},\widehat s^{(2)},\widehat v^{(1)},\widehat v^{(2)})^\ast = \mathbf B(s^{(1)},s^{(2)},v^{(1)},v^{(2)})^\ast$, we approximate the spatial domain as $(\widehat s^{(1)},\widehat s^{(2)},\widehat v^{(1)},\widehat v^{(2)}) \in [51,149]\times[-49,49]\times[0.01,0.79]\times[0.01,0.59]$, and apply the multilevel discretization on a sparse grid. We apply a Tavella-Randall grid to the first dimension and the uniform grids to the others. The parameters of the Tavella-Randall grid are $g_1=g_2=1.0$.

Table \ref{tab:HestonSABRSG} reports sparse grid solutions $\mathcal Y_0^{0,(s^{(1)},s^{(2)},v^{(1)},v^{(2)})}$ at $(s^{(1)},s^{(2)},v^{(1)},v^{(2)})=(100,100,0.4,0.3)$ calculated using \verb+HochOst4+ and their computational times in seconds, for different $q$ and $N_t$. The numerical solutions seem to converge towards approximately $7.517$.

\begin{table}[!ht]
    \centering
    \begin{tabular}{c|c||ccccc}
		    \hline
        $q$ & & $N_t=10$ & $N_t=20$ & $N_t=50$ & $N_t=100$ & $N_t=200$ \\ \hline
				\multirow{2}*{$8$} & $\mathcal Y_0^{0,(100,100,0.4,0.3)}$ & $7.50016$ & $7.50013$ & $7.50013$ & $7.50013$ & $7.50013$\\
				 & Runtime[s] & $50.71 $ & $57.89 $ & $141.68 $ & $276.37 $ & $552.95 $\\\hline
				\multirow{2}*{$9$} & $\mathcal Y_0^{0,(100,100,0.4,0.3)}$ & $7.51457$ & $7.51455$ & $7.51455$ & $7.51455$ & $7.51455$\\
				 & Runtime[s] & $119.65 $ & $195.74 $ & $484.67 $ & $931.01 $ & $1904.49 $\\\hline
				\multirow{2}*{$10$} & $\mathcal Y_0^{0,(100,100,0.4,0.3)}$ & $7.51522$ & $7.51656$ & $7.51658$ & $7.51658$ & $7.51658$\\
				 & Runtime[s] & $838.15 $ & $1487.51 $ & $3376.85 $ & $6542.38 $ & $12817.34 $\\\hline
				\multirow{2}*{$11$} & $\mathcal Y_0^{0,(100,100,0.4,0.3)}$ & $7.43057$ & $7.49986$ & $7.51675$ & $7.51713$ & $7.51714$\\
				 & Runtime[s] & $5965.28 $ & $10932.85 $ & $24935.39 $ & $48705.96$ & $97810.97$\\\hline
    	\end{tabular}
	\caption{Results on numerical solutions $\mathcal Y_0^{0,(100,100,0.4,0.3)}$ of \eqref{eq:HestonSABRBasket} using a multilevel spatial discretization. }
	\label{tab:HestonSABRSG}
\end{table}

\begin{table}[!ht]
	\centering
  \begin{tabular}{c|cccc}
  	\hline
  	$q$ & $8$ & $9$ & $10$ & $11$\\\hline
  	SG & $36901$ & $112105$ & $320675$ & $877655$\\
  	FG & $1185921$ & $17850625$ & $276922881$ & $4362470401$\\\hline
  \end{tabular}
  \caption{SG : Total numbers of spatial points of grids comprised of the sparse grid solutions for different $q$. FG : The size of the corresponding full grid $(2^{q-d+1}+1)^d$.}
  \label{tab:SGcost2}
 \end{table}

\section{Conclusion}
\label{sec:conclusion}
A Markov BSDE driven by a CTMC associates with a system of ODEs. With arguments based on this observation, we proposed the multi-stage Euler-Maruyama methods for the BSDE, directly related to exponential integrators for solving the system of ODEs. Together with a suitable spatial discretization, these methods can be applied to solve BSDEs driven by Brownian motion. We also proposed a multilevel spatial discretization based on a sparse grid combination technique for handling a high-dimensional BSDE driven by Brownian motion, and it enables us to calculate solutions with less computational cost. The efficiency of our numerical methods has been confirmed through numerical experiments using derivative pricing problems in mathematical finance.

\section*{Acknowledgement}
I would like to express my sincere gratitude to my supervisor, Professor Jun Sekine, for his helpful discussions and invaluable advice. This work was supported by JST SPRING, Grant Number JPMJSP2138.
\appendix
\section{Proofs}
\label{appendix:proof}
\subsection{Proof of Theorem \ref{thm:CTMCFeynmanKac}}
\label{sec:ProofOfThm2.2}
\begin{proof}
	For a solution $U_t$ to \eqref{eq:ODE}, the It\^o formula immediately implies $(Y_t,Z_t)=(X_t^\ast U_t,U_t)$ solves \eqref{eq:MarkovCTMCBSDE}.

	Let $t_1$ and $t_2$ be fixed. Without loss of generality, assume that $t_2>t_1$. 
	\begin{align*}
		Y_{t_1}^{t_1,e_i}-Y_{t_2}^{t_2,e_i} &= \mathbb E[Y_{t_1}^{t_1,e_i}-Y_{t_2}^{t_1,e_i}+(X_{t_2}^{t_1,e_i})^\ast V_{t_2}-e_i^\ast X_{t_2}^{t_2,e_i}]\\
		&= \begin{multlined}[t]
			\mathbb E\left[\int_{]t_1,t_2]}h(X_{u-}^{t_1,e_i},u,Y_{u-}^{t_1,e_i},Z_u^{t_1,e_i})du -\int_{]t_1,t_2]}dM_u^\ast Z_u^{t_1,e_i}\right.\\
			\left.+\left(\int_{]t_1,t_2]}Q_u^\ast X_{u-}^{t_1,e_i}-M_{t_2}+M_{t_1}\right)^\ast V_{t_2}\right]
		\end{multlined}\\
		&= \mathbb E\left[\int_{]t_1,t_2]}[h(X_{u-}^{t_1,e_i},u,Y_{u-}^{t_1,e_i},Z_u^{t_1,e_i})+(X_{u-}^{t_1,e_i})^\ast Q_uV_{t_2}]du\right]
	\end{align*}
	Hence, 
	\[
		|Y_{t_1}^{t_1,e_i}-Y_{t_2}^{t_2,e_i}| \le C\sqrt{t_2-t_1}\sqrt{\int_{]t_1,t_2]}\mathbb E[|h(X_{u-}^{t_1,e_i},u,Y_{u-}^{t_1,e_i},Z_u^{t_1,e_i})+(X_{u-}^{t_1,e_i})^\ast Q_uV_{t_2}|^2]du}
	\]
	Using the uniform boundedness of $Q_u$, the Lipschitz continuity of $h$, evaluate the integrand as 
	\begin{multline*}
		|h(X_{u-}^{t_1,e_i},u,Y_{u-}^{t_1,e_i},Z_u^{t_1,e_i})+(X_{u-}^{t_1,e_i})^\ast Q_uV_{t_2}|^2\\
		\le C(|Y_{u-}^{t_1,e_i}|^2+\|Z_u^{t_1,e_i}\|_{X_{u-}^{t_1,e_i}}^2 + |h(X_{u-}^{t_1,e_i},u,0,0)|^2 + \sup_{\substack{0\le s,u\le T\\i=1,\dots,N}}|e_j^\ast Q_sV_u|^2).
	\end{multline*}
	Recall that $Q_s$ is assumed to be uniform bounded and that
	\begin{align*}
		\mathbb E\int_{]t_1,t_2]}|h(X_{u-}^{t_1,e_i},u,0,0)|^2du &= \sum_{j=1}^N\int_{]t_1,t_2]}|h(e_j,u,0,0)|^2\mathbb P(X_u^{t_1,e_i}=e_j|X_{t_1}^{t_1,e_i}=e_i)du\\
		&\le C\sup_{j=1,\dots,N}\int_{]0,T]}|h(e_j,u,0,0)|^2du.
	\end{align*}
	Hence,
	\[
		|Y_{t_1}^{t_1,e_i}-Y_{t_2}^{t_2,e_i}|
		\le C\sqrt{t_2-t_1}
			\begin{multlined}[t]
			\left(\mathbb E\left[\sup_{0\le u\le T}|Y_u^{t_1,e_i}|^2 + \int_{]0,T]}\|Z_u^{t_1,e_i}\|_{X_{u-}^{t_1,e_i}}^2\right.\right.\\
			\left.\left.+\sup_{i=1,\dots,N}\int_{]0,T]}|h(e_i,u,0,0)|^2du+1\right]du\right),
			\end{multlined}
	\]
	from which the continuity of $t\mapsto Y_t^{t,e_i}$ directly follows, as well as of $V_t$. Using 
	\[
		(\Delta X_u^{t,e_i})^\ast V_u = \Delta((X_u^{t,e_i})^\ast V_u) = \Delta Y_u^{u,X_u^{t,e_i}} = \Delta Y_u^{t,e_i} = \Delta M_u^\ast Z_u^{t,e_i} = (\Delta X_u^{t,e_i})^\ast Z_u^{t,e_i},
	\]
	we obtain $\int_{]0,t]}(dX_u^{t,e_i})^\ast (Z_u^{t,e_i}-V_u) = 0$. Notice that
	\[
		\int_{]0,t]}dM_u^\ast (Z_u^{t,e_i}-V_u) = -\int_{]0,t]}(X_{u-}^{t,e_i})^\ast Q_u(Z_u^{t,e_i}-V_u)du = 0,
	\]
	since any predictable finite variation martingales starting at $0$ takes zero constantly (e.g. Corollary 8.2.14, p.204 in \cite{Cohen2015}.) Hence
	\[
		\mathbb E\int_{]0,T]}\|Z_u^{t,e_i}-V_u\|_{X_{u-}^{t,e_i}}^2du = \mathbb E\left|\int_{]0,T]}dM_u^\ast(Z_u^{t,e_i}-V_u)\right|^2 = 0,
	\]
	which means $Z_u^{t,e_i}\sim_MV_u$. Together it with the Lipschitz continuity, 
	\begin{equation}
		\label{eq:A11}
		h(X_{u-}^{t,e_i},u,(X_{u-}^{t,e_i})^\ast V_u,Z_u^{t,e_i})=h(X_{u-}^{t,e_i},u,(X_{u-}^{t,e_i})^\ast V_u,V_u),\quad du\otimes d\mathbb P\text{-a.s.}
	\end{equation}
	Plugging it into the conditional expectation representation of $Y_t^{t,e_i}$, we obtain for $t\in[0,T]$,
	\[
		e_i^\ast V_t = Y_t^{t,e_i} = e_i^\ast\Phi(T,t)G+e_i^\ast\int_{]t,T]}\Phi(u,t)H(u,V_s)du,
	\]
	which results in the variation-of-constants of \eqref{eq:ODE} in what follows:
	\[
		V_t = \Phi(T,t)G+\int_t^T\Phi(s,t)H(s,V_s)ds.
	\]
\end{proof}
\subsection{Proof of Proposition \ref{3.1}}
\label{sec:ProofOfLem3.1}
\begin{proof}
	We show $e_j^\ast Qe_i \ge0$ for $i\neq j$ and $\sum_ie_j^\ast Qe_i=0$ for all $j=1,\dots,N$. It is trivial for $j=1$ and $N$ since $e_1^\ast Q=e_N^\ast Q=0$. Let $j=2,\dots,N-1$ be fixed. The condition $\sum_{i=1}^Ne_j^\ast Qe_i = 0$ clearly holds since 
	\begin{align*}
		e_i^\ast Qe_{i-1} + e_i^\ast Qe_{i+1}^\ast &= 
		\frac{\sigma^2(x_i)-\delta x_i\mu(x_i)}{\delta x_{i-1}(\delta x_{i-1}+\delta x_i)} + \frac{\sigma^2(x_i)+\delta x_{i-1}\mu(x_i)}{\delta x_i(\delta x_{i-1}+\delta x_i)} \\
		&= -\frac{(\delta x_i-\delta x_{i-1})\mu(x_i)-\sigma^2(t,x_i)}{\delta x_i\delta x_{i-1}} = -e_i^\ast Qe_i.
	\end{align*}
	We remain to prove the nonnegativity of off-diagonal elements of $Q$. Denote $\mu(x_i)=\mu(x_i)^+-\mu(x_i)^-$, where
	\[
		\mu(x)^+ = \max\left\{\mu(x),0\right\}(\ge0)\quad\text{and}\quad \mu(x)^- = -\min\left\{\mu(x),0\right\}(\ge0).
	\]
	We obtain
	\begin{equation}
		\frac{\sigma^2(x_i)-\delta x_i\mu(x_i)}{\delta x_{i-1}(\delta x_{i-1}+\delta x_i)} =\frac{\mu(x_i)^-}{\delta x_{i-1}}+\frac{\sigma^2(s_i)-(\delta x_{i-1}\mu(x_i)^-+\delta x_i\mu(x_i)^+)}{\delta x_{i-1}(\delta x_{i-1}+\delta x_i)}\label{eq8}
	\end{equation}
	and
	\begin{equation}
		\frac{\sigma^2(x_i)+\delta x_{i-1}\mu(x_i)}{\delta x_i(\delta x_{i-1}+\delta x_i)} = \frac{\mu(x_i)^+}{\delta x_i}+\frac{\sigma^2(x_i)-(\delta x_{i-1}\mu(x_i)^-+\delta x_i\mu(x_i)^+)}{\delta x_i(\delta x_{i-1}+\delta x_i)}.\label{eq9}
	\end{equation}
	The first terms on the right-hand side of \eqref{eq8} and \eqref{eq9} are clearly nonnnegative. Under the condition \eqref{cond}, we obtain 
	\begin{align*}
		\sigma^2(x_i)\ge\max_{1\le j\le N-1}\{\delta x_j\}\cdot|\mu(x_i)|&=\max_{1\le j\le N-1}\{\delta x_j\}(\mu^+(x_i)+\mu^-(x_i))\\
		&\ge \delta x_{i-1}\mu^-(x_i)+\delta x_i\mu^+(x_i),
	\end{align*}
	so that the second terms in \eqref{eq8} and \eqref{eq9} are nonnegative. Moreover, if \eqref{cond} is strict, \eqref{eq8} and \eqref{eq9} are positive.
\end{proof}

\subsection{Proof of Proposition \ref{lemma3.2}}
\label{sec:ProofOfLem3.2}
In this subsection, we denote $I_N$ as the $N\times N$ identity matrix, $\delta_{ij}$ as the Kronecker's delta, $\mathbbm{1}=(1,1,\dots,1)^\ast\in\mathbb R^N$, and
\begin{equation}
	\label{eq:psiei}
	\psi_{M_1,e_{i}} = \operatorname{diag}(M_1^\ast e_{i})-M_1^\ast\operatorname{diag}(M_1,e_{i})-\operatorname{diag}(e_{i})M_1.
\end{equation}
We sometimes omit subscripts when they can be unambiguously determined from the context.
\paragraph{Step 1.}
Let $h(e_{i},t,y,z) \coloneqq f(t,x_i,e_{i}^\ast z,\sigma^\ast(x_i)(e_{i}^\ast\widetilde D_1^{(1)}z,\dots,e_{i,N}^\ast\widetilde D_1^{(d)}z)^\ast)$ for $t\in[0,T]$, $i=1,\dots,N$, $y\in\mathbb R$ and $z\in\mathbb R^N$. The Lipschitz continuity for $f$ implies
\begin{multline*}
	|h(e_{i},t,y,z)-h(e_{i},t,y',z')|^2\\
	\le L(|y-y'|^2+\|\sigma^\ast(x_i)(e_{i}^\ast\widetilde D_1^{(1)}(z-z'),\dots,e_{i}^\ast\widetilde D_1^{(d)}(z-z'))^\ast\|^2).
\end{multline*}
To obtain the desired result, it is sufficient to show 
\begin{equation}
	\|\sigma^\ast(x_i)(e_{i}^\ast\widetilde D_1^{(1)}(z-z'),\dots,e_{i}^\ast\widetilde D_1^{(d)}(z-z'))^\ast\|^2\le C\|z-z'\|_{e_i}^2 \label{eq:est}
\end{equation}
for some constant $C>0$ for any $i=1,\dots,N$. As the left-hand side of \eqref{eq:est} can be represented as a quadratic form of symmetric matrix
\begin{equation}
	\label{eq:M0}
	M_0 \coloneqq \sum_{p,q=1}^d(\sigma\sigma^\ast)^{(p,q)}(x_i)(\widetilde D_1^{(q)})^\ast e_{i}e_{i}^\ast\widetilde D_1^{(p)},
\end{equation}
\eqref{eq:est} is equivalent to the positive semi-definiteness of $M_{i,C} \coloneqq C\psi_{Q,e_i}-M_0$. Before showing this, we require to several lemmas in Step 2.

\paragraph{Step 2.}
In this step, for any $N\times N$ matrix $M_1$, we denote $M_1^{(N-1)}$ as the $(N-1)\times(N-1)$ matrix obtained by removing the last row and column vector of $M_1$.
\begin{lemma}
	\label{lem:possemlemma}
	Let $M_1, M_2$ be $N\times N$ real symmetric matrices satisfying $M_1\mathbbm{1}=0$ and $M_2\mathbbm{1}=0$. If $M_1^{(N-1)}$ is positive definite, $cM_1-M_2$ is positive semi-definite for sufficiently large $c>0$.
\end{lemma}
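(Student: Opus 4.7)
The plan is to reduce the quadratic form $v^\ast(cM_1-M_2)v$ on $\mathbb R^N$ to a quadratic form on $\mathbb R^{N-1}$ associated with the $(N-1)\times(N-1)$ submatrices $M_1^{(N-1)}$ and $M_2^{(N-1)}$, and then absorb $M_2^{(N-1)}$ into $M_1^{(N-1)}$ using its positive definiteness. The common null direction $\mathbbm{1}$ is what makes such a reduction possible.

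First I would establish the following reduction identity: for any $v=(v_1,\dots,v_N)^\ast\in\mathbb R^N$ and any real symmetric $M$ satisfying $M\mathbbm{1}=0$,
\[
v^\ast M v \;=\; \widetilde v^\ast M^{(N-1)}\widetilde v,\qquad \widetilde v\coloneqq(v_1-v_N,\,\dots,\,v_{N-1}-v_N)^\ast\in\mathbb R^{N-1}.
\]
This is immediate: because $M\mathbbm{1}=0$, replacing $v$ by $v-v_N\mathbbm{1}$ leaves $v^\ast M v$ unchanged, and the replaced vector is $(\widetilde v^\ast,0)^\ast$, so the quadratic form collapses onto the first $N-1$ coordinates, leaving exactly $M^{(N-1)}$.

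Applying this identity to both $M_1$ and $M_2$ yields
\[
v^\ast(cM_1-M_2)v \;=\; \widetilde v^\ast\bigl(cM_1^{(N-1)}-M_2^{(N-1)}\bigr)\widetilde v.
\]
Since $M_1^{(N-1)}$ is symmetric positive definite by hypothesis and $M_2^{(N-1)}$ is symmetric, the spectral theorem gives real eigenvalues for both. Choosing
\[
c \;\ge\; \max\!\left\{0,\;\frac{\lambda_{\max}(M_2^{(N-1)})}{\lambda_{\min}(M_1^{(N-1)})}\right\}
\]
makes $cM_1^{(N-1)}-M_2^{(N-1)}$ positive semi-definite, and hence $v^\ast(cM_1-M_2)v\ge 0$ for every $v\in\mathbb R^N$, as required.

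I do not anticipate a substantive obstacle. The only nontrivial step is the observation that the shared null vector $\mathbbm{1}$ permits the projection onto an $(N-1)$-dimensional complement; everything after that is a one-line spectral comparison. If a basis-free presentation is preferred, one can instead parametrize the complement $\mathbbm{1}^\perp$ by the injection $J=\bigl(I_{N-1},-\mathbbm{1}\bigr)^\ast$ and invoke $J^\ast MJ$ in place of $M^{(N-1)}$, but the submatrix form above is the most direct.
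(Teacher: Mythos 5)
Your proof is correct and follows essentially the same route as the paper: the paper likewise reduces $z^\ast(cM_1-M_2)z$ to $(x-y\mathbbm{1})^\ast(cM_1^{(N-1)}-M_2^{(N-1)})(x-y\mathbbm{1})$ via the shared null vector $\mathbbm{1}$ and then compares the extreme eigenvalues of the two $(N-1)\times(N-1)$ blocks. No gaps.
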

\begin{proof}
	Let $\lambda_1>0$ be the minimum eigenvalue of $M_1^{(N-1)}$, and $\lambda_2$ be the maximum eigenvalue of $M_2^{(N-1)}$. $cM_1^{(N-1)}-M_2^{(N-1)}$ is positive definite for $c>(\lambda_2/\lambda_1)\vee0$. Indeed, for any non-zero vector $z$, we see that
	\[
		z^\ast (cM_1^{(N-1)}-M_2^{(N-1)})z \ge c\lambda_1\|z\|^2-\lambda_2\|z\|^2 > 0.
	\]
	Since any real symmetric matrix $M$ satisfying $M\mathbbm{1}=0$ has the following block matrix representation
	\[
		M_0 = \begin{pmatrix}
			M_0^{(N-1)} & -M_0^{(N-1)}\mathbbm{1}\\
			-\mathbbm{1}^\ast M_0^{(N-1)} & \mathbbm{1}^\ast M_0^{(N-1)}\mathbbm{1}
		\end{pmatrix},
	\]
	the quadratic form of $M$ can be written as 
	\begin{equation}
		\label{eq:quadraticform}
		z^\ast M_0z
		= (x^\ast, y)\begin{pmatrix}
			M_0^{(N-1)} & -M_0^{(N-1)}\mathbbm{1}\\
			-\mathbbm{1}^\ast M_0^{(N-1)} & \mathbbm{1}^\ast M_0^{(N-1)}\mathbbm{1}
		\end{pmatrix}
		\begin{pmatrix}x\\y\end{pmatrix}
		= (x-y\mathbbm{1})^\ast M_0^{(N-1)}(x-y\mathbbm{1}),
	\end{equation}
	for $z=(x^\ast,y)^\ast\in\mathbb R^N$. Applying $M_0=cM_1-M_2$ to \eqref{eq:quadraticform}, the quadratic form takes a positive value for $z=(x^\ast, y)$ except $x-y\mathbbm{1}=0$, and takes zero if $x-y\mathbbm{1}=0$. Hence the positive semi-definiteness of $cM_1-M_2$ is obtained.
\end{proof}

\begin{lemma}
	For $i,j=1,\dots,N$ and any matrix $M_1$,
	\begin{equation}
		\label{eq:psielements}
		\psi_{M_1,e_i}e_j = (e_i^\ast M_1e_j)(e_j-e_i) -\delta_{ij}M_1^\ast e_j.
	\end{equation}
\end{lemma}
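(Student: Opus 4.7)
The plan is a direct, coordinate-wise verification from the definition of $\psi_{M_1,e_i}$. Reading the expression in the paper as
$\psi_{M_1,e_i} = \operatorname{diag}(M_1^\ast e_i) - M_1^\ast \operatorname{diag}(e_i) - \operatorname{diag}(e_i) M_1$
(matching the form of $\psi_t$ introduced in Section \ref{sec:CTMCBSDE}), I would apply each of the three summands to $e_j$ separately.

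First, since $\operatorname{diag}(M_1^\ast e_i)$ is diagonal with $j$-th diagonal entry $e_j^\ast M_1^\ast e_i = e_i^\ast M_1 e_j$, I would observe
\[
\operatorname{diag}(M_1^\ast e_i)\, e_j = (e_i^\ast M_1 e_j)\, e_j.
\]
Next, $\operatorname{diag}(e_i)$ is the rank-one diagonal matrix with a single $1$ in position $(i,i)$, so $\operatorname{diag}(e_i)e_j = \delta_{ij} e_i$, which gives $M_1^\ast \operatorname{diag}(e_i)\, e_j = \delta_{ij} M_1^\ast e_i$. Similarly, $\operatorname{diag}(e_i) M_1 e_j$ retains only the $i$-th component of $M_1 e_j$ in the $i$-th slot, yielding $(e_i^\ast M_1 e_j)\, e_i$.

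Combining the three evaluations and factoring,
\[
\psi_{M_1,e_i}\, e_j = (e_i^\ast M_1 e_j)(e_j - e_i) - \delta_{ij}\, M_1^\ast e_i.
\]
Finally, because the second term vanishes unless $i=j$, on its support $M_1^\ast e_i = M_1^\ast e_j$, so $\delta_{ij} M_1^\ast e_i = \delta_{ij} M_1^\ast e_j$, which matches \eqref{eq:psielements} as stated.

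There is essentially no substantive obstacle here; the only bookkeeping subtlety is that the $\delta_{ij}$ prefactor makes the replacement of $M_1^\ast e_i$ by $M_1^\ast e_j$ (and vice versa) free, which is what reconciles the direct computation with the form written in the lemma. The identity is stated as a preparatory algebraic fact, presumably to be combined with Lemma \ref{lem:possemlemma} and \eqref{eq:M0} in Step 3 of the proof of Proposition \ref{lemma3.2} to verify that the restriction of $\psi_{Q,e_i}$ to the hyperplane $\{z : \mathbbm{1}^\ast z = 0\}$ is positive definite under \eqref{cond:q}.
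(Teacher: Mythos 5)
Your proof is correct and is essentially identical to the paper's: both evaluate the three summands of $\psi_{M_1,e_i}$ (reading the definition as $\operatorname{diag}(M_1^\ast e_i)-M_1^\ast\operatorname{diag}(e_i)-\operatorname{diag}(e_i)M_1$, i.e.\ with $\operatorname{diag}(e_i)=e_ie_i^\ast$) applied to $e_j$ term by term and then use the $\delta_{ij}$ prefactor to replace $M_1^\ast e_i$ by $M_1^\ast e_j$. No differences of substance.
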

\begin{proof}
	We can see it directly as:
	\begin{align*}
		\psi_{M_1,e_i}e_j &= \operatorname{diag}(M_1^\ast e_i)e_j -M_1^\ast e_ie_i^\ast e_j-e_ie_i^\ast M_1e_j\\
		&= e_i^\ast M_1e_j\cdot e_j-\delta_{ij}M_1^\ast e_i-e_i^\ast M_1e_j\cdot e_i= (e_i^\ast M_1e_j)(e_j-e_i) -\delta_{ij}M_1^\ast e_j.
	\end{align*}
\end{proof}

\begin{lemma}
	\label{lem:psiN-1positive}
	Let $i=1,\dots,N$ be fixed, and $M_1$ be a matrix satisfying $e_i^\ast M_1e_j>0$ for all $j\in\{1,\dots,N\}\setminus\{i\}$ and $e_i^\ast M_1\mathbbm{1}=0$ (Note that $e_i^\ast M_1e_i$ is equal to $-\sum_{j=1,j\neq i}^Ne_i^\ast M_1e_j$.) Then, $\psi_{M_1,e_i}^{(N-1)}$ is positive definite.
\end{lemma}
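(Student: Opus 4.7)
The plan is to reduce the statement to an explicit identity for the quadratic form $x^\ast \psi_{M_1,e_i}^{(N-1)} x$, which will manifestly exhibit it as a sum of nonnegative terms under the hypothesis that $e_i^\ast M_1 e_j > 0$ for $j \ne i$.

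Concretely, I would first set $a_j := e_i^\ast M_1 e_j$, so that the constraint $e_i^\ast M_1 \mathbbm{1} = 0$ reads $\sum_{j=1}^N a_j = 0$, with $a_j > 0$ for $j \ne i$ and consequently $a_i = -\sum_{j\ne i} a_j < 0$. Applying \eqref{eq:psielements} column by column, I would read off the entries of $\psi := \psi_{M_1,e_i}$: the matrix is supported on row $i$, column $i$, and the main diagonal, with $(\psi)_{jj} = a_j$ for $j \ne i$, $(\psi)_{ii} = -a_i$, and off-diagonal entries $(\psi)_{ik} = (\psi)_{ki} = -a_k$ for $k \ne i$, all others zero.

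Next I would split on whether $i = N$ or $i < N$. If $i = N$, removing the last row and column strips off the entire special row/column, leaving the diagonal matrix $\operatorname{diag}(a_1,\dots,a_{N-1})$, whose positive definiteness is immediate from $a_j > 0$. If $i < N$, I would expand $x^\ast \psi^{(N-1)} x$, separate the diagonal contribution $\sum_{j \ne i, j \le N-1} a_j x_j^2 - a_i x_i^2$ from the off-diagonal contribution $-2 x_i \sum_{j \ne i, j \le N-1} a_j x_j$, complete the square in $x_i$, and then use $\sum_{j \le N-1} a_j = -a_N$ to obtain the identity
\[
x^\ast \psi_{M_1,e_i}^{(N-1)} x \;=\; \sum_{\substack{1 \le j \le N-1 \\ j \ne i}} a_j\,(x_j - x_i)^2 \;+\; a_N\, x_i^2 .
\]
Since every coefficient on the right is strictly positive, the right-hand side vanishes only when $x_i = 0$ (from the $a_N x_i^2$ term) and then $x_j - x_i = 0$ for every remaining $j$, i.e.\ $x = 0$, establishing positive definiteness.

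The main obstacle I anticipate is purely bookkeeping: keeping track of which indices survive truncation, accounting for the sign flip at the $(i,i)$ diagonal entry, and recognizing that applying $\sum_{j=1}^N a_j = 0$ at precisely the right moment replaces the apparently troublesome $-a_i$ (which has the wrong sign) by $+a_N$ (which has the right sign). Once the completing-the-square identity above is in place, the conclusion is essentially free, and the lemma feeds directly into Lemma \ref{lem:possemlemma} via $M_1 = Q$ to deliver the positive semi-definiteness of $c\psi_{Q,e_i} - M_0$ needed to close Step~1 of the proof of Proposition \ref{lemma3.2}.
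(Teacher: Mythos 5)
Your proof is correct, and it reaches the conclusion by a cleaner route than the paper at the decisive step. Both arguments start the same way: read off the entries of $\psi_{M_1,e_i}$ from \eqref{eq:psielements} (support on row $i$, column $i$, and the diagonal, with the sign flip $-a_i$ at position $(i,i)$), dispose of the case $i=N$ by observing that truncation leaves the diagonal matrix $\operatorname{diag}(a_1,\dots,a_{N-1})$, and for $i<N$ expand $x^\ast\psi^{(N-1)}x$ into the diagonal part $\sum_{j\neq i}a_jx_j^2-a_ix_i^2$ plus the cross term $-2x_i\sum_{j\neq i}a_jx_j$. Where you diverge is in how positivity is extracted: the paper treats the expression as a quadratic polynomial in $e_i^\ast x$ with positive leading coefficient $-a_i$ and shows via the Cauchy--Schwarz inequality that its discriminant is nonpositive, using $\sum_{j=1}^{N-1}a_j=-a_N<0$; you instead complete the square exactly, substituting $-a_i=\sum_{j\neq i,\,j\le N-1}a_j+a_N$ to obtain the identity $x^\ast\psi_{M_1,e_i}^{(N-1)}x=\sum_{j\neq i,\,j\le N-1}a_j(x_j-x_i)^2+a_Nx_i^2$. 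Your sum-of-squares identity is sharper: it makes positive definiteness transparent term by term, avoids the Cauchy--Schwarz estimate entirely, and sidesteps a minor imprecision in the paper's wording (the discriminant there is claimed to be ``negative for any $x$,'' but it actually vanishes when $x_j=0$ for all $j\neq i$; the paper's conclusion still holds because the quadratic then reduces to $-a_ix_i^2>0$, whereas your identity covers all cases uniformly). The paper's discriminant argument, for its part, requires no algebraic foresight about which grouping of terms will complete the square, but in this instance your version is both shorter and more informative.
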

\begin{proof}
	Since \eqref{eq:psielements}, $e_i^\ast M_1e_j$ for $j\neq i$ is strictly positive under the assumption of this lemma. Thus $\psi_{M_1,e_N}^{(N-1)} = \operatorname{diag}(e_N^\ast M_1e_1,\dots,e_N^\ast M_1e_{N-1})$ is positive definite obviously. Let $x\in\mathbb R^{N-1}\setminus\{0\}$ and $i<N$ be fixed. 
	\begin{align}
		&x^\ast\psi_{M_1,e_i}^{(N-1)}x =\sum_{j=1}^{N-1}\sum_{k=1}^{N-1}[(\delta_{jk}-\delta_{ki})e_i^\ast M_1e_j-\delta_{ij}e_i^\ast M_1e_k]e_k^\ast xe_j^\ast x\\
		&\begin{multlined}
			=[(\delta_{ii}-\delta_{ii})e_i^\ast M_1e_i-\delta_{ii}e_i^\ast M_1e_i]e_i^\ast xe_i^\ast x \\
			+ \sum_{\substack{j=1\\j\neq i}}^{N-1}[(\delta_{ji}-\delta_{ii})e_i^\ast M_1e_j-\delta_{ij}e_i^\ast M_1e_i]e_i^\ast xe_j^\ast x\\
			+\sum_{\substack{k=1\\k\neq i}}^{N-1}[(\delta_{ik}-\delta_{ki})e_i^\ast M_1e_i-\delta_{ii}e_i^\ast M_1e_k]e_k^\ast xe_i^\ast x\\
			+\sum_{\substack{j=1\\j\neq i}}^{N-1}\sum_{\substack{k=1\\k\neq i}}^{N-1}[(\delta_{jk}-\delta_{ki})e_i^\ast M_1e_j-\delta_{ij}e_i^\ast M_1e_k]e_k^\ast xe_j^\ast x
		\end{multlined}\\
		&=-e_i^\ast M_1e_i(e_i^\ast x)^2-2\left[\sum_{\substack{j=1\\j\neq i}}^{N-1}e_i^\ast M_1e_je_j^\ast x\right]e_i^\ast x+\sum_{\substack{j=1\\j\neq i}}^{N-1}e_i^\ast M_1e_j(e_j^\ast x)^2.\label{eq:quadraticeq}
	\end{align}
	Note that $-e_i^\ast M_1e_i>0$. The discriminant of \eqref{eq:quadraticeq} as a quadratic polynomial of $e_i^\ast x$ can be evaluated in what follows:
	\begin{equation}
		\begin{aligned}
		&\left[\sum_{\substack{j=1\\j\neq i}}^{N-1}\sqrt{e_i^\ast M_1e_j}(\sqrt{e_i^\ast M_1e_j}e_j^\ast x)\right]^2+e_i^\ast M_1e_i\sum_{\substack{j=1\\j\neq i}}^{N-1}e_i^\ast M_1e_j(e_j^\ast x)^2\\
		&\le \sum_{\substack{j=1\\j\neq i}}^{N-1}e_i^\ast M_1e_j\sum_{\substack{k=1\\k\neq i}}^{N-1}e_i^\ast M_1e_j(e_j^\ast x)^2+e_i^\ast M_1e_i\sum_{\substack{j=1\\j\neq i}}^{N-1}e_i^\ast M_1e_j(e_j^\ast x)^2\\
		&=\sum_{j=1}^{N-1}e_i^\ast M_1e_j\sum_{\substack{k=1\\k\neq i}}^{N-1}e_i^\ast M_1e_j(e_j^\ast x)^2,
		\end{aligned}
		\label{eq:discriminant}
	\end{equation}
	where we have applied the Cauchy-Schwarz inequality to obtain the first inequality. The assumptions on $M_1$ leads to $\sum_{j=1}^{N-1}e_i^\ast M_1e_j=-e_i^\ast M_1e_N < 0$, and the discriminant \eqref{eq:discriminant} is negative for any $x$. Hence \eqref{eq:quadraticeq} is always positive which amounts to the positive definiteness of $\psi_{M_1,e_i}^{(N-1)}$.
\end{proof}

\paragraph{Step 3.}
Suppose that $e_i^\ast Q$ contains no elements that equal $0$. Then, $Q$ satisfies the assumptions of Lemma \ref{lem:psiN-1positive}, and the positive definiteness of $\psi_{Q,e_i}^{(N-1)}$ is obtained. $\psi_{Q,e_i}\mathbbm{1}=0$ is clear, and $M_0\mathbbm{1}=0$ follows from assumption \ref{cond:q}. Applying $M_1=\psi_{Q,e_i}\mathbbm{1}$ and $M_2=M_0$ into Lemma \ref{lem:possemlemma}, the positive semi-definiteness of $M_{i,C}$ is obtained for sufficiently large $C>0$.

\paragraph{Step 4.}
In the case of $e_i^\ast Q$ possibly containing element $0$, the following arguments are required for obtaining the desired result. To this end, We additionally introduce some notations: Denote $e_{i,N}$ as the $i$-th unit vector in $\mathbb R^N$ whose $i$-th element is 1. Note that we sometimes omit subscripts $N$ and simply write $e_i$ when they can be unambiguously determined from the context. For $N$-dimensional vector $v$, denote $\mathcal I(v) = \{n_1,\dots, n_K\}\subset\{1,\dots, N\}$ as the collection of indices of the elements that are non-zero. Similarly, for a $N\times N$ real symmetric matrix $M_1$, denote $\mathcal I(M_1)=\{n_1,\dots, n_K\}\subset\{1,\dots,N\}$ as the collection of indices of non-zero row/column vectors in $M_1$. In both cases, $n_k$ is sorted in ascending order, and $K$ means the total number. For a $N\times N$ matrix $M_1$ and a collection of indices $\mathcal J=\{n_1,\dots, n_K\}\subset\{1,\dots,N\}$, denote $M_1^{\mathcal J}$ as a $K\times K$ matrix obtained by removing the rows and column vectors that do not belong to $\mathcal J$. Equivalently, it can be defined by 
\begin{equation}
	\label{eq:M1J}
	M_1^{\mathcal J} = I_{\mathcal J,N}M_1I_{\mathcal J,N}^\ast,
\end{equation}
where $I_{\mathcal J,N}$ is a $N\times K$ matrix as 
\[
	I_{\mathcal J,N} = \begin{pmatrix}e_{n_1,N}&\ldots & e_{n_K,N}\end{pmatrix}.
\]
Note that $I_{\mathcal J,N}I_{\mathcal J,N}^\ast = I_K$. 
First, we confirm the follwing two lemmas.
\begin{lemma}
	\label{lem:reduced}
	Let $\mathcal J=\{n_1,\dots,n_K\}\subset\{1,\dots,N\}$ be a collection of indices. For any $N\times N$ matrix $M_1$, 
	\[
		e_{k,K}^\ast M_1^{\mathcal J}e_{l,K} = e_{n_k,N}^\ast M_1e_{n_l,N}\quad\text{for}\quad k,l=1,\dots,K.
	\]
\end{lemma}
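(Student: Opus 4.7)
The plan is straightforward: the lemma is essentially a restatement of what it means to form the submatrix of $M_1$ obtained by keeping only the rows and columns indexed by $\mathcal J$, so it should follow directly from the definition $M_1^{\mathcal J} = I_{\mathcal J,N} M_1 I_{\mathcal J,N}^\ast$ in \eqref{eq:M1J} together with the ``selector'' nature of $I_{\mathcal J,N}$.

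The key observation I would record first is that $I_{\mathcal J,N}$ acts as a coordinate-selection map between $\mathbb R^K$ and $\mathbb R^N$: applying it (or its transpose, depending on which side one is on) to the standard basis vector $e_{l,K}$ produces $e_{n_l,N}$. This is immediate from the block description $I_{\mathcal J,N} = (e_{n_1,N}\ \ldots\ e_{n_K,N})$, since picking out the $l$-th column just returns $e_{n_l,N}$.

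Given this selector identity, the proof is a one-line computation:
\[
e_{k,K}^\ast M_1^{\mathcal J} e_{l,K}
 \;=\; e_{k,K}^\ast I_{\mathcal J,N}\, M_1\, I_{\mathcal J,N}^\ast e_{l,K}
 \;=\; e_{n_k,N}^\ast M_1\, e_{n_l,N},
\]
where the second equality applies the selector identity once on each side (absorbing the transpose appropriately). This is exactly the claimed identity, so the lemma is established.

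I do not anticipate any obstacle here: the statement is essentially tautological once the notation in \eqref{eq:M1J} is unpacked, and the actual write-up should occupy at most a few lines. The only thing to take care of is being consistent about which side the transpose acts on when applying the selector identity, but this is purely bookkeeping.
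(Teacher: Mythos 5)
Your proposal is correct and is essentially identical to the paper's own proof: both simply unpack the definition of $M_1^{\mathcal J}$ via the selector matrix $I_{\mathcal J,N}$ and observe that its columns are the $e_{n_l,N}$. The only bookkeeping point is that with $I_{\mathcal J,N}$ of size $N\times K$ the conjugation must read $I_{\mathcal J,N}^\ast M_1 I_{\mathcal J,N}$ (as in the paper's displayed computation, though its equation \eqref{eq:M1J} writes the transposes the other way); your "absorbing the transpose appropriately" handles exactly this.
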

\begin{proof}
	It immediately follows from \eqref{eq:M1J}:
	\begin{align*}
		e_{k,K}^\ast M_1^{\mathcal J}e_{l,K} &= e_{k,K}^\ast I_{\mathcal J,N}^\ast M_1I_{\mathcal J,N}e_{l,K}\\
		&=e_{k,K}^\ast\begin{pmatrix}e_{n_1,N}^\ast\\\vdots\\e_{n_K,N}^\ast\end{pmatrix}M_1\begin{pmatrix}e_{n_1,N}&\ldots & e_{n_K,N}\end{pmatrix}e_{l,K}
		=e_{n_k,N}^\ast M_1e_{n_l,N},
	\end{align*}
	for $k=1,\dots,K$.
\end{proof}
\begin{lemma}
	\label{lem:lemmazerosum}
	Let $\mathcal J=\{n_1,\dots,n_K\}\subset\{1,\dots,N\}$ be a collection of indices and $M_1$ a $N\times N$ matrix satisfying $M_1\mathbbm{1}=0$. If $\mathcal I(M_1)\subset\mathcal J$, then $M_1^{\mathcal J}\mathbbm{1}=0$ holds.
\end{lemma}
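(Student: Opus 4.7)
The plan is to prove the statement by direct computation, leaning on the preceding Lemma \ref{lem:reduced} for the entrywise identification of $M_1^{\mathcal J}$ with submatrix entries of $M_1$. First I would unpack the definition of $\mathcal I(M_1)$: since $\mathcal I(M_1)\subset\mathcal J$, every row and column of $M_1$ whose index lies outside $\mathcal J$ is identically zero. This is the key structural fact that makes the row-sum condition survive the restriction to $\mathcal J$.

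Next, fix $k\in\{1,\dots,K\}$ and compute the $k$-th entry of $M_1^{\mathcal J}\mathbbm{1}$. Using Lemma \ref{lem:reduced}, this equals $\sum_{l=1}^K e_{n_k,N}^\ast M_1 e_{n_l,N}$. Then I would split the full row sum of $M_1$ over $\mathbb{R}^N$ as
\[
0 = e_{n_k,N}^\ast M_1\mathbbm{1} = \sum_{l=1}^K e_{n_k,N}^\ast M_1 e_{n_l,N} + \sum_{j\notin\mathcal J} e_{n_k,N}^\ast M_1 e_{j,N},
\]
and observe that the second sum vanishes because each $e_{j,N}$ with $j\notin\mathcal J$ picks out a zero column of $M_1$ (by the containment $\mathcal I(M_1)\subset\mathcal J$). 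Hence $\sum_{l=1}^K e_{n_k,N}^\ast M_1 e_{n_l,N}=0$, which is exactly $e_{k,K}^\ast M_1^{\mathcal J}\mathbbm{1}=0$. Since $k$ was arbitrary, $M_1^{\mathcal J}\mathbbm{1}=0$.

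There is no real obstacle here: the lemma is essentially a bookkeeping statement, and the only subtlety is making sure that ``non-zero column index in $\mathcal J$'' is used correctly to discard the summands outside $\mathcal J$. The proof is short and purely algebraic, fitting into just a few lines once Lemma \ref{lem:reduced} is invoked.
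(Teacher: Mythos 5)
Your proof is correct and follows essentially the same route as the paper's: both identify $e_{k,K}^\ast M_1^{\mathcal J}\mathbbm{1}$ with $\sum_{l=1}^{K} e_{n_k,N}^\ast M_1 e_{n_l,N}$ via Lemma \ref{lem:reduced}, and both use that the columns of $M_1$ indexed outside $\mathcal J$ vanish (from $\mathcal I(M_1)\subset\mathcal J$) to equate this partial sum with the full row sum $e_{n_k,N}^\ast M_1\mathbbm{1}=0$. The only cosmetic difference is that the paper adds the zero terms in while you subtract them out.
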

\begin{proof}
	Note that $M_1e_{i,N}=0$ for $i\not\in\mathcal J$ since $\mathcal I(M_1)\subset \mathcal J$. Hence,
	\begin{align*}
		e_{k,K}^\ast M_1^{\mathcal J}\mathbbm{1} &= \sum_{l=1}^Ke_{k,K}^\ast M_1^{\mathcal J}e_{l,K} = \sum_{i\in\mathcal J}e_{n_k,N}^\ast M_1e_{i,N} + 0\\
		&= \sum_{i\in\mathcal J}e_{n_k,N}^\ast M_1e_{i,N}+\sum_{i\not\in\mathcal J}e_{n_k,N}^\ast M_1e_{i,N}= \sum_{i=1}^Ne_{n_k,N}^\ast M_1e_{i,N} = e_{n_k,N}^\ast M_1\mathbbm{1} = 0,
	\end{align*}
	for $k=1,\dots,K$.
\end{proof}

Now, we proceed to show the positive semi-definiteness of $M_{i,C}$. If $e_i^\ast Q=0$, it is trivial since $M_{i,C}$ equals the zero matrix. Hereafter, we suppose that $e_i^\ast Q\neq 0$. Clearly, $M_{i,C}$ is positive semi-definite if $M_{i,C}^{\mathcal I(M_{i,C})}$ is positive semi-definite. We confirm the relationship between $\mathcal I(M_{i,C})$, $\mathcal I(e_{i}^\ast Q)$, $\mathcal I(\psi_{Q,e_{i}})$ as well as $\mathcal I(M_0)$.
\begin{lemma}
	\label{lem:nonzeroind}
	Let $N\in\mathbb N$ be fixed. ($e_{i,N}$ is abbreviated to $e_i$ in this lemma and its proof.)
	\begin{enumerate}
		\item For all $C>0$, $i,j=1,\dots,N$, 
		\begin{gather}
			\psi_{Q,e_i}e_j=0\iff e_{i}^\ast Qe_{j}=0\Longrightarrow M_0e_{j}=0,\label{eq:relation1}\\
			e_{i}^\ast Qe_{j}=0 \Longrightarrow M_{i,C}e_{j}=0.\label{eq:relation2}
		\end{gather}
		\item For sufficiently large $C>0$,
		\begin{equation}
			M_{i,C}e_{j}=0\Longrightarrow \psi_{Q,e_{i}}e_{j}=0,\label{eq:relation3}
		\end{equation}
		for $i,j=1,\dots,N$.
	\end{enumerate}
	Therefore $\mathcal I(M_{i,C})=\mathcal I(e_{i}^\ast Q) = \mathcal I(\psi_{Q,e_{i}}) \supset \mathcal I(M_0)$ holds for sufficiently large $C>0$.
\end{lemma}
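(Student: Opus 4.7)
The plan is to read each implication in the lemma as a statement about the action of $\psi_{Q,e_i}$, $M_0$, and $M_{i,C}$ on a single basis vector $e_j$, reducing everything to the identity \eqref{eq:psielements} applied with $M_1=Q$, namely
\[
\psi_{Q,e_i}e_j \;=\; (e_i^\ast Qe_j)(e_j-e_i)-\delta_{ij}Q^\ast e_i,
\]
combined with hypothesis \eqref{cond:q} and the sign structure of the (valid) Q-matrix $Q$. All inclusions advertised at the end of the statement will then follow by reading the column-wise implications of parts (1) and (2) in contrapositive form.

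First I will establish the equivalence $\psi_{Q,e_i}e_j=0\iff e_i^\ast Qe_j=0$. For $i\neq j$ it is immediate from the formula above since $e_j-e_i\neq 0$. For $i=j$ the identity collapses to $\psi_{Q,e_i}e_i=-Q^\ast e_i$, so I must show $Q^\ast e_i=0\iff e_i^\ast Qe_i=0$; here the Q-matrix property is essential, because the inequalities $e_i^\ast Qe_k\geq 0$ for $k\neq i$ together with $\sum_k e_i^\ast Qe_k=0$ force the single equation $e_i^\ast Qe_i=0$ to imply the vanishing of the entire $i$-th row of $Q$. Next I turn to $e_i^\ast Qe_j=0\Rightarrow M_0e_j=0$: plugging \eqref{eq:M0} into $M_0e_j$, I observe that each summand carries the factor $e_i^\ast\widetilde D_1^{(p)}e_j$, which vanishes by \eqref{cond:q} (with the roles of $i$ and $j$ in that hypothesis swapped). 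Again, in the diagonal case $i=j$ one first upgrades $e_i^\ast Qe_i=0$ to the full row condition and then applies \eqref{cond:q} componentwise. Combining these yields $M_{i,C}e_j = C\psi_{Q,e_i}e_j - M_0e_j = 0$ for every $C>0$, which proves \eqref{eq:relation2}.

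For part (2) I argue by contrapositive. Fix $i$ and set $J^+(i)\coloneqq\{j:\psi_{Q,e_i}e_j\neq 0\}$, a finite set. For each $j\in J^+(i)$ the reverse triangle inequality gives $\|M_{i,C}e_j\|\geq C\|\psi_{Q,e_i}e_j\|-\|M_0e_j\|$, so choosing
\[
C \;>\; \max_{1\leq i\leq N}\,\max_{j\in J^+(i)}\frac{\|M_0e_j\|}{\|\psi_{Q,e_i}e_j\|}
\]
(a finite maximum) makes $M_{i,C}e_j\neq 0$ for every such pair $(i,j)$, establishing \eqref{eq:relation3}. The concluding set identities $\mathcal I(M_{i,C})=\mathcal I(e_i^\ast Q)=\mathcal I(\psi_{Q,e_i})$ and the inclusion $\mathcal I(M_0)\subset\mathcal I(e_i^\ast Q)$ then drop out by bundling these column-wise implications together via the definition of $\mathcal I(\cdot)$.

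The only genuinely delicate step is the diagonal case $i=j$ in part (1): extracting both $\psi_{Q,e_i}e_i=0$ and $M_0e_i=0$ from the single scalar identity $e_i^\ast Qe_i=0$ requires one to first invoke Q-matrix validity to promote it to the vanishing of the whole row $i$ of $Q$, and only then to apply hypothesis \eqref{cond:q}. Everything else in the argument is direct algebra on basis vectors.
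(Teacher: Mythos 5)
Your proof is correct and follows essentially the same route as the paper: the identity $\psi_{Q,e_i}e_j=(e_i^\ast Qe_j)(e_j-e_i)-\delta_{ij}Q^\ast e_i$, the Q-matrix sign structure to handle the diagonal case, condition \eqref{cond:q} for the vanishing of $M_0e_j$, and a finite largeness condition on $C$ for part (2). The only differences are cosmetic — you choose $C$ via a norm bound where the paper argues entrywise, and you are more explicit than the paper about why validity of $Q$ is needed to pass from $e_i^\ast Qe_i=0$ to $Q^\ast e_i=0$, which is a welcome clarification rather than a deviation.
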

\begin{proof}
	\begin{enumerate}
		\item	For $i\neq j$, $\psi_{Q,e_{i}}e_{j} = (e_{i}^\ast Qe_{j})(e_{j}-e_{i})$ implies that $e_{i}^\ast Qe_{j}=0$ if and only if $\psi_{Q,e_{i}}e_j=0$. For $i=j$, $0=\psi_{Q,e_{i}}e_{i} = -Q^\ast e_{i}\iff e_{i}^\ast Qe_{i}=0$. Hence $\psi_{Q,e_i}e_j=0\iff e_{i}^\ast Qe_{j}=0$ is obtained for any $i,j$.

		Next, assume that $e_i^\ast Qe_j=0$. \eqref{cond:q} leads to $e_{i}^\ast\widetilde D_1^{(p)}e_{j}=0$ for all $p=1,\dots,d$, which yields $M_0e_j=0$. 

		Finally, \eqref{eq:relation2} can be seen using \eqref{eq:relation1}. 
		\item Take
		\[
			C > \sup\left\{\frac{e_k^\ast M_0e_{j}}{e_k^\ast \psi_{Q,e_{i}}e_{j}}:k=1,\dots,N\quad\text{and}\quad e_k^\ast \psi_{Q,e_{i}}e_{j}\neq0\right\}\vee0,
		\]
		where $\sup\emptyset=-\infty$. Assuming that $e_k^\ast\psi_{e_i}e_j\neq0$ for some $k\in\{1,\dots,N\}$,
		\[
			e_k^\ast M_{i,C}e_j = Ce_k^\ast\psi_{Q,e_i}e_j - e_k^\ast M_0e_j > 0.
		\]
		It contradicts that $e_k^\ast M_{i,C}e_j=0$.
	\end{enumerate}
\end{proof}
Denote $\mathcal I=\{n_1,\dots,n_K\}\coloneqq \mathcal I(M_{i,C}) = \mathcal I(\psi_{Q,e_{i,N}}) = \mathcal I(Q^\ast e_{i,N})$, and denote $K$ as its total number. Observe that
\begin{equation}
	\label{eq:MiCrep1}
	M_{i,C}^{\mathcal I} = C\psi_{Q,e_{i,N}}^{\mathcal I} - M_0^{\mathcal I}.
\end{equation}
As we suppose $e_i^\ast Q\neq0$, $e_i^\ast Qe_i<0$ and hence $i$ belongs to $\mathcal I$. Take $i\in\{1,\dots,K\}$ that satisfies $i=n_q$. For any $k,l=1,\dots,K$,
\begin{align*}
	e_{l,K}^\ast\psi_{Q,e_p}^{\mathcal I}e_{k,K} &= e_{n_l,N}^\ast\psi_{Q,e_p}e_{n_k,N} = e_{n_l,N}^\ast[e_{p,N}^\ast Qe_{n_k,N}(e_{n_k,N}-e_{p,N})-\delta_{p,n_k}Q^\ast e_{n_k,N}]\\
	&= e_{q,K}^\ast Q^{\mathcal I}e_{k,K}e_{l,K}^\ast(e_{k,K}-e_{q,K})-\delta_{e,k}e_{l,K}^\ast(Q^{\mathcal I})^\ast e_{k,K}\\
	&= e_{l,K}^\ast[e_{q,K}^\ast Q^{\mathcal I}e_{k,K}(e_{k,K}-e_{q,K})-\delta_{e,k}(Q^{\mathcal I})^\ast e_{k,K}] = e_{l,K}^\ast\psi_{Q^{\mathcal I},e_{q,K}}e_{k,K},
\end{align*}
where we used Lemma \ref{lem:reduced}. As a result, we obtain $\psi_{Q,e_{i,N}}^{\mathcal I} = \psi_{Q^{\mathcal I},e_{q,K}}$. Notice what follows:
\begin{itemize}
	\item $\psi_{Q^{\mathcal I},e_{q,K}}$ and $M_0^{\mathcal I}$ are symmetric.
	\item Since $\mathcal I(\psi_{Q,e_{i,N}}), \mathcal I(M_0)\subset\mathcal I$, Lemma \ref{lem:lemmazerosum} implies $\psi_{Q,e_{i,N}}^{\mathcal I}\mathbbm{1}=0$ (i.e. $\psi_{Q^{\mathcal I},e_{q,K}}\mathbbm{1}$) and $M_0^{\mathcal I}\mathbbm{1}=0$.
	\item Since $Q^{\mathcal I}=Q^{\mathcal I(e_i^\ast Q)}$, $e_q^\ast Q^{\mathcal I}e_l >0$ for $k\in\{1,\dots,K\}\setminus\{q\}$ and $e_q^\ast Q^{\mathcal I}\mathbbm{1}=0$. Applying $M_1=Q^{\mathcal I}$ to Lemma \ref{lem:psiN-1positive}, we obtain the positive definiteness of $\psi_{Q^{\mathcal I},e_{q,K}}^{(K-1)}$.
\end{itemize}
Thus we can apply $M_1=\psi_{Q^{\mathcal I},e_{q,K}}$ and $M_2=M_0^{\mathcal I}$ to Lemma \ref{lem:possemlemma}, and we obtain the positive semi-definiteness of $M_{i,C}^{\mathcal I}$. 
\section{Convergence results}
\label{sec:convergence}
In this section, we establish a convergence result for the numerical solution discussed in Section 4 to the true solution of BSDE \eqref{BMBSDE}. For simplicity, we only consider a situation with (i) one-dimensional space variable, (ii) the corresponding PDE being uniquely solvable in the classical sense, as well as (iii) a spatial discretization using central difference with constant step size i.e. $\delta x_i\equiv \Delta x>0$.

Throughout the section, the following notations are introduced;
\begin{gather}
	\begin{multlined}
		\widecheck f:[0,T]\times\mathbb R\times\mathbb R\times\mathbb R\times\mathbb R\ni(t,x,z,p,r)\\
		\mapsto\mu(x)\cdot p+\frac{\sigma^2(x)}2\cdot r+f(t,x,z,\sigma(x)\cdot p)\in\mathbb R.
	\end{multlined}\\
	U_t^{(N,k)}\text{ is a unique solutions of \eqref{TVP} for }N\text{ and }k.
\end{gather}
Denote \eqref{PDE} as 
\begin{equation}
\label{eq:B1}
\begin{cases}
  \partial_tu(t,x)+\widecheck f(t,x,u(t,x),\partial_xu(t,x),\partial_{xx}u(t,x))=0,\quad (t,x)\in[0,T]\times\mathbb R,\\
  u(T,x)=g(x),\quad x\in\mathbb R.
\end{cases}
\end{equation}
The present analysis in this section is mainly based on the textbook written by Walter et al.\cite{Walter2012}. Suppose that the following conditions are satisfied.
\begin{assumption}[\cite{Walter2012}, pp.287 and 302]
	\label{basicassumption}	
	\begin{enumerate}
		\item $\mu(x),\sigma(x)$ and $f(t,x,y,z)$ are twice continuously differentiable in all variables. $f$ and its first- and second-order derivatives are bounded and uniformly continuous in $(t,x,y,z)\in[0,T]\times\mathbb R\times B$ for any bounded set $B\subset\mathbb R^2$.
		\item For some constant $C>0$ and a continuous function $\lambda:[0,\infty)\to(0,\infty)$ satisfying
		\begin{equation}
			\label{ubound}
			\lim_{s\to\infty}\lambda(s)=\infty\quad\text{and}\quad\int_0^\infty\frac{ds}{s\cdot \lambda(s)}=\infty,
		\end{equation}
		it holds 
		\[
			f(t,x,y,0)\vee(-f(t,x,-y,0))\le C+y\cdot\lambda(y),\quad\text{for any}\quad (t,x,y)\in[0,T]\times\mathbb R\times[0,\infty).
		\]
		\item $\sigma^2(x)>0$ for any $x\in\mathbb R$.
		\item For any $M>0$, there exists $C_0>0$ and $\lambda_0:[0,\infty)\to(0,\infty)$ satisfying the following conditions:
		\begin{itemize}
			\item $\lambda_0$ is continuous and satisfies \eqref{ubound}.
			\item For $(t,x)\in[0,T]\times\mathbb R$, $|y|\le M$ and $z\ge0$,
			\[
				|z|\partial_yf(t,x,y,\sigma(x)z)\le |z|\lambda_0(z)+C_0.
			\]
		\end{itemize}
		\item $g\in C_b^2(\mathbb R)$.
	\end{enumerate}
\end{assumption}
We obtain the following lemmas. For details, see \cite{Walter2012}, Chapter IV, Section 36. 
\begin{lemma}[\cite{Walter2012}, p.292]
	\label{wellposedness}
	Cauchy problem \eqref{eq:B1} with $\mu(x),\sigma(x),f(t,x,y,z)$ and $g(x)$ satisfying  Assumption \ref{basicassumption} is uniquely solvable if $g$ is three times continuously differentiable and $g$, $\partial_xg$, $\partial_{xx}g$ and $\partial_{xxx}g$ are bounded and Lipschitz continuous.
\end{lemma}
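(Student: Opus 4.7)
The plan is to follow the standard strategy for classical solvability of semilinear parabolic Cauchy problems. First I would recast \eqref{eq:B1} as a forward problem by setting $\tilde u(t,x)\coloneqq u(T-t,x)$, yielding $\partial_t\tilde u=\widecheck f(T-t,x,\tilde u,\partial_x\tilde u,\partial_{xx}\tilde u)$ with initial datum $\tilde u(0,\cdot)=g$. The assumptions give a uniformly parabolic operator with smooth bounded principal part, so the program is: (i) produce a priori $L^\infty$ and $C^1$ bounds on any classical bounded solution; (ii) bootstrap these via linear Schauder theory to $C^{1+\alpha/2,\,2+\alpha}$ bounds; (iii) deduce existence through a fixed-point argument; (iv) prove uniqueness by a comparison estimate.

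For the a priori $L^\infty$ bound, I would use Assumption \ref{basicassumption}(2): the one-sided growth $f(t,x,y,0)\le C+y\lambda(y)$ lets one build a spatially constant supersolution $\phi(t)$ solving the scalar ODE $\phi'=C+\phi\lambda(\phi)$ with $\phi(0)=\|g\|_\infty$. The integral divergence condition in \eqref{ubound} guarantees that $\phi$ does not blow up on $[0,T]$, and the parabolic comparison principle then yields $|\tilde u|\le \phi(T)$; set $M\coloneqq \phi(T)$. For the gradient bound, I would apply the Bernstein technique: differentiate the PDE in $x$, test against $\partial_x\tilde u$, and compute the resulting evolution of $|\partial_x\tilde u|^2$. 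Assumption \ref{basicassumption}(4) controls the quadratic-in-gradient contribution $z\,\partial_yf(t,x,y,\sigma(x)z)$ by $|z|\lambda_0(z)+C_0$, and the integrability condition on $\lambda_0$ then lets one close the differential inequality, yielding $|\partial_x\tilde u|\le M_1$ depending only on $M$, $T$ and the data.

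Once $\tilde u$ and $\partial_x\tilde u$ are controlled by constants depending only on the data, the nonlinearity $\widecheck f(T-t,x,\tilde u,\partial_x\tilde u,\cdot)$ behaves as a bounded semilinear perturbation of the linear parabolic operator $\partial_t-\tfrac12\sigma^2\partial_{xx}-\mu\partial_x$. Standard linear Schauder estimates, together with the hypothesis that $g\in C_b^2$ is three times continuously differentiable with bounded Lipschitz derivatives, then upgrade $\tilde u$ to $C^{1+\alpha/2,\,2+\alpha}$ up to the initial boundary. Existence follows by the Leray-Schauder fixed point theorem applied to the map sending $v\in C_b^{0,1}$ to the solution of the linear parabolic problem obtained by freezing $(v,\partial_xv)$ inside $f$; the a priori bounds above supply the required compactness and invariant set. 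Uniqueness is immediate: if $u_1,u_2$ are two classical bounded solutions of \eqref{eq:B1}, the difference $w=u_1-u_2$ satisfies a linear parabolic equation with bounded coefficients, by the Lipschitz continuity of $f$ supplied by Assumption \ref{basicassumption}(1), and $w(T,\cdot)\equiv 0$, so the comparison principle forces $w\equiv 0$.

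The main obstacle is step (i), specifically the a priori gradient estimate. The Bernstein argument is delicate because $\partial_x\tilde u$ enters $f$ nonlinearly through the composition $f(\cdot,\sigma(x)\partial_x\tilde u)$, and the additional terms generated by differentiating $\mu(x)$ and $\sigma^2(x)$ must be absorbed into the good dissipative contribution coming from $\sigma^2\partial_{xx}(|\partial_x\tilde u|^2)$. It is precisely here that the tailored integrability of Assumption \ref{basicassumption}(4) against $\lambda_0$ is needed to keep the resulting Gr\"onwall-type inequality for $|\partial_x\tilde u|^2$ from blowing up within $[0,T]$. Carrying this estimate out cleanly, rather than invoking \cite{Walter2012}, Chapter IV, Section 36 as a black box, is the technical heart of the proof.
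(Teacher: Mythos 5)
First, note that the paper does not prove Lemma \ref{wellposedness} at all: it is quoted verbatim from \cite{Walter2012} (Chapter IV, Section 36), and the method used there is the line method --- existence is obtained by passing to the limit in the spatially semi-discretized infinite ODE system of Lemma \ref{convergence1}, with all a priori control coming from Nagumo--Westphal-type differential inequalities rather than from Schauder theory. Your route (a priori $L^\infty$ and gradient bounds, Schauder bootstrap, Leray--Schauder fixed point, comparison for uniqueness) is therefore a genuinely different proof strategy. Your reading of the hypotheses is accurate and shows why they are shaped the way they are: the Osgood condition \eqref{ubound} on $\lambda$ is exactly what keeps the scalar majorant $\phi'=C+\phi\lambda(\phi)$ from blowing up on $[0,T]$, and item 4 of Assumption \ref{basicassumption} is tailored so that the Gr\"onwall inequality for $\sup_x|\partial_x u|^2$ closes (after the substitution $w=z^2$ the divergence of $\int ds/(s\lambda_0(s))$ is precisely what is needed). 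These are the same two a priori estimates that drive Walter's argument, just implemented at the PDE level instead of the semi-discrete level.

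That said, as written the proposal has genuine gaps. The most serious is the one you flag yourself: the Bernstein gradient estimate is only described, not carried out, and it is the technical heart of the existence proof; a plan that defers its hardest step is not yet a proof. Second, you assert that the operator is ``uniformly parabolic with smooth bounded principal part,'' but item 3 of Assumption \ref{basicassumption} gives only the pointwise condition $\sigma^2(x)>0$, and items 1--5 bound $f$ and $g$ but not $\mu$ or $\sigma$; without a uniform lower bound on $\sigma^2$ and growth control on $\mu,\sigma$ at infinity, the global Schauder estimates, the compactness needed for Leray--Schauder on the unbounded domain $\mathbb R$, and even the comparison/Phragm\'en--Lindel\"of arguments you invoke for the $L^\infty$ bound and for uniqueness all require additional justification (uniqueness of bounded solutions of linear parabolic equations on $\mathbb R$ can fail for rapidly growing drift). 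Walter's line-method proof sidesteps some of this because each semi-discrete system is an ODE in $\ell^\infty$ where only differential inequalities, not elliptic regularity, are needed. If you want to complete your route you should either add the implicit boundedness/nondegeneracy hypotheses on $\mu,\sigma$ (which the paper in fact uses later, e.g.\ $\|\mu\|_\infty,\|\sigma\|_\infty$ in the proof of Lemma \ref{convergence2}) or localize the Schauder and fixed-point steps and patch with a diagonal argument.
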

\begin{lemma}[\cite{Walter2012}, p.301]
	\label{convergence1}
	If \eqref{eq:B1} satisfying Assumption \ref{basicassumption} (not necessarily the case in Lemma \ref{wellposedness}) admits a unique solution $u$, then for every $\Delta x>0$, the infinite system of ODEs,
	\begin{equation}
		\label{infiniteODE}
		\begin{cases}
		\displaystyle\frac{de_i^\ast U_t^{(\infty,\Delta x)}}{dt} = \widecheck f(t,i\Delta x,e_i^\ast U_t^{(\infty,\Delta x)},e_i^\ast D_1U_t^{(\infty,\Delta x)},e_i^\ast D_2U_t^{(\infty,\Delta x)}),\\
		e_i^\ast U_T^{(\infty,\Delta x)} = g(i\Delta x),
		\end{cases} (t,i)\in[0,T]\times\mathbb Z,
	\end{equation}
	which is derived from the spatial discretization described in Section 4 with step size $\Delta x$, admits a unique solution $U^{(\infty,\Delta x)}:[0,T]\ni t\mapsto U_t^{(\infty,\Delta x)} = (e_i^\ast U_t^{(\infty,\Delta x)})_{i\in\mathbb Z}\in l^\infty$. Here, $l^\infty$ is the Banach space consisting of all real sequences $x=(x_i)_{i\in\mathbb Z}$ with finite supremum norm $\|x\|_\infty = \sup_{i\in\mathbb Z}|x_i|$, and 
	\begin{align*}
		e_i^\ast D_1U_t^{(\infty,\Delta x)} &\coloneqq \frac{-1}{2\Delta x}e_{i-1}^\ast U_t^{(\infty,\Delta x)} + \frac{1}{2\Delta x}e_{i+1}^\ast U_t^{(\infty,\Delta x)},\\
		e_i^\ast D_2U_t^{(\infty,\Delta x)} &\coloneqq \frac{1}{\Delta x^2}e_{i-1}^\ast U_t^{(\infty,\Delta x)}+\frac{-2}{\Delta x^2}e_i^\ast U_t^{(\infty,\Delta x)} + \frac{1}{\Delta x^2}e_{i+1}^\ast U_t^{(\infty,\Delta x)},
	\end{align*}
	for $i\in\mathbb Z$. Furthermore, $U^{(\infty,\Delta x)}$ converges to $u$ in the following sense; for any compact set $K\subset\mathbb R$,
	\[
		\lim_{k\to0}\sup_{\substack{t\in[0,T],i\Delta x\in\mathbb Z\\i\Delta x\in K}}|u(t,i\Delta x)-e_i^\ast U_t^{(\infty,\Delta x)}|=0.
	\]
\end{lemma}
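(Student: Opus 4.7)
The plan is to prove the two claims separately, using at each step the parabolic structure of $\widecheck f$ (namely $\sigma^2>0$) together with the structural controls in Assumption~\ref{basicassumption}.

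For existence and uniqueness in $l^\infty$, I would exhaust $\mathbb Z$ by finite truncations of \eqref{infiniteODE} on $\{-N,\dots,N\}$ with Dirichlet boundary data. Each truncated system is well-posed on $[0,T]$ by local Lipschitz continuity of $\widecheck f$ in $(z,p,r)$ (Assumption~\ref{basicassumption}(1)), and a uniform-in-$N$ $l^\infty$-bound follows by comparing $\pm e_i^\ast U_t^{(N,\Delta x)}$ to the scalar ODE $\dot w = -(C+w\,\lambda(w))$ started at $w(T)=\|g\|_\infty$. The divergence condition on $\lambda$ in Assumption~\ref{basicassumption}(2) keeps $w$ finite on $[0,T]$, and a discrete maximum principle (the second-difference term has the correct sign at an interior extremum because $\sigma^2>0$) transfers the scalar bound to the system. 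Componentwise equicontinuity of $\{U^{(N,\Delta x)}\}_N$ is immediate from the ODE together with the $l^\infty$-bound, so a diagonal Arzel\`a--Ascoli extraction produces a limit $U^{(\infty,\Delta x)}\in C([0,T];l^\infty)$ satisfying \eqref{infiniteODE} pointwise in $i$; uniqueness then follows by applying the same maximum principle to the difference of two solutions.

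For the convergence claim, parabolic interior regularity upgrades the classical solution $u$ to $C^{1,4}_{\mathrm{loc}}([0,T]\times\mathbb R)$ under Assumption~\ref{basicassumption}. Setting $e_i(t):=u(t,i\Delta x)-e_i^\ast U_t^{(\infty,\Delta x)}$ and Taylor-expanding $u$ at the grid nodes, the mean-value theorem applied to $\widecheck f$ shows that $e_i$ satisfies a linear discrete parabolic equation whose local truncation error $\tau_i(t)$ is $O(\Delta x^2)$ uniformly over any compact $K'\subset\mathbb R$, with constant depending on $\|\partial_{xxx}u\|_{L^\infty([0,T]\times K')}$ and $\|\partial_{xxxx}u\|_{L^\infty([0,T]\times K')}$. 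The principal obstacle is converting this local consistency bound into the advertised uniform-on-$K$ convergence while $e_i$ lives on the unbounded grid $\mathbb Z$: a naive $l^\infty$ Gr\"onwall argument fails because derivatives of $u$ need not be globally controlled, so $\tau_i$ need not be globally small. Following the strategy in \cite{Walter2012}, I would construct a space-time barrier supersolution of the form $\phi_i(t)=A(T-t)\Delta x^2 + B\,\psi(i\Delta x)$, with $\psi$ a mild weight (e.g.\ $\psi(x)=\cosh(\varepsilon x)$ for small $\varepsilon>0$) dominating both $|u|$ and $|U^{(\infty,\Delta x)}|$ outside $K'$, apply a weighted discrete maximum principle on $[0,T]\times\{i:i\Delta x\in K'\}$ to obtain $|e_i(t)|\le\phi_i(t)$, and then send first $\Delta x\to 0$ (killing the $A$-term) and then $B\to 0$ (with $K'$ enlarged accordingly) to conclude uniform convergence on $K$.
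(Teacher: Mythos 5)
First, a point of reference: the paper does not prove Lemma \ref{convergence1} at all --- it is imported verbatim from \cite{Walter2012} (p.~301; the text points the reader to Chapter IV, Section 36 of that book), so there is no in-paper argument to compare yours against. Judged on its own terms, your reconstruction follows the right general template (finite truncation, a priori bound, compactness, comparison/barrier for the error), but it contains a genuine gap that runs through every step where you invoke a discrete maximum principle. For the operator $\mu(x_i)e_i^\ast D_1U+\tfrac{\sigma^2(x_i)}{2}e_i^\ast D_2U$ the off-diagonal coefficients are $\tfrac{\sigma^2(x_i)}{2\Delta x^2}\pm\tfrac{\mu(x_i)}{2\Delta x}$, which are nonnegative only under the mesh condition $\Delta x\le\sigma^2(x_i)/|\mu(x_i)|$ --- exactly condition \eqref{cond} of Proposition \ref{3.1}. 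Your justification, ``the second-difference term has the correct sign at an interior extremum because $\sigma^2>0$,'' is not sufficient: at a discrete extremum the centered first difference does not vanish (one only gets $|e_i^\ast D_1U|\le\tfrac{\Delta x}{2}|e_i^\ast D_2U|$), so the convection term can overwhelm the diffusion term when $\Delta x|\mu|>\sigma^2$, and the lemma is asserted for \emph{every} $\Delta x>0$. You must either impose and carry this mesh restriction, or abandon the maximum principle in favor of a Gr\"onwall/Picard argument in $l^\infty$ (once an a priori bound is in hand, the right-hand side of \eqref{infiniteODE} is Lipschitz from a ball of $l^\infty$ into $l^\infty$, which also yields uniqueness more cheaply than your max-principle step). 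Relatedly, your comparison of $\pm e_i^\ast U_t$ with $\dot w=-(C+w\lambda(w))$ uses only Assumption \ref{basicassumption}(2), which controls $f(t,x,y,0)$; at a discrete extremum the gradient slot of $f$ equals $\sigma(x_i)e_i^\ast D_1U\neq0$, so the Nagumo-type control of the gradient dependence in Assumption \ref{basicassumption}(4) is indispensable here, and your sketch never uses it.

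The second gap is in the consistency step of the convergence argument. Assumption \ref{basicassumption} provides only $C^2$ coefficients and $g\in C_b^2$, so the asserted upgrade of $u$ to $C^{1,4}_{\mathrm{loc}}$ and the resulting $O(\Delta x^2)$ local truncation error are not justified as stated. They are also unnecessary: the lemma claims convergence without a rate, and uniform continuity of $\partial_xu$ and $\partial_{xx}u$ on compact sets already makes the truncation error $o(1)$ as $\Delta x\to0$, which is all your barrier construction needs. The barrier itself (with the order of limits $\Delta x\to0$ first for fixed $B$ and $K'$, then $B\to0$ with $K'$ enlarged) is sound in outline, but it again rests on the weighted discrete maximum principle and therefore inherits the mesh-condition issue above.
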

Note that Walter et al. \cite{Walter2012} do not consider results on the convergence of ``truncated'' finite systems of ODEs \eqref{TVP} to \eqref{infiniteODE}. Fortunately, it can be carried out by the standard diagonalization argument using the Arzel\`a-Ascoli theorem. To this end, suppose the following Lipschitz condition additionally.
\begin{assumption}
	\label{globallipshcitz}
	There exists $L>0$ such that 
	\[
		|f(t,x,y,z)-f(t,x,y',z')| \le L(|y-y'|+|z-z'|)
	\]
	for any $(t,x)\in[0,T]\times\mathbb R,y,y',z,z'\in\mathbb R$.
\end{assumption}
It leads to the Lipschitz continuity of $F$ defined by \eqref{eq:definitionF}, and \eqref{TVP} admits a unique solution $U^{(N,\Delta x)}$. Then, we define $\overline U^{(N,\Delta x)} = (e_i^\ast \overline U^{(N,\Delta x)})_{i\in\mathbb Z}\in l^\infty$ for $N\in\mathbb N$ as 
\[
	\displaystyle e_i^\ast\overline U^{(N,\Delta x)} = 
	\begin{cases}
		e_N^\ast U^{(N,\Delta x)},&\text{if}\quad i=N+1,N+2,\dots,\\
		e_i^\ast U^{(N,\Delta x)},&\text{if}\quad i=-N,-N+1,\dots,N-1,N,\\
		e_{-N}^\ast U^{(N,\Delta x)},&\text{if}\quad i=-N-1,-N-2,\dots.
	\end{cases}
\]
\begin{lemma}
	\label{convergence2}
	Suppose that Assumption \ref{basicassumption} and \ref{globallipshcitz} hold, and that Cauchy problem \eqref{eq:B1} admits a unique solution $u$. Then, for any $\Delta x>0$,
	\[
		\lim_{N\to\infty}\sup_{t\in[0,T],-N\le i\le N}|e_i^\ast U_t^{(\infty,\Delta x)}-e_i^\ast\overline U_t^{(N,\Delta x)}|_\infty = 0.
	\]
	Here, $U_t^{(\infty,\Delta x)}\in l^\infty$ is a unique solution of \eqref{infiniteODE}.
\end{lemma}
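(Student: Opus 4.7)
The plan is to combine an Arzelà--Ascoli style compactness argument with a diagonal extraction across the countable index set $\mathbb{Z}$, identify the resulting subsequential limit via the uniqueness statement in Lemma \ref{convergence1}, and then promote coordinate-wise convergence to the joint uniformity required. The pivotal observation is that for any fixed $i \in \mathbb{Z}$ and every $N$ with $N > |i|$, the $i$-th component equation in \eqref{TVP} involves only the entries indexed $i-1, i, i+1$ and has exactly the same form as the $i$-th equation of \eqref{infiniteODE}; the truncation perturbs only the two boundary rows at $\pm N$. Hence any subsequential coordinate-wise limit of $\overline U^{(N,\Delta x)}$ automatically satisfies \eqref{infiniteODE} at every interior index.

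First I would establish a uniform sup-norm bound $\sup_{N,\,t \in [0,T]} \|U^{(N,\Delta x)}_t\|_\infty \le C_0$ depending on $T$, $\Delta x$, $L$, and $\sup_x|g(x)|$ but not on $N$. This follows from a backward Gronwall inequality applied to $\dot U^{(N,\Delta x)}_t = -Q U^{(N,\Delta x)}_t - F(t, U^{(N,\Delta x)}_t)$ with terminal data $\|G\|_\infty \le \sup_x|g(x)|$, together with the facts that the $l^\infty \to l^\infty$ row-sum norm of $Q$ and the Lipschitz constant of $F$ (coming from Assumption \ref{globallipshcitz} and the bounded row sums of $D_1$) depend on $\Delta x$ but not on $N$. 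The same bound transfers to $\overline U^{(N,\Delta x)}$ by construction, and hence $|\tfrac{d}{dt} e_i^\ast \overline U^{(N,\Delta x)}_t|$ is bounded uniformly in $N \ge |i|$ and in $t$. For each fixed $i$, the family $\{t \mapsto e_i^\ast \overline U^{(N,\Delta x)}_t\}_N$ is therefore uniformly bounded and Lipschitz-equicontinuous on $[0,T]$.

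Next, a standard diagonal extraction using Arzelà--Ascoli on each coordinate yields a subsequence $N_k \to \infty$ and a map $W : [0,T] \to l^\infty$ such that $e_i^\ast \overline U^{(N_k,\Delta x)}_t \to W_i(t)$ uniformly in $t \in [0,T]$ for every $i \in \mathbb{Z}$. Passing to the limit in the integrated form of the $i$-th coordinate ODE (valid for all $k$ large enough that $N_k > |i|$), using continuity of $\widecheck f$ and dominated convergence, shows that $W$ solves \eqref{infiniteODE}. The uniqueness assertion in Lemma \ref{convergence1} then forces $W = U^{(\infty,\Delta x)}$, and a standard subsequence-uniqueness argument promotes this to full-sequence convergence: $\sup_{t \in [0,T]} |e_i^\ast \overline U^{(N,\Delta x)}_t - e_i^\ast U^{(\infty,\Delta x)}_t| \to 0$ for every fixed $i \in \mathbb{Z}$.

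The hard part will be upgrading coordinate-wise convergence to the joint uniformity over the growing range $-N \le i \le N$ appearing in the statement. My plan is to argue by contradiction: assume there exist $\epsilon_0 > 0$ and sequences $N_k \to \infty$, $|i_k| \le N_k$, $t_k \in [0,T]$ such that the difference at $(t_k, i_k)$ stays $\ge \epsilon_0$. If $(i_k)$ is bounded, extracting a stationary subsequence $i_k \equiv i_*$ together with a convergent $t_k \to t_*$ and invoking the Lipschitz-in-$t$ equicontinuity already established contradicts the coordinate-wise convergence. If $|i_k| \to \infty$, I would perform a translation trick: view both $e_{i_k}^\ast \overline U^{(N_k,\Delta x)}$ and $e_{i_k}^\ast U^{(\infty,\Delta x)}$ as the zeroth component of solutions to shifted systems with translated terminal data $g(\cdot + i_k \Delta x)$ and with the truncation boundary pushed to the relative position $\pm N_k - i_k$; applying Arzelà--Ascoli once more to the shifted sequences and again appealing to the uniqueness of solutions to the limiting infinite system yields the contradiction. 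Keeping careful track of how the boundary-modified rows at $\pm N_k$ influence indices drifting to infinity is where the technical care will be concentrated.
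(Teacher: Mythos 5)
Your first three paragraphs reproduce, in substance, the paper's own argument: a Gronwall-type uniform sup-norm bound, uniform bounds on the time derivatives giving Lipschitz equicontinuity, Arzel\`a--Ascoli with a diagonal extraction over $i\in\mathbb Z$, identification of the subsequential limit with $U^{(\infty,\Delta x)}$ via the uniqueness in Lemma \ref{convergence1} (or, more simply, via Picard--Lindel\"of in $l^\infty$, since the right-hand side of \eqref{infiniteODE} is globally Lipschitz on $l^\infty$ with constant depending only on $\Delta x$), and a subsequence-of-a-subsequence argument to get full-sequence convergence. This part is sound, and you are in fact more careful than the paper: a diagonal extraction delivers convergence that is uniform in $t$ for each \emph{fixed} $i$ (equivalently, uniform over compact sets of indices), not convergence in the $l^\infty$-norm uniformly in $t$, because bounded sets in $l^\infty$ are not relatively compact and the classical Arzel\`a--Ascoli theorem the paper invokes does not apply to $l^\infty$-valued functions without pointwise precompactness.

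The genuine gap is your fourth paragraph, and it cannot be closed by the translation trick. If the offending indices track the truncation boundary, say $i_k=N_k$, then after recentering, the zeroth coordinate of the shifted truncated solution satisfies the boundary-modified equation $\frac{d}{dt}y=-f(t,N_k\Delta x,y,0)$ for \emph{every} $k$ (since $e_{\pm N}^\ast D_1=e_{\pm N}^\ast D_2=0$ in \eqref{D}--\eqref{D2}), so any subsequential limit solves that scalar ODE rather than the zeroth interior equation of the shifted infinite system; the two limits need not agree and no contradiction arises. Indeed the uniform claim over $-N\le i\le N$ fails in general at $i=\pm N$: take $f\equiv0$, $\mu\equiv0$, $\sigma\equiv1$, so that $e_N^\ast \overline U_t^{(N,\Delta x)}=g(N\Delta x)$ for all $t$, while $e_N^\ast U_t^{(\infty,\Delta x)}$ is obtained by evolving $g$ under the discrete heat semigroup and differs from $g(N\Delta x)$ by an amount of order one whenever $g$ is not affine near $N\Delta x$. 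What your argument (and the paper's) actually establishes is $\lim_{N\to\infty}\sup_{t\in[0,T]}\max_{i\in J}|e_i^\ast U_t^{(\infty,\Delta x)}-e_i^\ast\overline U_t^{(N,\Delta x)}|=0$ for every finite $J\subset\mathbb Z$, and this weaker statement is all that Theorem \ref{thm:convergence} requires, since there the supremum runs only over $i$ with $i\Delta x$ in a fixed compact set. You should therefore state and prove the lemma in that local form rather than attempt the uniform version.
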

\begin{proof}
	Let $k>0$ be fixed. 
	\paragraph{Uniform boundedness of $(\overline U_t^{(N,\Delta x)})_{N=1}^\infty$.}
	Note that
	\begin{equation}
		\label{eq:B4}
		\sup_{N\in\mathbb N}|e_i^\ast U_T^{(N,\Delta x)}|\le\|g(x)\|_\infty,\quad \sup_{(t,x)\in[0,T]\times\mathbb R}|f(t,x,0,0)|<C,
	\end{equation}
	which follow from 5 and 2 in Assumption \ref{basicassumption}, respectively. For $N\in\mathbb N$ and $i=\pm N$, 2 in Assumption \ref{basicassumption} and the Lipschitz continuity \ref{globallipshcitz} lead to
	\[
		|f(t,i\Delta x,e_i^\ast U_t^{(N,\Delta x)},\sigma(i\Delta x)e_i^\ast D_1U_t^{(N\Delta x)}))|=|f(t,i\Delta x,e_i^\ast U_t^{(N,\Delta x)},0)| \le C + L|e_i^\ast U_t^{(N,\Delta x)}|.
	\]
	These estimates imply
	\begin{equation}
		\label{eq:B5}
		|e_i^\ast U_t^{(N,\Delta x)}| \le C+\|g(x)\|_\infty +L\int_t^T|e_i^\ast U_s^{(N,\Delta x)}|ds\quad\text{for}\quad i=\pm N.
	\end{equation}
	Applying the Gr\"onwall inequality to \eqref{eq:B5}, we obtain 
	\begin{equation}
		|e_{-N}^\ast U_t^{(N,\Delta x)}|\vee|e_N^\ast U_t^{(N,\Delta x)}|\le e^{LT}\cdot[C+\|g(x)\|_\infty]\eqqcolon c_1,
	\end{equation}
	for any $N\in\mathbb N$ and $t\in[0,T]$. For $N>i>-N$, notice that using \eqref{eq:B4} and the Lipschitz continuity, 
	\begin{align*}
		|f(t,i\Delta x,e_i^\ast U_t^{(N,\Delta x)},&\sigma(i\Delta x)e_i^\ast D_1U_t^{(N,\Delta x)})|\\
		&\le C + L\left[|e_i^\ast U_t^{(N,\Delta x)}|+\|\sigma\|_\infty\frac{|e_{i+1}^\ast U_t^{(N,\Delta x)}|+|e_{i-1}^\ast U_t^{(N,\Delta x)}|}{2\Delta x}\right]\\
		&\le (C\vee L\vee L\frac{\|\sigma\|_\infty}{2\Delta x})[1+|e_i^\ast U_t^{(N,\Delta x)}|+|e_{i+1}^\ast U_t^{(N,\Delta x)}|+|e_{i-1}^\ast U_t^{(N,\Delta x)}|],
	\end{align*}
	and
	\begin{multline*}
			\mu(ki)\frac{|e_{i+1}^\ast U_t^{(N,\Delta x)}|+|e_{i-1}^\ast U_t^{(N,\Delta x)}|}{2k}+\frac{\sigma^2(i\Delta x)}2\frac{2|e_{i+1}^\ast U_t^{(N,\Delta x)}|+|e_i^\ast U_t^{(N,\Delta x)}|+2|e_{i-1}^\ast U_t^{(N,\Delta x)}|}{2k^2}\\
			\le 2\left(\frac{\|\mu\|_\infty}{2\Delta x}\vee\frac{\|\sigma\|^2}{4\Delta x^2}\right)\sum_{j=i-1}^{i+1}|e_j^\ast U_t^{(N,\Delta x)}|.
	\end{multline*}
	Let $C'\coloneqq C\vee L\vee L\frac{\|\sigma\|_\infty}{2\Delta x}$, $C''=2\left(\frac{\|\mu\|_\infty}{2k}\vee\frac{\|\sigma\|^2}{4\Delta x^2}\right)$, and $c_2\coloneqq C'+C''$. Since  
	\begin{align*}
		|e_i^\ast U_t^{(N,\Delta x)}| &\le \|g(x)\|_\infty + C'T+c_2\int_t^T\sum_{m=i-1}^{i+1}|e_m^\ast U_s^{(N,\Delta x)}|ds\\
		&\le \|g(x)\|_\infty + C'T+c_2\int_t^T\sup_{|j|<N}\sum_{m=j-1}^{j+1}|e_m^\ast U_s^{(N,\Delta x)}|ds\\
		&\le \|g(x)\|_\infty + C'T+2c_2c_1T + 3c_2\int_t^T\sup_{|j|<N}|e_j^\ast U_s^{(N,\Delta x)}|ds
	\end{align*}
	for $N>i>-N$, we obtain 
	\[
		\sup_{t\in[0,T],|i|<N}|e_i^\ast U_t^{(N,\Delta x)}| \le e^{3c_2T}[\|g(x)\|_\infty + C'T+2c_2c_1T]\eqqcolon c_3.
	\]
	Therefore, $\sup_{t\in[0,T]}\displaystyle\|\overline U_t^{(N,\Delta x)}\|_\infty = \sup_{|i|\le N}|e_i^\ast U_t^{(N,\Delta x)}|\le c_1\vee c_3\eqqcolon c_4$.
	
	\paragraph{Uniform boundedness of $(\frac{dU_t^{(N,\Delta x)}}{dt})_{N=1}^\infty$}
	It follows from
	\[
		\left|\frac{de_i^\ast U_t^{(N,\Delta x)}}{dt}\right| \le C+ L|e_i^\ast U_t^{(N,\Delta x)}| \le C+ Lc_4
	\]
	for $i=\pm N$, and
	\[
		\left|\frac{de_i^\ast U_t^{(N,\Delta x)}}{dt}\right| \le C' + 2c_2c_1+3c_2\sup_{|i|<N}|e_i^\ast U_s^{(N,\Delta x)}|\le C' + 2c_2c_1+3c_2c_4
	\]
	for $N>i>-N$. $(\frac{dU_t^{(N,\Delta x)}}{dt})_{N=1}^\infty$ has an uniform upper bound $c_5\coloneqq (C+ Lc_4)\vee(C'+2c_2c_1+3c_2c_4)$.

	\paragraph{Equicontinuity of $(U_t^{(N,\Delta x)})_{N=1}^\infty$}
	It follows from the mean value theorem and the uniform boundedness of $(\frac{dU_t^{(N,\Delta x)}}{dt})_{N=1}^\infty$. Precisely, for any $t,t'\in[0,T]$, $-N\le i\le N$, and $N\in\mathbb N$,
	\[
		|U_t^{(N,\Delta x)}e_i-U_{t'}^{(N,\Delta x)}e_i|\le\sup_{s\in[0,T]}\left|\frac{dU_s^{(N,\Delta x)}e_i}{ds}\right||t-t'|<c_5|t-t'|,
	\]
	which immediately leads to the equicontinuity.

	\paragraph{Equicontinuity of $(\frac{dU_t^{(N,\Delta x)}}{dt})_{N=1}^\infty$.}
	Let $s,t\in[0,T]$ be fixed. Since $(\frac{dU_t^{(N,\Delta x)}}{dt})_{N=1}^\infty$ is uniformly bounded, using the mean value theorem,
	\begin{equation}
		\label{4-1}
		\begin{multlined}
		|f(s,i\Delta x,U_s^{(N,\Delta x)}e_i,\sigma(i\Delta x)e_i^\ast D_1U_s^{(N,\Delta x)})-f(s,i\Delta x,e_i^\ast U_t^{(N,\Delta x)},\sigma(i\Delta x)e_i^\ast D_1U_t^{(N,\Delta x)})|\\
		\le L\left(1\vee \frac{\|\sigma\|_\infty}\Delta x\right)\sum_{j=i-1}^{i+1}|e_j^\ast U_s^{(N,\Delta x)}-e_j^\ast U_t^{(N,\Delta x)}|\\
		=\underbrace{3Lc_5\left(1\vee \frac{\|\sigma\|_\infty}\Delta x\right)}_{\eqqcolon c_6}\cdot|s-t|= c_6|s-t|
		\end{multlined}
	\end{equation}
	and
	\begin{equation}
		\label{4-2}
		\begin{split}
		&|f(s,i\Delta x,e_i^\ast U_t^{(N,\Delta x)},\sigma(i\Delta x)e_i^\ast D_1U_t^{(N,\Delta x)})-f(t,i\Delta x,e_i^\ast U_t^{(N,\Delta x)},\sigma(i\Delta x)e_i^\ast D_1U_t^{(N,\Delta x)})|\\
		&\le\sup_{t'\in[0,T]}|\partial_tf(t',i\Delta x,e_i^\ast U_t^{(N,\Delta x)},\sigma(i\Delta x)e_i^\ast D_1U_t^{(N,\Delta x)})|\cdot|s-t|\\
		&\le \sup_{(t',x,y,z)\in[0,T]\times\mathbb R^3}|\partial_tf(t',x,y,z)|\cdot|s-t| < c_7|s-t|.
		\end{split}
	\end{equation}
	Using \eqref{4-1}, \eqref{4-2} and the triangle inequality,
	\begin{align*}
		&\left|\frac{de_i^\ast U_s^{(N,\Delta x)}}{dt}-\frac{de_i^\ast U_t^{(N,\Delta x)}}{dt}\right|\\
		&\begin{multlined}
		\le|f(s,i\Delta x,e_i^\ast U_s^{(N,\Delta x)},\sigma(i\Delta x)e_i^\ast D_1U_s^{(N,\Delta x)})-f(t,i\Delta x,e_i^\ast U_t^{(N,\Delta x)},\sigma(i\Delta x)e_i^\ast D_1U_t^{(N,\Delta x)})|\\
		+ C''\left[|e_{i+1}^\ast U_s^{(N,\Delta x)}|+|e_i^\ast U_s^{(N,\Delta x)}|+|e_{i-1}^\ast U_s^{(N,\Delta x)}|\right]
		\end{multlined}\\
		&\le [C''+c_6+c_7]\cdot|s-t|,
	\end{align*}
	from which $(\frac{dU_t^{(N,\Delta x)}}{dt})_{N=1}^\infty$ enjoys the equicontinuity.

	\paragraph{Convergence.}
	Let $(N_n)_{n=1}^\infty$ be an arbitrary subsequence of $\mathbb N$. Applying the Arzel\`a-Ascoli theorem (\cite{Munkres2000}, P.290) guarantees the existence of convergent subsequences $U_t^{(N_{n(j)},\Delta x)}$ and $\displaystyle\frac{dU_t^{(N_{n(j)},\Delta x)}}{dt}$. Let $U_t^{(N_{n(\infty)},\Delta x)}$ and $\displaystyle\frac{dU_t^{(N_{n(\infty)},\Delta x)}}{dt}$ be the limit functions of $t$ in the sense as
	\[
		\begin{aligned}	
			\lim_{j\to\infty}\sup_{t\in[0,T]}\|U_t^{(N_{n(\infty)},\Delta x)}-U_t^{(N_{n(j)},\Delta x)}\|_\infty&=0,\\
			\lim_{j\to\infty}\sup_{t\in[0,T]}\left\|\frac{dU_t^{(N_{n(\infty)},\Delta x)}}{dt}-\frac{dU_t^{(N_{n(j)},\Delta x)}}{dt}\right\|_\infty&=0.
		\end{aligned}
	\]
	Using the triangle inequality, 
	\begin{align*}
		&\left|\frac{de_i^\ast U_t^{(N_{n(\infty)},\Delta x)}}{dt}-\frac{de_i^\ast U_t^{(N_{n(j)},\Delta x)}}{dt}\right|\\
		&=\left|\frac{de_i^\ast U_t^{(N_{n(\infty)},\Delta x)}}{dt}-\widecheck f(t,i\Delta x,e_i^\ast U_t^{(N_{n(j)})},e_i^\ast D_1U_t^{(N_{n(j)})},e_i^\ast D_2U_t^{(N_{n(j)})})\right|\\
		&\begin{multlined}
			\ge-c_8\sup_{t\in[0,T]}\|U_t^{(N_{n(\infty)})}-U_t^{(N_{n(j)})}\|_\infty\\
			+\left|\frac{de_i^\ast U_t^{(N_{n(\infty)},\Delta x)}}{dt}-\widecheck f(t,i\Delta x,e_i^\ast U_t^{(N_{n(\infty)})},e_i^\ast D_1U_t^{(N_{n(\infty)})},e_i^\ast D_2U_t^{(N_{n(\infty)})})\right|.
		\end{multlined}
	\end{align*}
	Considering $j\to\infty$ yields
	\[
		\frac{de_i^\ast U_t^{(N_{n(\infty)},\Delta x)}}{dt}=\widecheck f(t,i\Delta x,e_i^\ast U_t^{(N_{n(\infty)})},\sigma(i\Delta x)e_i^\ast D_1U_t^{(N_{n(\infty)})},e_i^\ast D_2U_t^{(N_{n(\infty)},\Delta x)})
	\]
	for $t\in[0,T]$ and $i\in\mathbb Z$, which implies that $U_t^{(N_{n(\infty)},\Delta x)}$ solves \eqref{infiniteODE}. Using a uniqueness result in Lemma \ref{wellposedness}, we obtain $U^{(\infty,\Delta x)}=U^{(N_{n(\infty)},\Delta x)}$. As $(N_j)_{j=1}^\infty$ is arbitrary, we obtain the desired result.
\end{proof}

\begin{theorem}
	Suppose that Assumption \ref{basicassumption}, \ref{globallipshcitz} and \eqref{eq:B1} admits a unique solution $u$. For any compact set $K\subset\mathbb R$, it holds
	\[
		\lim_{\Delta x\to0}\lim_{N\to\infty}\sup_{\substack{-N_0\le i\le N_0\\i\in\mathbb Z,i\Delta x\in K}}|u(t,i\Delta x)-e_i^\ast U_t^{(N,\Delta x)}| = 0.
	\]
\end{theorem}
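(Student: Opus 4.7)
The strategy is to interpose the ``infinite grid'' solution $U_t^{(\infty,\Delta x)}$ provided by Lemma \ref{convergence1} between the truncated solution $U_t^{(N,\Delta x)}$ and the PDE solution $u$, and then invoke the two lemmas already established in this appendix in the appropriate order. Fix a compact set $K\subset\mathbb R$ and a spatial step $\Delta x>0$. Since $K$ is bounded, there is an $N^\ast\in\mathbb N$ such that whenever $N_0\ge N^\ast$ every integer $i$ with $i\Delta x\in K$ satisfies $|i|\le N_0$; in that regime, the definition of $\overline U^{(N,\Delta x)}$ (which only extends $U^{(N,\Delta x)}$ by constants beyond $\pm N_0$) gives $e_i^\ast U_t^{(N,\Delta x)}=e_i^\ast\overline U_t^{(N,\Delta x)}$ for all $t\in[0,T]$.

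For such $i$ the triangle inequality yields
\[
  |u(t,i\Delta x)-e_i^\ast U_t^{(N,\Delta x)}|
  \le |u(t,i\Delta x)-e_i^\ast U_t^{(\infty,\Delta x)}|
    +|e_i^\ast U_t^{(\infty,\Delta x)}-e_i^\ast\overline U_t^{(N,\Delta x)}|.
\]
The first term does not depend on $N$. The second term is bounded by $\|U_t^{(\infty,\Delta x)}-\overline U_t^{(N,\Delta x)}\|_\infty$, and Lemma \ref{convergence2} asserts precisely that this quantity converges to $0$ uniformly in $(t,i)$ as $N\to\infty$ for the fixed $\Delta x$. Taking the inner limit $N\to\infty$ therefore gives
\[
  \lim_{N\to\infty}\sup_{\substack{-N_0\le i\le N_0\\ i\Delta x\in K}}
  |u(t,i\Delta x)-e_i^\ast U_t^{(N,\Delta x)}|
  \le \sup_{\substack{i\in\mathbb Z\\ i\Delta x\in K}}
      |u(t,i\Delta x)-e_i^\ast U_t^{(\infty,\Delta x)}|.
\]

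It then remains to let $\Delta x\to 0$ on the right-hand side, which is exactly the convergence statement of Lemma \ref{convergence1}: uniqueness of the classical solution $u$ is assumed, so $U^{(\infty,\Delta x)}\to u$ uniformly on $[0,T]\times K$ along grid points. Composing the two limits in the order $N\to\infty$ followed by $\Delta x\to 0$ gives the claimed result.

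I do not anticipate any substantial obstacle: the proof is essentially a bookkeeping exercise combining two results already in hand, and the only subtle point is ensuring that $\Delta x$ remains fixed while $N\to\infty$, so that $U^{(\infty,\Delta x)}$ is a well-defined object to which Lemma \ref{convergence2} applies. This is handled by the order of the iterated limits in the statement.
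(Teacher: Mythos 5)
Your proposal is correct and follows essentially the same route as the paper: interpose $U_t^{(\infty,\Delta x)}$ via the triangle inequality, apply Lemma \ref{convergence2} for the inner limit $N\to\infty$ and Lemma \ref{convergence1} for the outer limit $\Delta x\to0$. Your extra remark identifying $e_i^\ast U_t^{(N,\Delta x)}$ with $e_i^\ast\overline U_t^{(N,\Delta x)}$ for the relevant indices is a small bookkeeping point the paper leaves implicit, but it changes nothing substantive.
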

\begin{proof}
	The argument follows from Lemma \ref{convergence1}, \ref{convergence2} and the triangle inequality; 
	\begin{align*}
		\sup_{\substack{i\in\mathbb Z\\i\Delta x\in K}}|u(t,i\Delta x)-e_i^\ast U_t^{(N,\Delta x)}| &\le \sup_{\substack{i\in\mathbb Z\\i\Delta x\in K}}|u(t,i\Delta x)-e_i^\ast U_t^{(\infty,\Delta x)}|+\|U_t^{(\infty,\Delta x)}-U_t^{(N,\Delta x)}\|_\infty\\
		&\to \sup_{\substack{i\in\mathbb Z\\i\Delta x\in K}}|u(t,i\Delta x)-e_i^\ast U_t^{(\infty,\Delta x)}|, \quad N\to\infty.\\
		&\to 0,\quad \Delta x\to0.
	\end{align*}
\end{proof}


\nocite{*}
\bibliographystyle{unsrt}
\bibliography{references}
\end{document}